\patchcmd\Gread@eps{\@inputcheck#1 }{\@inputcheck"#1"\relax}{}{}
\theoremstyle{plain}
\newtheorem{lemma}{Lemma}[section]
\newtheorem*{theorem*}{Theorem}
\newtheorem*{lemma*}{Lemma}
\newtheorem*{proposition*}{Proposition}
\newtheorem*{conjecture*}{Conjecture}
\newtheorem*{corollary*}{Corollary}
\newtheorem*{problem*}{Problem}
\newtheorem{theorem}[lemma]{Theorem}
\newtheorem{corollary}[lemma]{Corollary}
\newtheorem{proposition}[lemma]{Proposition}
\theoremstyle{definition}
\newtheorem{definition}[lemma]{Definition}
\newtheorem{example}[lemma]{Example}
\newtheorem{remark}[lemma]{Remark}
\newcommand{\fto}[1]{\stackrel{#1}{\to}}
\newcommand{\CC}{\mathbb{C}}
\newcommand{\OO}{\mathcal{O}}
\newcommand{\id}{\mathrm{id}}
\newcommand{\gr}{\mathrm{gr}}
\newcommand{\mumax}{\mu_{\max}}
\newcommand{\mumin}{\mu_{\min}}
\newcommand{\Quadric}{\PP^1\times\PP^1}
\newcommand{\PP}{\mathbb{P}}
\DeclareMathOperator{\Hom}{Hom}
\DeclareMathOperator{\rk}{rk}
\DeclareMathOperator{\length}{length}
\DeclareMathOperator{\coker}{coker}
\DeclareMathOperator{\Ext}{Ext}
\DeclareMathOperator{\ext}{ext}
\DeclareMathOperator{\ev}{ev}
\DeclareMathOperator{\sHom}{\mathcal{H}\kern -.5pt\mathit{om}}
\DeclareMathOperator{\sTor}{\mathcal{T}\kern -1.5pt\mathit{or}}
\newcommand{\leqor}{\underset{{\scriptscriptstyle (}-{\scriptscriptstyle )}}{<}}
\begin{document}

\author[Neelarnab Raha]{Neelarnab Raha}
\address{Department of Mathematics, The Pennsylvania State University, University Park, PA 16802}
\email{neelraha@psu.edu}

\date{\today}

\subjclass[2020]{Primary: 14J60, 14J26. Secondary: 14D20}
\keywords{Vector bundles, moduli spaces, Clifford's theorem, Brill-Noether theory, smooth quadric}

\title{Higher rank Clifford's theorem on the smooth quadric}

\begin{abstract}
	Brill-Noether theory of curves has played a crucial role in the study of curves and their moduli since the 19th century, and has been extensively studied by several authors. Clifford's theorem provides a starting point in determining the emptiness of Brill-Noether loci by providing an upper bound on $h^0(L)$ for a line bundle $L$ on a smooth curve $C$ in terms of the degree of $L$. It also characterizes the cases for which equality holds.
	
	In this paper, we prove an analogous result for higher rank sheaves on $\Quadric$. Depending on how nice the first Chern class is, and whether the sheaf has global generation properties, we prove sharp upper bounds on $h^0(E)$ for slope semistable sheaves $E$ in terms of $\rk(E)$ and $c_1(E)$. We also find that any $E$ achieving the bound is a twist of a Steiner-like bundle, or closely related to such a bundle. As part of our investigation, we show that general extensions of stable vector bundles on elliptic curves and del Pezzo surfaces are semistable.
\end{abstract}

\dedicatory{To my Dad, for his unconditional love and support}

\maketitle

\setcounter{tocdepth}{1}
\tableofcontents

\section{Introduction}

Brill-Noether theory has played a significant role in the study of curves and their moduli since the 19th century. It has been extensively studied by several authors, both for line bundles as well as higher rank vector bundles on a general curve $C$. See \cite{CoskunHuizengaNuer24, GM, Newstead} for surveys. Also see \cite{GriffithsHarris, FultonLazarsfeld, Gieseker3, EisenbudHarris}.

On the other hand, Brill-Noether theory for moduli spaces of sheaves on surfaces is not as well understood, and has been the subject of several recent studies. Some interesting questions involve determining the nonemptiness, irreducibility and dimensions of the Brill-Noether loci. See \cite{GottscheHirschowitz, CoskunHuizengaWBN, CH-BrillNoeth_Hirzebruch, CoskunHuizengaNuer24, CoskunNuerYoshioka, CoskunNuerYoshioka2, LevineZhang, GLL23, CHR25}.

Clifford's theorem (see \cite[Chapter III]{ACGH}) gives an upper bound on $h^0(L)$ for a line bundle $L$ on a smooth curve $C$ only in terms of the degree of $L$, and characterizes the cases for which equality holds. This gives one a starting point in determining emptiness of Brill-Noether loci. Coskun, Huizenga and the author (see \cite{CHR25}) have recently proved an analog of Clifford's theorem for higher rank sheaves on $\PP^2$. They obtain a sharp upper bound on $h^0(E)$ for slope-semistable sheaves on $\PP^2$ in terms of the rank and the slope of the sheaf, and show that twisted \emph{Steiner bundles} are the ones that achieve the bound. They further discuss the structure of sheaves for which $h^0$ is close to the bound, and also study the geometry of the corresponding Brill-Noether loci.

Motivated by this, we prove an analog of Clifford's theorem for higher rank sheaves on the smooth quadric surface $\Quadric\subset\PP^3$. Analogous to the case of $\PP^2$, we find that twisted \emph{Steiner-like} bundles are the ones that have the maximal possible number of sections.

The author is also working on similar questions on other surfaces like Hirzebruch surfaces and del Pezzo surfaces for the purpose of his PhD thesis.

\subsection{Upper bounds on $h^0$}

Throughout, we consider $\Quadric$ with the polarization $$H:=\OO_{\Quadric}(1,1).$$ For any torsion-free sheaf $E$ of rank $r$ on $\Quadric$ with $c_1(E)=(a,b)$, the slope of $E$ is $$\mu_H(E):=\frac{c_1(E)\cdot H}{rH^2}=\frac{a+b}{2r}.$$ More generally, the \emph{slope associated to} any given rank $r\geq1$ and first Chern class $c_1$ is $\mu:=\dfrac{c_1\cdot H}{rH^2}$.

\begin{definition}
	Let $r\geq1$ be an integer, and let $\mu\geq-1$ be a rational number. Then we define $$\alpha_{\mu}:=\lfloor \mu\rfloor + 1\quad\mbox{ and }\beta_{r,\mu}:=r\alpha_{\mu}(2\mu-\alpha_{\mu}+2).$$
\end{definition}

\subsubsection{General bound}

We have the following general upper bound on $h^0(E)$ for torsion-free sheaves on $\Quadric$. It is an analog of Clifford's theorem for line bundles on curves.

\begin{theorem}\label{SummaryThm01}
	(Theorem \ref{thm:bound_sections}.) Let $E$ be a torsion-free sheaf of rank $r\geq1$ on $\Quadric$ with $\mu_{\max}(E)\geq-1$. Then $h^0(E)\leq\beta_{r,\mumax(E)}$.
\end{theorem}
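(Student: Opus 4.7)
The plan is to reduce to the semistable case via the Harder--Narasimhan filtration and then establish the semistable bound by iteratively restricting to general smooth $(1,1)$-curves on $\Quadric$. For the HN reduction, let $0=E_0\subset E_1\subset\cdots\subset E_n=E$ be the HN filtration, with graded pieces $\gr_iE$ semistable of slopes $\mu_1>\mu_2>\cdots>\mu_n$ and ranks $r_i$, so $h^0(E)\leq\sum_i h^0(\gr_iE)$. A piece with $\mu_i<0$ contributes nothing, since any nonzero section $\OO\to\gr_iE$ would force $0=\mu(\OO)\leq\mu_i$ by semistability. For pieces with $\mu_i\geq 0$, the semistable bound (to be proved below) gives $h^0(\gr_iE)\leq r_i\alpha_{\mu_i}(2\mu_i-\alpha_{\mu_i}+2)$. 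A short check shows $\mu\mapsto\alpha_\mu(2\mu-\alpha_\mu+2)$ is non-decreasing on $[-1,\infty)$ (linear with positive slope on each interval $[k,k+1)$, with an upward jump at each integer), so every such term is bounded by $\alpha_{\mu_1}(2\mu_1-\alpha_{\mu_1}+2)$, and the total sum is bounded by $\bigl(\sum r_i\bigr)\alpha_{\mu_1}(2\mu_1-\alpha_{\mu_1}+2)\leq\beta_{r,\mu_{\max}(E)}$.

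For the semistable case with $\mu\geq 0$, set $\alpha=\alpha_\mu$ and chain together the restriction sequences
\[
0\to E(-i-1,-i-1)\to E(-i,-i)\to E(-i,-i)|_{C_{i+1}}\to 0,\qquad i=0,1,\ldots,\alpha-1,
\]
where each $C_{i+1}$ is a general smooth member of $|\OO(1,1)|$, hence isomorphic to $\PP^1$. After $\alpha$ twists, $E(-\alpha,-\alpha)$ is $\mu_H$-semistable of slope $\mu-\alpha\in[-1,0)$ and therefore has no sections. Collecting the induced long exact sequences in cohomology yields
\[
h^0(E)\leq\sum_{i=0}^{\alpha-1}h^0(E(-i,-i)|_{C_{i+1}}),
\]
and each summand is the number of sections of a rank-$r$ sheaf on $\PP^1\cong C_{i+1}$ of degree $2r(\mu-i)\geq 0$. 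Under the assumption that this restriction is \emph{balanced} (any two summand degrees differ by at most one), every summand has non-negative degree and $h^0=2r(\mu-i)+r$; the sum then evaluates in closed form to $r\alpha(2\mu-\alpha+2)=\beta_{r,\mu}$, matching the desired bound exactly.

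The principal technical obstacle is justifying balancedness at each step. For this I would invoke a Grauert--M\"ulich--Flenner type restriction theorem on $\Quadric$: if $F$ is a $\mu_H$-semistable torsion-free sheaf, then for a general $C\in|\OO(1,1)|$ the splitting type of $F|_C$ on $C\cong\PP^1$ satisfies $d_{\max}-d_{\min}\leq 1$. Since each $E(-i,-i)$ remains $\mu_H$-semistable (twisting preserves semistability) and a generic choice of curves can be made to work simultaneously for the finitely many $i$'s, such a theorem would apply at every step and the summation above would give the desired bound. Establishing this restriction theorem in the form needed on the quadric---or at least proving directly the weaker inequality that $h^0$ of each restriction does not exceed $2r(\mu-i)+r$ for $E$ semistable---is the main technical point and the place where the specific geometry of $\Quadric$ (with its two rulings and Picard rank two) must be exploited.
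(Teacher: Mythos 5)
Your reduction to the semistable case via the Harder--Narasimhan filtration is fine (the monotonicity of $\mu\mapsto\alpha_\mu(2\mu-\alpha_\mu+2)$ that you use is proved in the paper as part of Lemma \ref{lem:monotonicity_of_beta_1}), and your closed-form summation $\sum_{i=0}^{\alpha-1}\bigl(2r(\mu-i)+r\bigr)=r\alpha(2\mu-\alpha+2)$ is correct. The genuine gap is exactly the step you defer: you need, for every $\mu_H$-semistable sheaf $F$ of slope $\geq 0$ and a general $C\in|\OO(1,1)|$, that $h^1(F|_C)=0$, i.e.\ every summand of the splitting has degree $\geq -1$. Note that the ``weaker inequality'' you mention, $h^0(F|_C)\leq \deg(F|_C)+r$, is not actually weaker: on $\PP^1$ one always has $h^0\geq \deg+r$, so your inequality is equivalent to $h^1(F|_C)=0$, which is the full strength of the restriction statement. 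This cannot be obtained from the off-the-shelf Grauert--M\"ulich theorem: for the linear system $|\OO(1,1)|$ on $\Quadric$ the standard bound on consecutive gaps of the restricted Harder--Narasimhan filtration is governed by $H^2=2$, not $1$, and a gap bound of $2$ does not exclude, say, a slope-zero rank-three restriction of splitting type $(2,0,-2)$, which has $h^1\neq 0$ and would break your summation. A sharp ``gap $\leq 1$'' statement for conic restrictions on the quadric may well be true (the incidence geometry of $(1,1)$-curves is favorable), but proving it would require redoing the Grauert--M\"ulich argument for this specific family, and that is precisely the content your proposal leaves out; as written, the central estimate of the theorem is assumed rather than proved.

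It is worth contrasting this with the paper's route, which avoids any restriction theorem for semistable sheaves altogether. The paper inducts on $\alpha_{\mumax(E)}$ and on the rank, and splits into two cases: if $E$ is generically globally generated, then $E|_C$ for a general $(1,1)$-curve is a generically globally generated bundle on $\PP^1$, hence splits with all degrees $\geq 0$, giving the exact count $h^0(E|_C)=r+c_1(E)\cdot H$ with no semistability input at all; if $E$ is not generically globally generated, all sections factor through the image $S$ of the evaluation map, which has strictly smaller rank, and induction on the rank together with elementary estimates on $\beta$ closes the argument. This dichotomy is what replaces the restriction theorem you would need, and it also lets the paper treat arbitrary torsion-free sheaves with $\mumax\geq-1$ directly rather than only semistable ones. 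If you can actually establish the gap-$\leq 1$ restriction theorem on $\Quadric$, your argument would constitute a genuinely different proof; without it, the proof is incomplete at its crucial step.
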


An immediate corollary of this is the emptiness of Brill-Noether loci. Recall that the \emph{Brill-Noether locus} $B^k$ is defined to be the closure of the locus of stable sheaves having at least $k$ linearly independent global sections.

\begin{corollary}
	Let $M=M_{\Quadric,H}(r,c_1,\chi)$ be a moduli space of $H$-Gieseker semistable sheaves on $\Quadric$. The Brill-Noether locus $B^k\subset M$ is empty if $k>\beta_{r,\mu}$, where $\mu$ is the associated slope.
\end{corollary}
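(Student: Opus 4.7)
The plan is to deduce this immediately from Theorem \ref{SummaryThm01}, since essentially the only content is to translate from the theorem's hypothesis ($\mu_{\max}(E) \geq -1$) to the moduli-theoretic setting. First I would recall the standard chain of implications: $H$-Gieseker semistability implies $\mu_H$-semistability, and for any $\mu_H$-semistable sheaf $E$ the maximal slope $\mu_{\max}(E)$ of the Harder--Narasimhan filtration equals $\mu_H(E) = \mu$. Hence every sheaf $E$ represented by a point of $M$ has $\mu_{\max}(E) = \mu$.

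Next I would split on the sign of $\mu + 1$. If $\mu \geq -1$, then Theorem \ref{SummaryThm01} applies directly to every (semistable, hence torsion-free) sheaf $E$ in $M$ and gives $h^0(E) \leq \beta_{r,\mu}$. Since $B^k$ is defined as the closure of the locus of stable sheaves with $h^0 \geq k$, and no such stable sheaf can exist when $k > \beta_{r,\mu}$, the defining locus is empty and so is its closure.

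If instead $\mu < -1$, then in particular $\mu < 0 = \mu(\OO_{\Quadric})$, so any nonzero global section $\OO_{\Quadric} \hookrightarrow E$ would produce a subsheaf violating $\mu_H$-semistability of $E$. Therefore $h^0(E) = 0$ for every semistable $E$, and $B^k = \emptyset$ for every $k \geq 1$.

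There is no real obstacle here: the substance of the corollary lies entirely in Theorem \ref{SummaryThm01}, and the only care needed is to remember that we are working with Gieseker (rather than slope) semistability in the definition of $M$, which is handled by the implication recalled in the first paragraph. I would write the proof as a two-line deduction citing Theorem \ref{thm:bound_sections}.
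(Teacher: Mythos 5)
Your proposal is correct and matches the paper's intent: the corollary is stated as an immediate consequence of Theorem \ref{thm:bound_sections}, exactly the two-line deduction you describe (stable sheaves in $M$ are $\mu_H$-semistable with $\mu_{\max}=\mu$, so $h^0\leq\beta_{r,\mu}$ and the defining locus of $B^k$ is empty for $k>\beta_{r,\mu}$). Your extra case $\mu<-1$ is harmless additional care, since the paper's Definition \ref{def:alpha_beta} only defines $\beta_{r,\mu}$ for $\mu\geq-1$ anyway.
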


The bound above works well for sheaves with \emph{balanced} first Chern class, i.e., those for which $c_1=(a,b)$ with $a$ and $b$ close to each other. For example, the bound is sharp for line bundles of the form $\OO_{\Quadric}(a,a)$ and $\OO_{\Quadric}(a,a\pm1)$.

We investigate sheaves which achieve the bound above, and find that they are all globally generated vector bundles given by a specific type of resolution. Such bundles are said to be \emph{balanced twisted Steiner-like} (see Definition \ref{def:Steiner-like_sheaf}), motivated by the well-studied notion of Steiner bundles on $\PP^2$ (see \cite{Brambilla05,Brambilla08,Huizenga12}). The following result is similar to the case of $\PP^2$ (see \cite{CHR25}).

\begin{theorem}\label{SummaryThm02}
	(Theorem \ref{thm:full_structure_thm_maximal_sheaves_with_vanishing_h1}.) Let $E$ be a $\mu_H$-semistable sheaf on $\Quadric$ of rank $r\geq1$ and slope $\mu\geq0$ such that $h^0(E)=\beta_{r,\mu}$. Then $h^1(E)=h^2(E)=0$ and $c_1(E)=r(\alpha-1,\alpha-1)+(m,n)$ for some non-negative integers $m,n$, with $\alpha:=\alpha_{\mu}$ and $m+n<2r$. Moreover, $E$ is a globally generated vector bundle having a resolution $$0\to \OO(\alpha-2,\alpha-1)^m\oplus\OO(\alpha-1,\alpha-2)^n\to \OO(\alpha-1,\alpha-1)^{r+m+n}\to  E\to 0.$$
\end{theorem}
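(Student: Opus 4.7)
My plan is to establish the claimed resolution from the outside in: first verify the cohomological vanishings, then prove global generation, then identify the evaluation kernel of $E(1-\alpha, 1-\alpha)$ as a direct sum of line bundles, and finally twist back.

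For $h^2(E) = 0$, Serre duality gives $h^2(E) \cong h^0(E^\vee(-2,-2))^*$; any nonzero section there would produce a nonzero map $E \to \OO(-2,-2)$ whose image is a torsion-free quotient of slope at most $-2 < 0 \leq \mu$, violating $\mu_H$-semistability. Global generation follows from Theorem \ref{SummaryThm01} applied to subsheaves: letting $E' \subseteq E$ be the image of $\ev\colon H^0(E) \otimes \OO \to E$, every section of $E$ factors through $E'$, so $h^0(E') = h^0(E) = \beta_{r,\mu}$; if $r' := \rk(E') < r$, then $\mu_H(E') \leq \mu$ by semistability and $\mu_{\max}(E') \geq 0$ since $E'$ has sections, so Theorem \ref{SummaryThm01} gives $h^0(E') \leq \beta_{r', \mu_H(E')}$. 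A direct computation shows $\beta_{r',\cdot}$ is weakly increasing in slope and $\beta_{r',\mu} < \beta_{r,\mu}$ because $\alpha(2\mu-\alpha+2) > 0$ for $\mu \geq \alpha - 1 \geq 0$, producing $h^0(E') < h^0(E)$ and a contradiction.

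Next I would prove $h^1(E) = 0$ together with the Chern class constraint $c_1(E) = r(\alpha-1,\alpha-1) + (m,n)$ with $m, n \geq 0$ and $m + n < 2r$. The proof of Theorem \ref{SummaryThm01} is presumably by restriction-to-ruling sequences; the equality case $h^0(E) = \beta_{r,\mu}$ forces the bounds coming from restriction to generic rulings of both types to be attained simultaneously, and in particular balances $c_1(E)$. From the Riemann--Roch formula $\chi(E) = r + (a+b) + ab - c_2(E)$ on $\Quadric$ together with $h^0(E) = \beta_{r,\mu}$ and $h^2(E) = 0$, the value of $c_2(E)$ is then determined, and the same tightness forces $h^1(E) = 0$.

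To identify the resolution, I would work with $\widetilde E := E(1-\alpha, 1-\alpha)$, which is $\mu_H$-semistable of slope $(m+n)/(2r) \in [0,1)$. A direct Chern-class computation via the twist formula, using the explicit value for $c_2(E)$ from the previous step, gives $c_2(\widetilde E) = mn$, so Riemann--Roch yields $\chi(\widetilde E) = r + m + n$. Combining this with $h^2(\widetilde E) = 0$ (same Serre duality argument) and the upper bound of Theorem \ref{SummaryThm01} applied to $\widetilde E$ forces $h^0(\widetilde E) = r + m + n$ and $h^1(\widetilde E) = 0$, and the global generation argument above then applies to $\widetilde E$. Let $M$ be the evaluation kernel for $\widetilde E$; it has rank $m+n$, $c_1(M) = (-m,-n)$, $c_2(M) = mn$, and $H^0(M) = 0$. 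The key geometric step, and the main expected obstacle, is to show $M \cong \OO(-1,0)^m \oplus \OO(0,-1)^n$. My approach is to use the Beilinson spectral sequence on $\Quadric$ associated to the exceptional collection $\OO(-1,-1), \OO(0,-1), \OO(-1,0), \OO$: one computes each $H^i(M \otimes L)$ for $L$ in the dual collection using the Chern data and the vanishing statements above, reads off $M$ from the $E_1$-page, and observes that the only nonvanishing contributions produce exactly the claimed direct sum, with multiplicities $m$ and $n$ forced by $c_1(M)$. As a backup, one can restrict $M$ to general rulings of each type, use a Grauert--M\"ulich type restriction theorem to obtain balanced $\PP^1$-splittings, and globalize using the vanishing of $\Ext^1$ between the candidate line bundle summands. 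Twisting the resulting short exact sequence back by $\OO(\alpha-1, \alpha-1)$ yields the stated resolution of $E$.
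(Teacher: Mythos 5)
Your outline has the right overall shape (cohomological vanishing, global generation, Beilinson spectral sequence), but two of its load-bearing steps are asserted rather than proved, and they are precisely the hard points. First, the claim that the equality $h^0(E)=\beta_{r,\mu}$ ``forces'' $h^1(E)=0$ and balances $c_1(E)$ does not follow from tightness in the restriction argument. Equality in the restriction sequence for a $(1,1)$-curve only yields $h^1(E(-1,-1))=h^1(E)$ (together with the fact that $E(-1,-1)$ is again maximal); it does not give vanishing. The paper needs a separate Le Potier--type argument (Proposition \ref{prop:vanishing_h1_vbundles}): the maps $H^1(E(-1,-1))\to H^1(E)$ are shown to be surjective for \emph{every} member of $|\OO(1,1)|$, including the reducible ones, and these assemble into a surjection $\OO_{\PP^3}(-1)^{h^1(E(-1,-1))}\to\OO_{\PP^3}^{h^1(E)}$ of equal-rank bundles on $\PP^3=|\OO(1,1)|$, which is impossible unless $h^1(E)=0$. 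That argument in turn requires $E$ to be locally free along all such curves, which is why the paper also proves local freeness of maximal sheaves (Propositions \ref{prop:maximal_section_with_vanishing_h1_is_vbundle} and \ref{prop:maximal_section_with_gen_gg_is_vbundle}, via elementary modifications and a minimal-counterexample argument); your proposal never addresses local freeness even though ``vector bundle'' is part of the statement, and without it your later Chern-class bookkeeping is not available. Likewise, $\beta_{r,\mu}$ depends only on $a+b$, and the proof of Theorem \ref{thm:bound_sections} restricts only to $(1,1)$-curves, so no ``restriction to rulings of both types'' is present from which balancedness of $c_1$ could be read off; the paper obtains $m,n\ge 0$ and $m+n<2r$ only through the induction of Proposition \ref{prop:max-sections_are_twisted_Steiners}, twisting down by $\OO(-1,-1)$ (each twist preserving maximality) until the slope lies in $[0,1)$ and invoking generic global generation there.

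Second, your ``global generation'' argument proves only \emph{generic} global generation: bounding $h^0$ of the evaluation image $E'$ rules out $\rk(E')<r$, but says nothing about the cokernel of the evaluation map being zero rather than torsion. Yet your construction hinges on actual surjectivity: to say the evaluation kernel $M$ of $\widetilde E$ has rank $m+n$, $c_1(M)=(-m,-n)$, $c_2(M)=mn$, and to compute its Beilinson page, you need the cokernel of $\ev$ to vanish (even a zero-dimensional cokernel changes $c_2$ and the cohomology counts), and you also need $h^0(\widetilde E)=r+m+n$ and $h^1(\widetilde E)=0$, which rest on the unproved steps above. In the paper the logical order is the reverse: the Beilinson spectral sequence is applied directly to $E$ (Proposition \ref{prop:max-sections_long_resolution}), with generic global generation used only to force $h^1(E(-1,0))=h^1(E(0,-1))=0$ (a locally free cokernel cannot be nonzero torsion), and global generation then comes out of the resulting resolution rather than going in. So as planned the steps cannot be executed in your order; you would need to first establish $h^1$-vanishing, local freeness, and maximality of the twist-downs, and then run Beilinson on $\widetilde E$ itself rather than on its evaluation kernel.
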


The cohomology vanishing result above shows that certain Brill-Noether loci are either the entire moduli space, or empty. This is summarized below.

\begin{corollary}
	(Corollary \ref{cor:BN_loci}.) Suppose $M:=M_{\Quadric,H}(r,c_1,\chi)$ is a nonempty moduli space of $H$-Gieseker semistable sheaves on $\Quadric$ of rank $r\geq1$, first Chern class $c_1$ and Euler characteristic $\chi$, with associated slope $\mu\geq0$. Let $\beta:=\beta_{r,\mu}$. Then the Brill-Noether locus $B^{\beta}\subseteq M$ is nonempty if and only if $\chi=\beta$. In the case that $\chi=\beta$, the locus $B^k$ is all of $M$ if and only if $k\leq\beta$, and is empty otherwise.
\end{corollary}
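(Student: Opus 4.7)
The plan is to combine Theorems \ref{SummaryThm01} and \ref{SummaryThm02} with Serre duality and upper semicontinuity of $h^0$. I argue the first equivalence in two directions, then read off the moreover part.

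For the forward direction, suppose $B^\beta \neq \emptyset$. By upper semicontinuity of $h^0$, every point of $B^\beta$ corresponds to a semistable sheaf $E$ with $h^0(E) \geq \beta$. Since $E$ is $\mu_H$-semistable with $\mu_H(E) = \mu \geq 0 \geq -1$, one has $\mu_{\max}(E) = \mu$, so Theorem \ref{SummaryThm01} gives $h^0(E) \leq \beta_{r,\mu} = \beta$; hence $h^0(E) = \beta$. Theorem \ref{SummaryThm02} then forces $h^1(E) = h^2(E) = 0$, so $\chi = \chi(E) = h^0(E) = \beta$.

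For the reverse direction, assume $\chi = \beta$. The crucial subcomputation is that $h^2(E) = 0$ for every semistable $E \in M$. By Serre duality, $h^2(E) = \dim \Hom(E,\omega_Q)$ with $\omega_Q = \OO(-2,-2)$. Any nonzero $\phi : E \to \omega_Q$ would have image a nonzero torsion-free quotient of $E$ that also embeds in the line bundle $\omega_Q$; its slope would be at most $-2$, yet by $\mu_H$-semistability of $E$ the slope of any nonzero quotient is at least $\mu \geq 0$, a contradiction. Thus $h^2(E) = 0$ and $h^0(E) \geq \chi(E) = \beta$. Combined with the upper bound of Theorem \ref{SummaryThm01}, this gives $h^0(E) = \beta$ for every semistable $E \in M$, so the locus of stable sheaves with $h^0 \geq \beta$ is all of $M^s$, and its closure is $B^\beta = M \neq \emptyset$.

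The moreover statement is then immediate: when $\chi = \beta$, the identity $h^0(E) = \beta$ on all of $M$ shows that the locus of stable sheaves with $h^0 \geq k$ equals $M^s$ when $k \leq \beta$ and is empty when $k > \beta$, whose closures in $M$ are $M$ and $\emptyset$ respectively. The only substantive step is the $h^2$-vanishing, and this is where I anticipate the main (though still routine) work; everything else is formal bookkeeping built on the two previous theorems.
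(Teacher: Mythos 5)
Your proof is correct and follows essentially the same route as the paper: the forward direction combines the upper bound of Theorem \ref{SummaryThm01} with the cohomology vanishing for maximal sheaves (the paper cites Propositions \ref{prop:vanishing_h1_vbundles} and \ref{prop:maximal_section_with_gen_gg_is_vbundle} directly, you invoke Theorem \ref{SummaryThm02}, which is the same content), and the reverse direction uses $h^2(E)=0$ (your Serre-duality argument is exactly the one recorded in the paper's preliminaries) together with $\chi=\beta$ and the upper bound to force $h^0(E)=\beta$ on all of $M$. The appeal to semicontinuity in the forward direction is unnecessary --- nonemptiness of $B^{\beta}$ already means the defining locus of stable sheaves with $h^0\geq\beta$ is nonempty --- but this is a cosmetic difference, not a gap.
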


\subsubsection{Bound for the non-globally generated case}

If the sheaf $E$ does not have good global generation properties, the bound in Theorem \ref{SummaryThm01} can be improved.

\begin{theorem}\label{SummaryThm03}
	(Theorem \ref{thm:stratified_bound}.) Let $E$ be a torsion-free sheaf of rank $r\geq1$ on $\Quadric$ with $h^0(E)>0$. Let $S\subset E$ be the image of the canonical evaluation map $\OO_{\Quadric}\otimes H^0(E)\to E$. Let $\mu'\in\left[0,\mumax(E)\right]$ be the largest rational number that can be written with denominator belonging to the set $\{2,4,\ldots,2\rk(S)\}$. Then $$h^0(E)\leq\beta_{\rk(S),\mu'}.$$
\end{theorem}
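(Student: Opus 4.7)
The plan is to deduce this bound by applying Theorem~\ref{SummaryThm01} to the image sheaf $S$ rather than to $E$ itself, and then translating the slope data. First I would note that every section of $E$ factors through the image of the evaluation map, so $H^0(S) = H^0(E)$, and in particular $h^0(E) = h^0(S)$. Moreover, as a nonzero subsheaf of the torsion-free sheaf $E$, the sheaf $S$ is itself torsion-free of some rank $\rk(S) \geq 1$, and by construction it is globally generated.

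Next I would verify the hypothesis of Theorem~\ref{SummaryThm01} for $S$. Any torsion-free quotient of a globally generated sheaf is again globally generated, and hence has effective first Chern class (via $q$ general sections producing an injection $\OO^q \hookrightarrow Q$ with torsion cokernel, whose determinant is $\OO(D)$ with $D$ effective). Applying this to the minimal Harder-Narasimhan quotient of $S$ gives $\mu_{\min}(S) \geq 0$, so in particular $\mu_{\max}(S) \geq 0 \geq -1$. Theorem~\ref{SummaryThm01} then yields
\[
h^0(E) \;=\; h^0(S) \;\leq\; \beta_{\rk(S),\,\mu_{\max}(S)}.
\]

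The crux of the argument is then to show $\mu_{\max}(S) \leq \mu'$, which I expect to be the main technical point. Any HN subsheaf of $S$ is also a subsheaf of $E$, so $\mu_{\max}(S) \leq \mu_{\max}(E)$. On the other hand, the maximal destabilizing subsheaf of $S$ has some rank $r' \in \{1,\ldots,\rk(S)\}$ and first Chern class $(a',b')$, so $\mu_{\max}(S) = (a'+b')/(2r')$, a rational number writable with denominator in $\{2,4,\ldots,2\rk(S)\}$. Combined with $\mu_{\max}(S) \in [0,\mu_{\max}(E)]$, the maximality defining $\mu'$ forces $\mu_{\max}(S) \leq \mu'$.

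Finally I would conclude with a monotonicity check for $\beta_{r,\mu}$ in the argument $\mu$: on each unit interval $[n,n+1)$ with $n \in \NN$ one has $\alpha_\mu = n+1$, so $\beta_{r,\mu} = r(n+1)(2\mu - n + 1)$ is linear and strictly increasing in $\mu$, while at each integer the value of $\beta_{r,\mu}$ jumps upward by $r$. Hence $\beta_{\rk(S),\mu_{\max}(S)} \leq \beta_{\rk(S),\mu'}$, giving the advertised bound. The delicate point is really the denominator constraint: the bound $r' \leq \rk(S)$ on the rank of the HN maximal subsheaf is exactly what produces the improvement over a naive application of Theorem~\ref{SummaryThm01} to $E$.
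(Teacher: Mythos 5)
Your proposal is correct and takes essentially the same route as the paper's proof: apply Theorem \ref{thm:bound_sections} to the globally generated image sheaf $S$ (using $h^0(E)=h^0(S)$), observe that $\mumax(S)$ lies in $\left[0,\mumax(E)\right]$ and is a rational number with denominator in $\{2,4,\ldots,2\rk(S)\}$, so that $\mumax(S)\leq\mu'$ by the maximality defining $\mu'$, and finish by monotonicity of $\beta_{r,\mu}$ in $\mu$. The only difference is cosmetic: you reverify the nonnegativity of $\mumax(S)$ and the monotonicity of $\beta$ directly, where the paper cites Lemma \ref{lem:ggg_implies_a,b_nonnegative} and Lemma \ref{lem:monotonicity_of_beta_1}.
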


We find that $\mu_H$-semistable sheaves of small slope that achieve the above bound are extensions of sheaves without global sections by balanced twisted Steiner-like bundles.

\begin{theorem}
	(Theorem \ref{thm:non_gen_gg_maximal_small_slope_more_general}.) Let $E$ be a $\mu_H$-semistable sheaf of rank $r\geq1$ on $\Quadric$ with $\mu:=\mu_H(E)\in[0,1)$ and $h^0(E)>0$. Let $S\subset E$ be the image of the canonical evaluation map $\OO_{\Quadric}\otimes H^0(E)\to E$, and let $\mu'\in\left[0,\mu\right]$ be the largest rational number that can be written with denominator belonging to the set $\{2,4,\ldots,2\rk(S)\}$. Further suppose that $$h^0(E)=\beta_{\rk(S),\mu'}.$$ Then $S$ is a balanced twisted Steiner-like bundle of slope $\mu'$, and $E$ fits in an exact sequence of the form $$0\to S\to E\to Q\to 0$$ with $h^0(Q)=0$.
\end{theorem}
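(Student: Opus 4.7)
The plan is to combine Theorems \ref{SummaryThm01} and \ref{SummaryThm02} with an analysis of the Harder--Narasimhan filtration of $S$. By construction $S$ is globally generated, hence torsion-free with $c_1(S)$ having nonnegative components; in particular $\mu_H(S) \geq 0$, and since any nonzero global section gives an injection $\OO \hookrightarrow S$, we also have $\mumax(S) \geq 0$. The factorization $\OO \otimes H^0(E) \twoheadrightarrow S \hookrightarrow E$ of the evaluation map shows the induced $H^0(S) \hookrightarrow H^0(E)$ is an isomorphism, so $h^0(S) = h^0(E) = \beta_{\rk(S), \mu'}$.

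Next I would pin down $\mumax(S) = \mu'$ via a three-way squeeze. Theorem \ref{SummaryThm01} applied to $S$ gives $\beta_{\rk(S), \mu'} = h^0(S) \leq \beta_{\rk(S), \mumax(S)}$, and since the function $\mu \mapsto \beta_{r, \mu}$ is strictly increasing on $[0, 1)$, this forces $\mumax(S) \geq \mu'$. Conversely, $\mumax(S) \leq \mumax(E) = \mu$ by semistability of $E$, and if the maximal destabilizing subsheaf of $S$ has rank $r'$ and first Chern class $(a', b')$, then $\mumax(S) = (a' + b')/(2r')$ admits a representation with denominator $2r' \in \{2, 4, \ldots, 2\rk(S)\}$. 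The maximality of $\mu'$ in $[0, \mu]$ subject to this denominator constraint therefore forces $\mumax(S) \leq \mu'$, so $\mumax(S) = \mu'$.

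The crucial step is to show that $S$ is itself semistable. Suppose not; then $\mu' = \mumax(S) > \mu_H(S) \geq 0$, and the HN filtration $0 = S_0 \subset S_1 \subset \cdots \subset S_k = S$ has torsion-free semistable factors $F_i = S_i / S_{i-1}$ with slopes $\mu_1 = \mu' > \mu_2 > \cdots > \mu_k$. For the first factor, Theorem \ref{SummaryThm01} gives $h^0(F_1) \leq \beta_{\rk(F_1), \mu'}$. For $i \geq 2$ I split cases: if $\mu_i \geq 0$, Theorem \ref{SummaryThm01} together with the strict monotonicity $\beta_{\rk(F_i), \mu_i} < \beta_{\rk(F_i), \mu'}$ (valid since $0 \leq \mu_i < \mu' < 1$) gives $h^0(F_i) < \beta_{\rk(F_i), \mu'}$; if $\mu_i < 0$, then a nonzero section would yield $\OO \hookrightarrow F_i$ and violate semistability, so $h^0(F_i) = 0 < \beta_{\rk(F_i), \mu'}$ using $\mu' > 0$. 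Summing and using the additivity $\sum_i \beta_{\rk(F_i), \mu'} = \beta_{\rk(S), \mu'}$ yields $h^0(S) < \beta_{\rk(S), \mu'}$, contradicting our hypothesis.

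Once semistability is established, Theorem \ref{SummaryThm02} applies to $S$ with slope $\mu' \geq 0$ and $h^0(S) = \beta_{\rk(S), \mu'}$, showing that $S$ is a balanced twisted Steiner-like vector bundle and that $h^1(S) = 0$. Setting $Q := E/S$, the long exact sequence in cohomology together with $h^0(S) = h^0(E)$ and $h^1(S) = 0$ gives $H^0(Q) \hookrightarrow H^1(S) = 0$, hence $h^0(Q) = 0$. The main obstacle will be the semistability argument in paragraph three: it requires carefully tracking all HN factors of $S$, separating the degenerate regime $\mu_i < 0$ (where the Clifford-type bound does not directly apply) from the standard range, and propagating strict inequality across the sum.
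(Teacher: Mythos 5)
Your proposal is correct; every step checks out, including the delicate strictness bookkeeping in the semistability argument. It follows the same overall skeleton as the paper (identify the slope of $S$, show $S$ is $\mu_H$-semistable and maximal, invoke the structure theorem for maximal sheaves, then kill $H^0(Q)$ using $h^1(S)=0$ and $h^0(S)=h^0(E)$), but the middle is handled differently. The paper applies Corollary \ref{cor:strengthened_bound} to $S$ (legitimate because global generation gives $\mumin(S)\geq 0$ while $\mumax(S)\leq\mu<1$), which bounds $h^0(S)$ by $\beta_{\rk(S),\mu_H(S)}$ in terms of the \emph{average} slope; Lemma \ref{lem:monotonicity_of_beta_1}(iv) then forces $\mu_H(S)=\mu'$ at once, and semistability of $S$ is essentially free: any nonzero subsheaf $F\subseteq S$ is a subsheaf of the semistable $E$, so $\mu_H(F)\leq\mu$ and $\mu_H(F)$ has denominator $2\rk(F)\in\{2,\ldots,2\rk(S)\}$, whence $\mu_H(F)\leq\mu'=\mu_H(S)$ by the maximality defining $\mu'$ --- no cohomology needed. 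You instead pin down $\mumax(S)=\mu'$ via the general bound of Theorem \ref{thm:bound_sections} and then rule out instability by running strict Clifford bounds over the Harder--Narasimhan factors of $S$; this works, but it amounts to re-proving a strict form of Corollary \ref{cor:strengthened_bound} on the window $[0,1)$, so it is longer than necessary and slightly more fragile (it relies on the explicit formula $\beta_{r,\mu}=r(2\mu+1)$ on that window for strict monotonicity, plus vanishing of sections for negative-slope factors), whereas the paper's slope/denominator observation bypasses the HN analysis entirely. The final steps --- applying Theorem \ref{thm:full_structure_thm_maximal_sheaves_with_vanishing_h1} to the semistable maximal sheaf $S$ and deducing $h^0(Q)=0$ from the long exact sequence --- coincide with the paper's.
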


We illustrate the sharpness of the bound furnished by Theorem \ref{SummaryThm03} in Proposition \ref{prop:non_gen_gg_maximal_sharpness}. For this purpose, we prove a result about the semistability of extensions on del Pezzo surfaces. We also hope this result will be useful elsewhere.

\begin{theorem}
	(Theorem \ref{thm-ss_ext_elliptic}.) Let $X$ be a del Pezzo surface with anticanonical polarization, and let $C\in|-K_X|$ be a smooth elliptic curve with an arbitrary polarization. Suppose $M'=M_{X,-K_X}(r',c_1',\chi')$ and $M''=M_{X,-K_X}(r'',c_1'',\chi'')$ are moduli spaces of semistable sheaves on $X$, with $\chi''\ll0$. Let $A$ and $B$ be general elements of $M'$ and $M''$ respectively. Assume that $A$ is stable on $X$, that $$\mu(A|_C)<\mu(B|_C),$$ and that $\gcd(r',c_1'\!\cdot\!C)=\gcd(r'',c_1''\!\cdot\!C)=1$. Then for a general extension $$0\to A\to V\to B\to 0,$$ $V$ is $\mu_{-K_X}$-semistable. If furthermore $\gcd(r'+r'',(c_1'+c_1'')\!\cdot\!C)=1$, then $V$ is $\mu_{-K_X}$-stable.
\end{theorem}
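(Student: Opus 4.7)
The plan is to establish semistability of $V$ by restricting to $C$, where the theory of stable bundles on an elliptic curve gives us enough control, and then transferring the result back to $X$ using the identity $C = -K_X$.

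The transfer back is the easiest step and drives the strategy. Since the polarization is $-K_X$ and $C \in |-K_X|$, for any coherent subsheaf $F \subset V$ that is torsion-free in a neighborhood of $C$, one has $\mu_{-K_X}(F) = \deg(F|_C)/(\mathrm{rk}(F)\cdot(-K_X)^2)$, and likewise for $V$. Hence $\mu$-semistability of $V|_C$ on $C$ forces $\mu_{-K_X}$-semistability of $V$ on $X$: any $\mu_{-K_X}$-destabilizing $F \subset V$ (which we may assume reflexive, hence locally free in codimension one, in particular along general $C$) would restrict to a subsheaf of $V|_C$ with strictly larger slope. The analogous statement holds for stability. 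So it suffices to work on $C$.

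To reduce to a purely elliptic-curve problem, the first step is to check that for general $A \in M'$ and general $B \in M''$ the restrictions $A|_C$ and $B|_C$ are $\mu$-stable vector bundles on the elliptic curve $C$. For $A$, stability on $X$ together with the coprimality $\gcd(r',c_1'\!\cdot\!C)=1$ and a Mehta--Ramanathan/Flenner-type restriction theorem give that restriction to a sufficiently general smooth curve in $|-K_X|$ is semistable; coprimality then promotes semistability to stability. For $B$, the hypothesis $\chi'' \ll 0$ forces the discriminant of $B$ to be arbitrarily large, so $M''$ is nonempty, irreducible, and a general $B$ has a stable restriction to $C$ (again using $\gcd(r'',c_1''\!\cdot\!C)=1$), by combining a Bogomolov-type bound for $\mathrm{Hom}(F,B)$ along $C$ with the fact that the strictly semistable locus on $C$ has bounded dimension while $\dim M''$ grows with $-\chi''$.

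The second step is the standard extension argument on the elliptic curve $C$: given stable bundles $A|_C$ and $B|_C$ with $\mu(A|_C) < \mu(B|_C)$, a general $V|_C \in \mathrm{Ext}^1(B|_C, A|_C)$ is semistable. This is a dimension count. For each candidate destabilizing pair $(\rho,d)$ with $d/\rho > \mu(V|_C)$, a putative destabilizer $W \subset V|_C$ sits in $0\to K\to W\to W'\to 0$ with $K \subset A|_C$ and $W' \subset B|_C$; by stability of $A|_C$ and $B|_C$ on the elliptic curve and Atiyah's classification, the isomorphism classes of $(K,W')$ are parametrized by a bounded variety, and the locus in $\mathrm{Ext}^1(B|_C,A|_C)$ of extensions admitting such a lift of $W$ has dimension strictly less than $\dim\mathrm{Ext}^1(B|_C,A|_C)$ once $\mu(W) > \mu(V|_C)$. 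Union over finitely many relevant $(\rho,d)$ still gives a proper subset, so a general $V|_C$ has no destabilizer, proving semistability. Under the further hypothesis $\gcd(r'+r'',(c_1'+c_1'')\!\cdot\!C)=1$, $\mathrm{rk}$ and $\deg$ of $V|_C$ are coprime, so no subsheaf can have slope exactly $\mu(V|_C)$; semistability upgrades to stability on $C$, and by the transfer above, to stability of $V$ on $X$.

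The main obstacle is the first step: ensuring that the restriction of a generic stable bundle from $X$ to the fixed anticanonical elliptic curve $C$ is stable. Anticanonical restriction is borderline with respect to classical restriction theorems (the polarization equals the curve class), so it is not automatic and the large-$\Delta$ assumption $\chi'' \ll 0$ is crucial to produce enough parameters in $M''$ to avoid the locus where $B|_C$ fails to be stable. The corresponding statement for $A$ is easier because $A$ is stable on $X$ and the coprimality condition leaves no room for strictly semistable restrictions except on a proper closed subset of $M'$.
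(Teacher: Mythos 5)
There are two genuine gaps, both at the points you yourself flag as delicate. First, your elliptic-curve step treats $V|_C$ as a \emph{general} element of $\Ext^1_C(B|_C,A|_C)$, but $V$ is only general in $\Ext^1_X(B,A)$, and a priori the image of the restriction map $\Ext^1_X(B,A)\to\Ext^1_C(B|_C,A|_C)$ could be a small subspace contained in the locus of unstable extensions. One must prove this map is surjective; its cokernel injects into $\Ext^2_X(B,A(-C))\cong\Hom(A,B(C+K_X))^*=\Hom(A,B)^*$ by Serre duality, and since $\mu(A)<\mu(B)$ this space has no reason to vanish for a fixed $B$. This is exactly where $\chi''\ll0$ is used in the paper (Lemma \ref{lem-extend}): one kills $\Hom(A,B)$ by repeatedly replacing $B$ with general elementary modifications, then concludes for general $B\in M''$ by semicontinuity (with O'Grady's theorems supplying irreducibility of $M''$ and local freeness/stability of the general $B$). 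Your proposal never addresses this surjectivity and instead spends $\chi''\ll0$ on a different purpose.

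Second, the stability of $A|_C$ and $B|_C$ is not established by the tools you invoke. Mehta--Ramanathan and Flenner-type theorems restrict to general curves in $|k(-K_X)|$ with $k$ large (and effective versions require $k$ to grow with the discriminant), so they do not cover a curve in the polarization class itself; you concede this case is ``borderline'' but offer no substitute, and for $A$ there is no largeness hypothesis available anyway. For $B$, the heuristic that large discriminant plus a bounded strictly-semistable locus on $C$ forces stable restriction is not an argument: the dimension count needs control of the differential of the restriction-of-families map, and effective restriction theorems in fact demand \emph{larger} $k$ as the discriminant grows. The paper's replacement (Lemma \ref{lem-restrict}) needs no largeness: for stable $V$ one has $\Ext^2_X(V,V(-C))\cong\Hom(V,V)^*\cong\CC$, so by a Kodaira--Spencer computation the restricted family is a complete family of fixed-determinant sheaves on the elliptic curve $C$, where semistable bundles are dense in the irreducible stack; the coprimality hypotheses then upgrade semistability to stability. (The paper also proves the extension statement on $C$, Theorem \ref{thm-elliptic}, by the same completeness/irreducibility mechanism rather than your destabilizer dimension count, whose codimension estimate is asserted rather than proved.) Your transfer of (semi)stability from $V|_C$ back to $V$ on $X$ is fine.
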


\subsubsection{Bound for the unbalanced case}

Theorem \ref{SummaryThm02} shows that sheaves achieving the bound in Theorem \ref{SummaryThm01} are \emph{balanced} in the sense that the components of its first Chern class are close to each other. This leads us to improve the bound for sheaves that are not so balanced, but still have some good global generation properties.

\begin{theorem}
	(Theorem \ref{thm:better_bound_sections_stratified}.) Let $E$ be a torsion-free sheaf of rank $r\geq1$ on $\Quadric$ with $c_1(E)=(a,b)$. Without loss of generality, assume $b\geq a$. Suppose $E$ is generically globally generated, and $j$ is the largest integer with $$0\leq j\leq \Bigg\lfloor\frac{b-a}{r}\Bigg\rfloor$$ for which $E\!\left(0,-j\right)$ is generically globally generated. Then $$h^0(E)\leq \min\!\left\{\beta_{r,\mu''+1/2}+j(r+a),\,\, \beta_{r,\mu''}+(j+1)(r+a)\right\},$$ where $$\mu'':=\mumax(E)-\frac{(j+1)}{2}.$$
\end{theorem}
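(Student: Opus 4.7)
The plan is to iterate the restriction sequence $0\to E(0,-1)\to E\to E|_F\to 0$ for a general divisor $F$ of class $(0,1)$, bounding each contribution by $h^0(E|_F)=r+a$, and then to apply Theorem~\ref{SummaryThm01} to the remaining twist. The two bounds in the minimum correspond to stopping the iteration after $j$ or $j+1$ steps.

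The essential input I would establish first is the following lemma: if $V$ is a generically globally generated torsion-free sheaf of rank $r$ on $\Quadric$ with $c_1(V)=(a',b')$, then for a general divisor $F$ of class $(0,1)$, $V|_F$ is a globally generated vector bundle of rank $r$ and degree $a'$ on $F\cong\PP^1$; in particular $h^0(V|_F)=r+a'$. For the argument, the evaluation map $H^0(V)\otimes\OO_{\Quadric}\to V$ is surjective on a dense open $U$, and since the singular locus of $V$ has codimension at least $2$, for a general $F\subseteq U$ the restriction $V|_F$ is locally free and generated by sections lifted from $V$; a globally generated bundle of rank $r$ and degree $a'$ on $\PP^1$ splits as $\bigoplus_k\OO_F(d_k)$ with $d_k\geq 0$, giving $h^0=r+a'$. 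Combined with the restriction sequence, this yields $h^0(V)\leq h^0(V(0,-1))+r+a'$.

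To iterate, I would observe that generic global generation of $E(0,-j)$ propagates to every $E(0,-i)$ with $0\leq i\leq j$, by tensoring a generic surjection with the globally generated line bundle $\OO(0,j-i)$. Since $c_1(E(0,-i))\cdot F=a$ for every $i$, applying the lemma inductively gives $h^0(E)\leq h^0(E(0,-k))+k(r+a)$ for each $k\in\{1,\ldots,j+1\}$. I would then invoke Theorem~\ref{SummaryThm01} to bound $h^0(E(0,-k))\leq\beta_{r,\mumax(E)-k/2}$, noting that the hypothesis $\mumax\geq -1$ holds because a section of $E(0,-j)$ furnishes an injection $\OO(0,j)\hookrightarrow E$, forcing $\mumax(E)\geq j/2$ and thus $\mumax(E(0,-k))\geq -1/2$ for every $k\leq j+1$. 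Setting $k=j$ gives $\beta_{r,\mu''+1/2}+j(r+a)$ and $k=j+1$ gives $\beta_{r,\mu''}+(j+1)(r+a)$; taking the minimum yields the claimed bound. The main obstacle is the lemma above, in particular verifying that generic global generation of $V$ upgrades to \emph{full} global generation of $V|_F$ on a sufficiently general fiber.
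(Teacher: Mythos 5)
Your argument is correct and is essentially the paper's proof: twist repeatedly by $\OO(0,-1)$, use that the restriction of each $E(0,-i)$, $0\leq i\leq j$, to a general curve of class $(0,1)$ splits with nonnegative degrees so that its $h^0$ equals $r+a$, and then apply Theorem \ref{thm:bound_sections} to $E(0,-j)$ and $E(0,-j-1)$ (the paper secures $\mumax\geq-1$ via Lemma \ref{lem:ggg_implies_a,b_nonnegative} applied to $E(0,-j)$ rather than via the injection $\OO(0,j)\hookrightarrow E$ you use, but both are valid). The one obstacle you flag is not an obstacle: full global generation of $V|_F$ is not needed, since generic global generation along a general fiber already forces $V|_F\cong\bigoplus_k\OO_F(d_k)$ with all $d_k\geq0$, hence $h^1(V|_F)=0$ and $h^0(V|_F)=r+a'$, which is all the restriction-sequence argument requires.
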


This bound is sharp for \emph{all} line bundles $\OO_{\Quadric}(a,b)$ with $a,b\geq0$.

In Theorems \ref{thm:strongly_maximal_structure} and \ref{thm:strongly_maximal_structure_2}, we show that sheaves achieving this bound are often just twisted Steiner-like bundles.

\subsection*{Organization of the paper} In \S\ref{sec-prelim}, we recall some basic facts about moduli spaces of semistable sheaves on polarized varieties. In \S\ref{sec:bounds_quadric}, we prove Theorems \ref{thm:bound_sections}, \ref{thm:better_bound_sections_stratified} and \ref{thm:stratified_bound}, which provide upper bounds on $h^0(E)$ on $\Quadric$ based on the balancedness of $c_1(E)$ and global generation properties of $E$. In \S\ref{sec:maximal_balanced}, we have Theorem \ref{thm:full_structure_thm_maximal_sheaves_with_vanishing_h1} which classifies sheaves attaining the bound for $h^0$ in the balanced case. In \S\ref{sec:maximal_unbalanced}, our Theorems \ref{thm:strongly_maximal_structure} and \ref{thm:strongly_maximal_structure_2} discuss sheaves attaining the bound for $h^0$ in the unbalanced case. In \S\ref{sec:semistab_extn}, we prove the semistability of extensions on elliptic curves and del Pezzo surfaces. Finally, in \S\ref{sec:maximal_non_gg}, Theorem \ref{thm:non_gen_gg_maximal_small_slope_more_general} and Proposition \ref{prop:non_gen_gg_maximal_sharpness} discuss sheaves that achieve the bound in the non-globally generated case.

\subsection*{Acknowledgments} We would like to thank Jack Huizenga for \emph{uncountably many} discussions on this project. We also thank Izzet Coskun and Jack for allowing the author to include their arguments regarding semistability of extensions in section \ref{sec:semistab_extn}.

\section{Preliminaries}\label{sec-prelim}

In this section, we recall some facts and definitions about stability, and moduli spaces of sheaves on $\Quadric$.

\subsection{Global generation} 
For a torsion-free coherent sheaf $E$ on a smooth projective variety $X$, let $$\ev: \OO_X\otimes H^0(E)\to E$$ be the canonical evaluation morphism. We shall use the following associated terminology.
\begin{enumerate}
	\item $E$ is said to be \emph{globally generated} if $\ev$ is surjective. 
	\item $E$ is said to be \emph{generically globally generated} if the cokernel of $\ev$ is torsion.
	\item $E$ is said to be \emph{globally generated in codimension 1} if the cokernel of $\ev$ is supported on a subscheme of dimension at most $0$.
\end{enumerate}
Then we have the following straightforward implications:
$$\mbox{global generation} \Rightarrow \mbox{global generation in codimension 1} \Rightarrow \mbox{generic global generation.}$$

We also note that if $C$ is a smooth rational curve on $X$, and $E$ is locally free along $C$ and generically globally generated along $C$, i.e., $E|_C$ is a generically globally generated vector bundle on $C\cong\PP^1$, then $E|_C$ splits as a direct sum $$E|_C\cong\bigoplus_i \OO_C(d_i)$$ for some non-negative integers $d_i$. It follows that $h^1(E|_C)=0$ and $$h^0(E|_C)=\chi(E|_C)=\rk(E)+c_1(E)\cdot C$$ by Riemann-Roch.

\subsection{Slope stability}

Given a torsion-free coherent sheaf $E$ on a polarized projective variety $(X,L)$, the \emph{slope} $\mu_L(E)$ of $E$ is defined as 
$$\mu_L(E) = \frac{c_1(E)\cdot L^{n-1} }{\rk(E)L^n},$$ where $n$ denotes the dimension of $X$. We say that $E$ is \emph{$\mu_L$-(semi)stable} (or \emph{slope (semi)stable}) if for every proper subsheaf $F$ of $E$, we have $$\mu_L(F)\leqor\mu_L(E).$$
Every torsion-free sheaf $E$ admits a unique \emph{Harder-Narasimhan filtration}
$$0=E_0\subset E_1\subset\cdots\subset E_{\ell}=E$$ such that each $\gr_i := E_i/E_{i-1}$ is $\mu_L$-semistable and $$\mu(\gr_1)>\cdots>\mu(\gr_{\ell}).$$ We define $\mumax(E) := \mu(\gr_1)$ and $\mumin(E) := \mu(\gr_{\ell})$.

An important consequence of $\mu_L$-semistability is that if $E$ and $F$ are $\mu_L$-semistable sheaves with $\mu_L(E)>\mu_L(F)$, then $\Hom(E,F)=0$. In particular, if $X$ is $\Quadric$, $L$ is $\OO_{\Quadric}(1,1)$ and $E$ is a $\mu_L$-semistable sheaf on $\Quadric$ with slope $\mu_L(E)>-2$, then by Serre duality we get $$H^2(E)=\Ext^2(\OO,E)\cong\Hom(E,\OO(-2,-2))^*=0$$ since $\mu_L(\OO(-2,-2))=-2$.

\subsection{Gieseker stability}

For a pure $n$-dimensional sheaf $E$ on a polarized projective variety $(X,L)$, the \emph{Hilbert polynomial} $P_E(m)$ and the \emph{reduced Hilbert polynomial} $p_E(m)$ of $E$ are defined as $$P_E(m) := \chi(E(mL)) = a_n\frac{m^n}{n!} + \mbox{lower order terms}, \mbox{ and } p_E(m) := \frac{P_E(m)}{a_n}.$$
We say that $E$ is \emph{$L$-Gieseker (semi)stable} if for every proper subsheaf $F$ of $E$, we have $$p_F(m)\leqor p_E(m)$$ for $m\gg0$. Without any adjective, the term (semi)stability usually means Gieseker (semi)stability with respect to the polarization in question. For torsion-free sheaves on $X$, we have the implications
$$\mu_L\mbox{-stable} \Rightarrow \mbox{stable} \Rightarrow \mbox{semistable} \Rightarrow \mu_L\mbox{-semistable}.$$ 
Every pure coherent sheaf admits a unique Harder-Narasimhan filtration for Gieseker semistability. Moreover, every semistable sheaf admits a \emph{Jordan-H\"{o}lder filtration} with stable quotients, which need not be unique. However, the associated graded object is unique up to isomorphism. Two semistable sheaves are said to be \emph{S-equivalent} if their associated graded objects for the Jordan-H\"{o}lder filtration are isomorphic.

\subsection{Moduli spaces} Given a polarized projective variety $(X,L)$, there are projective moduli spaces of sheaves $M_{X,L}({\bf v})$ parameterizing $S$-equivalence classes of semistable sheaves on $X$ with a fixed Chern character ${\bf v}$ (see \cite{Gieseker,Maruyama}). The subscript in $M_{X,L}({\bf v})$ will be dropped if $X$ and $L$ are clear from the context.

Several authors have studied these moduli spaces for $X=\Quadric$ (see \cite{CH-Semistable_Hirzebruch,Rudakov-Semistab,Pedchenko}). We will write a Chern character $\bf v$ as $(r,c_1,\chi)$, where $r$ is the rank, and $\chi$ is the Euler characteristic. When nonempty, the moduli space $M:=M_{\Quadric,\OO(1,1)}(r, c_1, \chi)$ is a normal, factorial,  irreducible projective variety that is smooth along the locus $M^s$ of stable sheaves.

We let $B^k({\bf v}) \subset M_{X,L}({\bf v})$ denote the \emph{Brill-Noether locus} defined as the closure of the locus of stable sheaves $E$ on $X$ for which $h^0(E)\geq k$. The Brill-Noether loci have a natural determinantal scheme structure in the stable locus $M_{X,L}({\bf v})^s$ (see \cite{CostaMR}).

\section{The upper bounds}\label{sec:bounds_quadric}

In this section, we prove upper bounds on $h^0(E)$ for torsion-free sheaves $E$ on $\Quadric$.

For notational simplicity, we shall write $\OO_{\Quadric}(a,b)$ as $(a,b)$, for any integers $a,b$. Let $H$ denote the ample class $(1,1)$. For any torsion-free sheaf $E$ of rank $r\geq1$ on $\Quadric$ with $c_1(E)=(a,b)$, its \emph{slope} is $$\mu_H(E)=\frac{c_1(E)\cdot H}{rH^2}=\frac{a+b}{2r}.$$

We begin with an easy lemma.

\begin{lemma}\label{lem:ggg_implies_a,b_nonnegative}
	Let $E$ be a generically globally generated, torsion-free sheaf on $\Quadric$ of rank $r\geq1$ and $c_1(E)=(a,b)$. Then $a\geq0$ and $b\geq0$. In particular, $$\mumax(E)\geq\mu(E)=\frac{a+b}{2r}\geq0.$$
\end{lemma}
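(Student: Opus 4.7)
The plan is to use generic global generation to produce an injection $\OO_{\Quadric}^r \hookrightarrow E$ and then extract the effectivity of $c_1(E)$ from the resulting torsion cokernel.

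First, I would use generic global generation to choose $r$ sections $s_1,\ldots,s_r\in H^0(E)$ whose images form a $k(\eta)$-basis of the stalk $E_\eta$ at the generic point $\eta$ of $\Quadric$. This is possible because the evaluation map $\OO_{\Quadric}\otimes H^0(E)\to E$ has torsion cokernel, so it is surjective at $\eta$, and $E_\eta$ is an $r$-dimensional $k(\eta)$-vector space. The induced morphism $\varphi\colon\OO_{\Quadric}^{\,r}\to E$ is then an isomorphism at $\eta$; in particular, $\ker\varphi$ is a torsion subsheaf of $\OO_{\Quadric}^{\,r}$, hence zero. This yields a short exact sequence
$$0\to\OO_{\Quadric}^{\,r}\to E\to Q\to 0$$
with $Q$ torsion.

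In the second step I would read off $c_1(E)$ from this sequence. Since $c_1(\OO_{\Quadric}^{\,r})=0$, we have $c_1(E)=c_1(Q)$, and $c_1(Q)$ is represented by the class of the one-dimensional part of $\Supp(Q)$, counted with appropriate multiplicity. This is an effective divisor class on $\Quadric$. Intersecting with the two fiber classes $F_1=(1,0)$ and $F_2=(0,1)$, both of which are nef, gives
$$b = c_1(E)\cdot F_1 = c_1(Q)\cdot F_1 \geq 0,\qquad a = c_1(E)\cdot F_2 = c_1(Q)\cdot F_2 \geq 0.$$
Therefore $a,b\geq 0$, and consequently $\mu(E)=(a+b)/(2r)\geq 0$; since the maximal destabilizing slope only goes up, $\mumax(E)\geq\mu(E)\geq 0$ as well.

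There is really no main obstacle here: the only subtle point is justifying that the evaluation of $r$ general sections is injective, which follows from torsion-freeness of $\OO_{\Quadric}^{\,r}$ once the map is injective at the generic point. Everything else is formal.
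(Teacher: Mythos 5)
Your proof is correct, but it follows a different route from the paper. The paper argues by restriction to curves: it takes a general fiber $L$ of class $(0,1)$ avoiding the singularities of $E$ and along which $E$ is generically globally generated, notes that $E|_L$ on $L\cong\PP^1$ splits as $\bigoplus_i\OO_L(d_i)$ with all $d_i\geq0$, and reads off $a=c_1(E)\cdot L=\sum_i d_i\geq0$ (and symmetrically $b\geq0$ from a $(1,0)$-fiber). You instead extract $r$ sections whose images form a basis of the generic fiber, obtain $0\to\OO_{\Quadric}^{\,r}\to E\to Q\to 0$ with $Q$ torsion (injectivity being exactly as you say: the kernel is torsion inside the torsion-free sheaf $\OO_{\Quadric}^{\,r}$), and conclude $c_1(E)=c_1(Q)$ is an effective class, hence pairs non-negatively with the nef fiber classes; the index bookkeeping $(a,b)\cdot(1,0)=b$, $(a,b)\cdot(0,1)=a$ matches the paper's conventions. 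Both arguments are complete. Yours is arguably more portable: it shows that a generically globally generated torsion-free sheaf on any smooth projective surface has effective $c_1$, with no genericity-of-curves discussion, relying only on the standard fact that the first Chern class of a torsion sheaf on a smooth surface is the (effective) class of its divisorial support. The paper's restriction-to-$\PP^1$ technique, by contrast, is the workhorse reused throughout the paper (e.g.\ in the proofs of Theorems \ref{thm:bound_sections} and \ref{thm:better_bound_sections_stratified}, where one also needs $h^0(E|_C)=r+c_1(E)\cdot C$ and $h^1(E|_C)=0$), so the lemma's proof there doubles as a template for those later counts, which your approach does not provide.
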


\begin{proof}
	Let $L\subset\Quadric$ be a general curve of class $(0,1)$, so that $L$ does not meet any singularities of $E$ and $E$ is globally generated at a general point of $L$. Then $E|_L$ is a generically globally generated vector bundle on $L\cong \PP^1$, and it follows that $E|_L$ splits as a direct sum of line bundles $E|_L \cong \bigoplus_i  \OO_L(d_i)$ with $d_i\geq 0$ for each $i$. Then $a=c_1(E)\cdot L=\deg(E|_L)=\sum_i d_i\geq0$. Similarly, using a general curve $C$ of class $(1,0)$ we get $b\geq0$.
\end{proof}

We now introduce some notation that is going to be very useful throughout this paper.

\begin{definition}\label{def:alpha_beta}
	Let $r\geq1$ be an integer, and let $\mu\geq-1$ be a rational number. Then we define $$\alpha_{\mu}:=\lfloor \mu\rfloor + 1\quad\mbox{ and }\quad\beta_{r,\mu}:=r\alpha_{\mu}\left(2\mu-\alpha_{\mu}+2\right).$$
\end{definition}

Since $\mu\geq\alpha_{\mu}-1$, we note that $\beta_{r,\mu}\geq r\alpha_{\mu}^2\geq0$.

\subsection{General bound}

We can now state and prove the first bound on $h^0$, which turns out to be good when the components of the first Chern class are close to each other.

\begin{theorem}\label{thm:bound_sections}
	Let $E$ be a torsion-free sheaf of rank $r\geq1$ on $\Quadric$ with $\mu_{\max}(E)\geq-1$. Then $$h^0(E)\leq\beta_{r,\mu_{\max}(E)}.$$
\end{theorem}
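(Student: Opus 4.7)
The plan is to first reduce to the $\mu_H$-semistable case via the Harder--Narasimhan filtration, and then prove the semistable case by induction on $\alpha := \alpha_\mu$ where $\mu := \mu_{\max}(E)$.

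For the reduction, write the HN filtration $0 = E_0 \subset \cdots \subset E_\ell = E$ with $\mu_H$-semistable graded pieces $\gr_i$ of rank $r_i$ and slope $\mu_i$, so that $\mu_1 = \mu$. The pieces with $\mu_i < 0$ contribute no global sections, since any nonzero map $\OO \hookrightarrow \gr_i$ would give a subsheaf of slope $0 > \mu_i$, contradicting semistability. Granting the theorem in the semistable case, $h^0(\gr_i) \leq \beta_{r_i, \mu_i}$ for the remaining pieces. Direct inspection of the formula shows that $\beta_{r, \nu}$ is linear in $r$ and nondecreasing in $\nu \geq -1$ (it has slope $2 r \alpha_\nu \geq 0$ on each interval $[\alpha - 1, \alpha)$ and a positive jump of $r$ at each integer), so
\[ h^0(E) \leq \sum_i h^0(\gr_i) \leq \sum_{\mu_i \geq 0} \beta_{r_i, \mu_i} \leq \sum_{\mu_i \geq 0} \beta_{r_i, \mu} \leq \beta_{r, \mu}. \]

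For the semistable case, I would induct on $\alpha \geq 0$. The base case $\alpha = 0$ (i.e., $-1 \leq \mu < 0$) is immediate: $\Hom(\OO, E) = 0$ by semistability, so $h^0(E) = 0 = \beta_{r, \mu}$. For the inductive step $\alpha \geq 1$ (so $\mu \geq 0$), pick a general smooth conic $C \in |H|$, which is isomorphic to $\PP^1$. The restriction sequence
\[ 0 \to E(-1, -1) \to E \to E|_C \to 0 \]
gives $h^0(E) \leq h^0(E(-1, -1)) + h^0(E|_C)$. Since $E(-1, -1)$ is $\mu_H$-semistable of slope $\mu - 1 \geq -1$ with $\alpha_{\mu - 1} = \alpha - 1$, the inductive hypothesis yields $h^0(E(-1, -1)) \leq \beta_{r, \mu - 1} = r(\alpha - 1)(2\mu - \alpha + 1)$. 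A Grauert--M\"ulich-type restriction theorem (applicable to $\Quadric$ with polarization $H$, along the lines of Rudakov's study of semistable sheaves on the quadric, or via Flenner's restriction theorem) implies that for general $C$ the restriction $E|_C$ is semistable on $C \cong \PP^1$ of slope $2\mu \geq 0$; hence each line bundle summand of $E|_C$ has nonnegative degree, so $h^1(E|_C) = 0$ and $h^0(E|_C) = \chi(E|_C) = r(2\mu + 1)$. Combining,
\[ h^0(E) \leq r(\alpha - 1)(2\mu - \alpha + 1) + r(2\mu + 1) = r\alpha(2\mu - \alpha + 2) = \beta_{r, \mu}. \]

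The main obstacle is the restriction input: one needs $h^1(E|_C) = 0$ (equivalently $\mu_{\min}(E|_C) \geq -1$) for general $C \in |H|$ whenever $E$ is $\mu_H$-semistable of nonnegative slope. Since $|H|$ is very ample and base-point-free on $\Quadric$, a Grauert--M\"ulich-type argument analogous to the one on $\PP^n$ should deliver full semistability of the restriction, which is more than enough. If direct restriction to $(1, 1)$-conics turns out to be delicate on its own, one can iterate restrictions to generic $(1, 0)$- and $(0, 1)$-curves, decreasing the slope in half-integer steps, at the cost of more intricate bookkeeping on the intervals defining $\alpha_\mu$.
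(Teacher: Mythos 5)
Your reduction to the semistable case via the Harder--Narasimhan filtration is fine (it is essentially the paper's Corollary \ref{cor:strengthened_bound} run in reverse, using the monotonicity of $\beta$ in both arguments), and the base case and the numerics of the inductive step are correct. The gap is the restriction input. There is no Grauert--M\"ulich or Flenner-type theorem that gives semistability of $E|_C$ for a general $C\in|H|$ on $\Quadric$: Flenner/Mehta--Ramanathan give semistability of restrictions only to curves in $|aH|$ with $a\gg0$ (depending on the rank and discriminant), while Grauert--M\"ulich for a very ample pencil only bounds the consecutive gaps in the Harder--Narasimhan filtration of $E|_C$ (here by $H^2=2$), which yields only $\mumin(E|_C)\geq 2\mu-2(r-1)$ --- far below $-1$ when $\mu$ is small and $r$ is large, so $h^1(E|_C)=0$ does not follow. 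Worse, the statement you invoke is often numerically impossible: a rank $2$ semistable sheaf with $c_1=(1,0)$ restricts to a conic with odd degree, so $E|_C$ is never semistable. The fallback of restricting to $(1,0)$- and $(0,1)$-rulings has the same defect: nothing forces $h^1$ of those restrictions to vanish either. So as written the inductive step does not close.

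The paper circumvents exactly this point without any restriction theorem, by a dichotomy on generic global generation combined with a second induction on the rank (and it does so for arbitrary torsion-free $E$, not just semistable ones, working with $\mumax$). If $E$ is generically globally generated, then for a general smooth rational $C\in|H|$ the restriction $E|_C$ is a generically globally generated bundle on $\PP^1$, hence splits as $\bigoplus_i\OO_C(d_i)$ with all $d_i\geq0$; this gives $h^1(E|_C)=0$ and $h^0(E|_C)=r+c_1(E)\cdot H$ for free, and the restriction sequence plus the inductive bound $h^0(E(-1,-1))\leq\beta_{r,\mumax-1}$ and the inequality $c_1(E)\cdot H\leq 2r\,\mumax(E)$ finish that case. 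If $E$ is not generically globally generated, then $h^0(E)=h^0(S)$ where $S$ is the image of the evaluation map $\OO\otimes H^0(E)\to E$; $S$ is globally generated (so $\mumax(S)\geq0$ by Lemma \ref{lem:ggg_implies_a,b_nonnegative}), satisfies $\mumax(S)\leq\mumax(E)$, and has rank strictly less than $r$, so induction on the rank together with the elementary comparison of $\beta_{r',\mu'}$ with $\beta_{r,\mu}$ (split according to whether $\alpha_{\mu'}=\alpha_\mu$ or $\alpha_{\mu'}<\alpha_\mu$) gives the bound. To repair your argument you would need to replace the appeal to a restriction theorem by this (or an equivalent) mechanism for producing the vanishing $h^1(E|_C)=0$.
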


\begin{proof}
	For notational simplicity, let $\mu:=\mumax(E)$ and $\alpha:=\alpha_{\mu}$. We shall induct on both $\alpha$ and $r$.
	
	For the base case, if $\alpha=0$, then $\mu<0$. This means that $$h^0(E)\leq\sum_i h^0(\gr_i)=0=\beta_{r,\mu},$$ where the $\gr_i$'s are the Harder-Narasimhan factors of $E$, and are therefore $\mu_H$-semistable. Next, if $r=1$, then $E\cong I_Z(a,b)$ for some integers $a,b$ and some $0$-dimensional closed subscheme $Z$ of $\Quadric$. So $$h^0(E)\leq h^0(\OO(a,b))=\begin{cases}
		0&\mbox{ if }a<0\mbox{ or }b<0\\
		(a+1)(b+1)&\mbox{ if }a\geq0\mbox{ and }b\geq0,
	\end{cases}$$ while $$0\leq\beta_{1,\mu}=\alpha(2\mu-\alpha+2)=\begin{cases}
	\left(\dfrac{a+b}{2}+1\right)^{\!2}&\mbox{ if }a+b\mbox{ is even}\\[15pt]
	\left(\dfrac{a+b}{2}+\dfrac{1}{2}\right)\!\left(\dfrac{a+b}{2}+\dfrac{3}{2}\right)&\mbox{ if }a+b\mbox{ is odd}.
\end{cases}$$ It is now straightforward to see that $h^0(\OO(a,b))\leq\beta_{1,\mu}$.
	
	We now assume $\alpha>0$ and $r>1$. Then $\beta_{r,\mu}>0$, and we assume that $E$ has a non-zero global section, as otherwise we are done.
	
	There are two cases to consider based on whether $E$ is generically globally generated.
	
	\emph{Case 1:} Suppose $E$ is generically globally generated. Choose a general curve $C$ of class $H=(1,1)$, so that $C$ is a smooth irreducible rational curve avoiding any singularities of $E$, and $E$ is globally generated at a general point of $C$. Then $E|_C$ is a generically globally generated vector bundle on $C\cong\PP^1$, so $E|_C$ splits as a direct sum of line bundles $E|_C \cong \bigoplus_i  \OO_C(d_i)$ with each $d_i\geq 0$. Write $c_1(E)=(a,b)$. Since $C$ is of class $H$, $a+b = c_1(E)\cdot H=\deg(E|_C) = \sum_i d_i$.  This gives $h^0(E|_C) = \sum_i (d_i+1) = r+a+b=r+c_1$ where $c_1$ denotes $c_1(E)\cdot H$. The restriction exact sequence $$0\to E(-1,-1)\to E\to E|_C\to 0$$ then yields \begin{align*}
		h^0(E)&\leq h^0(E(-1,-1))+h^0(E|_C)\\[5pt]
		&\leq\beta_{r,\mu-1} + r+c_1\qquad\quad\quad\qquad\quad\,\,\,\,\left(\mbox{since }\mumax(E(-1,-1))=\mu-1\right)\\[5pt]
		&=\beta_{r,\mu}-2r\mu+c_1\\[5pt]
		&\leq \beta_{r,\mu}
	\end{align*} by induction on $\alpha$, and the fact that $$\frac{c_1}{2r}=\mu(E)\leq\mumax(E)=\mu.$$
	
	\emph{Case 2:} Suppose $E$ is not generically globally generated. Let $S\subset E$ be the image of the canonical evaluation map $\OO_{\Quadric}\otimes H^0(E)\to E$, and let $r'$ be its rank. Let $\mu':=\mumax(S)$ and $\alpha':=\lfloor\mu'\rfloor+1$. Note that $\mu'\leq\mu$. Thus, $\alpha'\leq\alpha$. Since $S$ is globally generated, $\mu'\geq0>-1$ by Lemma \ref{lem:ggg_implies_a,b_nonnegative} above. Because $E$ is not generically globally generated, $r'<r$. So the asserted bound holds for $S$ by induction on $r$.
	
	If $\alpha'=\alpha$, then \begin{align*}
		h^0(E)&=h^0(S)\\
		&\leq\beta_{r',\mu'}\\
		&=r'\alpha\left(2\mu'-\alpha+2\right)\\
		&<r\alpha\left(2\mu-\alpha+2\right)\\
		&=\beta_{r,\mu}.
	\end{align*}
	
	On the other hand, suppose $\alpha'<\alpha$. Then we have \begin{align*}
		h^0(E)&=h^0(S)\\
		&\leq\beta_{r',\mu'}\\
		&=r'\alpha'\left(2\mu'-\alpha'+2\right)\\
		&<r'\alpha'\left(2\alpha'-\alpha'+2\right)\qquad\qquad\qquad\,\,\,\,\,\,(\mbox{since }\mu'<\alpha')\\
		&=r'\alpha'\left(\alpha'+2\right)\\
		&\leq(r-1)(\alpha-1)\left(\alpha+1\right)\qquad\quad\qquad\,\,\,\,(\mbox{since }r'\leq r-1\mbox{ and }\alpha'\leq\alpha-1)\\
		&=(r-1)(\alpha^2-1)\\
		&<r\alpha^2\\
		&=r\alpha\left(2(\alpha-1)-\alpha+2\right)\\
		&\leq\beta_{r,\mu}\qquad\qquad\qquad\qquad\qquad\qquad\,\,\quad\,\,(\mbox{since }\alpha-1\leq\mu).
	\end{align*}
\end{proof}

We can improve the above bound in case $E$ is fairly close to being $\mu_H$-semistable.

\begin{corollary}\label{cor:strengthened_bound}
	Let $E$ be a torsion-free sheaf of rank $r\geq1$ on $\Quadric$ with $\mu:=\mu(E)\geq-1$ and $$\alpha-1\leq\mumin(E)\leq\mumax(E)<\alpha,$$ where $\alpha:=\alpha_{\mu}$. Then $$h^0(E)\leq\beta_{r,\mu}.$$
\end{corollary}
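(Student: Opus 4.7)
The strategy is a straightforward Harder-Narasimhan reduction that exploits the fact that, under the given hypothesis, all HN slopes live inside a single unit interval $[\alpha-1,\alpha)$, on which $\beta_{r,\mu}$ is linear in $\mu$ and hence \emph{additive} over the HN decomposition. Since $\mu_{\max}(E)<\alpha$ in general gives only the weaker bound $\beta_{r,\mu_{\max}(E)}$ from Theorem \ref{thm:bound_sections}, the content of this corollary lies in averaging this bound across the HN factors.

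Concretely, let $0=E_0\subset E_1\subset\cdots\subset E_\ell=E$ be the HN filtration with respect to $\mu_H$, write $\gr_i:=E_i/E_{i-1}$, and put $r_i:=\rk(\gr_i)$ and $\mu_i:=\mu_H(\gr_i)$. By hypothesis $\alpha-1\leq\mumin(E)=\mu_\ell$ and $\mu_1=\mumax(E)<\alpha$, so every $\mu_i$ lies in $[\alpha-1,\alpha)$ and in particular $\alpha_{\mu_i}=\alpha$. Since $\mu\geq -1$, we have $\alpha\geq 0$, so $\mu_i\geq\alpha-1\geq -1$, and Theorem \ref{thm:bound_sections} applies to each $\gr_i$, which is $\mu_H$-semistable, giving
\[
h^0(\gr_i)\ \leq\ \beta_{r_i,\mu_i}\ =\ r_i\alpha\bigl(2\mu_i-\alpha+2\bigr).
\]

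Then I would sum using left-exactness of $H^0$: $h^0(E)\leq\sum_i h^0(\gr_i)$. Using $\sum_i r_i=r$ and $\sum_i r_i\mu_i = r\mu$ (a restatement of additivity of rank and first Chern class in the HN filtration), the right-hand side telescopes as
\[
\sum_i r_i\alpha(2\mu_i-\alpha+2)\ =\ \alpha\Bigl(2\sum_i r_i\mu_i+(2-\alpha)\sum_i r_i\Bigr)\ =\ r\alpha(2\mu-\alpha+2)\ =\ \beta_{r,\mu},
\]
which is exactly the desired bound.

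There is no real obstacle; the only thing to check carefully is that the common value $\alpha_{\mu_i}=\alpha$ is forced by the two-sided pinching $\alpha-1\leq\mumin(E)\leq\mumax(E)<\alpha$, which is precisely what makes $\beta_{r,\cdot}$ behave linearly (rather than jumping at integer thresholds) across the HN factors. This linearity is what upgrades the naive bound $\beta_{r,\mumax(E)}$ to the sharper bound $\beta_{r,\mu}$.
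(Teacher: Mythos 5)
Your proof is correct and is essentially identical to the paper's: both decompose $E$ into its Harder--Narasimhan factors, observe that the pinching hypothesis forces $\alpha_{\mu_i}=\alpha$ for every factor, apply Theorem \ref{thm:bound_sections} to each semistable piece, and sum using $\sum_i r_i=r$ and $\sum_i r_i\mu_i=r\mu$. No differences worth noting.
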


\begin{proof}
	Let $\gr_1,\ldots,\gr_{\ell}$ be the Harder-Narasimhan factors of $E$. Let $r_i:=\rk(\gr_i)$ and $\mu_i:=\mu(\gr_i)$ for each $i$. Note that each $\alpha_{\mu_i}$ equals $\alpha$. Since each $\gr_i$ is $\mu_H$-semistable and $\mu_i\geq\mumin(E)\geq\alpha-1\geq-1$, Theorem \ref{thm:bound_sections} implies that $$h^0(\gr_i)\leq\beta_{r_i,\mu_i}=r_i\alpha\left(2\mu_i-\alpha+2\right)$$  for each $i$. It follows that \begin{align*}
		h^0(E)&\leq\sum_i h^0(\gr_i)\\
		&\leq\sum_i r_i\alpha\left(2\mu_i-\alpha+2\right)\\
		&=r\alpha\left(2\mu-\alpha+2\right)\\
		&=\beta_{r,\mu},
	\end{align*} since $r=\sum_i r_i$ and $$r\mu=\frac{c_1(E)\cdot H}{2}=\sum_i\frac{c_1(\gr_i)\cdot H}{2}=\sum_i r_i\mu_i.$$
\end{proof}

The bound in Theorem \ref{thm:bound_sections} works well for \emph{balanced} sheaves (i.e., sheaves whose first Chern class' components are close to each other), as illustrated by the following remark.

\begin{remark}\label{rmk:maximality_of_balanced_direct_sum_bundles}
	Note that $$\mumax\!\left(\OO(m,m)^{\oplus r}\right)=\mu_H\!\left(\OO(m,m)^{\oplus r}\right)=m$$ and $$\beta_{r,m}=r(m+1)^2=h^0\!\left(\OO(m,m)^{\oplus r}\right)$$ for integers $m\geq-1$ and $r\geq1$. Also, if $m,n\geq-1$ are integers with $|m-n|=1$, we see that $$\mu:=\mumax\!\left(\OO(m,n)^{\oplus r}\right)=\mu_H\!\left(\OO(m,n)^{\oplus r}\right)=(m+n)/2$$ and $$\beta_{r,\mu}=r(m+1)(n+1)=h^0\!\left(\OO(m,n)^{\oplus r}\right)$$ since $\lfloor\mu\rfloor=\min\{m,n\}$. Thus, copies of balanced line bundles achieve the bound in Theorem \ref{thm:bound_sections}. On the other hand, examples like $\OO(0,2)$ show that if the gap between $m$ and $n$ is larger than $1$, then $h^0(\OO(m,n))$ need not achieve the bound. This leads us to consider improving the bound from Theorem \ref{thm:bound_sections} for the case of unbalanced sheaves.
\end{remark}

\subsection{Bound for the unbalanced case}

In this subsection, we prove a better bound in the case that $c_1(E)=(a,b)$ with $a$ and $b$ not being very close to each other. The main idea is that we can twist down one of the factors till we get closer to being balanced.

\begin{theorem}\label{thm:better_bound_sections_stratified}
	Let $E$ be a torsion-free sheaf of rank $r\geq1$ on $\Quadric$ with $c_1(E)=(a,b)$. Without loss of generality, assume $b\geq a$. Suppose $E$ is generically globally generated, and $j$ is the largest integer with $$0\leq j\leq \Bigg\lfloor\frac{b-a}{r}\Bigg\rfloor$$ for which $E\!\left(0,-j\right)$ is generically globally generated. Then $$h^0(E)\leq \min\!\left\{\beta_{r,\mu''+1/2}+j(r+a),\,\, \beta_{r,\mu''}+(j+1)(r+a)\right\},$$ where $$\mu'':=\mumax(E)-\frac{(j+1)}{2}.$$
\end{theorem}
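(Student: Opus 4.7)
The plan is to iterate the restriction sequence for curves of class $(0,1)$ and then apply the general bound in Theorem~\ref{thm:bound_sections} to a suitable twist of $E$.

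First, I would fix a general smooth curve $L \subset \Quadric$ of class $(0,1)$, so that $L \cong \PP^1$, $L$ avoids the singular locus of $E$, and the evaluation map from $H^0(E)$ to $E$ remains generically surjective on $L$. The key computation is that $\OO(0,-i)|_L \cong \OO_L$ because $(0,-i)\cdot(0,1)=0$, so $E(0,-i)|_L \cong E|_L$ for every $i \geq 0$. Since $E|_L$ is a generically globally generated vector bundle on $\PP^1$, it splits as $\bigoplus_i \OO_L(d_i)$ with $d_i \geq 0$, $\sum d_i = a$, and $h^0(E|_L) = r+a$. The restriction sequence
$$0 \to E(0,-(i+1)) \to E(0,-i) \to E|_L \to 0$$
then yields $h^0(E(0,-i)) \leq h^0(E(0,-(i+1))) + (r+a)$ for every $i \geq 0$, and iterating $k$ times gives
$$h^0(E) \leq h^0(E(0,-k)) + k(r+a)$$
for every $k \geq 0$.

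Next, I would apply Theorem~\ref{thm:bound_sections} to $E(0,-k)$ for $k = j$ and $k = j+1$. Twisting shifts every Harder-Narasimhan slope uniformly, so $\mumax(E(0,-k)) = \mumax(E) - k/2$; this equals $\mu'' + \tfrac{1}{2}$ when $k=j$ and equals $\mu''$ when $k=j+1$. Hence Theorem~\ref{thm:bound_sections} furnishes $h^0(E(0,-j)) \leq \beta_{r,\mu''+1/2}$ and $h^0(E(0,-(j+1))) \leq \beta_{r,\mu''}$; combining with the iterated bound above produces both terms appearing in the minimum.

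The only step requiring verification is that Theorem~\ref{thm:bound_sections} is actually applicable, i.e., that the relevant slopes are at least $-1$. From $\mumax(E) \geq \mu(E) = (a+b)/(2r)$ and $j+1 \leq (b-a)/r + 1$, a short computation gives $\mu'' \geq a/r - 1/2$, and $a \geq 0$ by Lemma~\ref{lem:ggg_implies_a,b_nonnegative} then shows $\mu'' \geq -1/2 > -1$. I do not anticipate any substantive obstacle beyond this minor verification; the proof reduces to a direct iteration of the restriction sequence followed by Theorem~\ref{thm:bound_sections}. I note in passing that the hypothesis $E(0,-j)$ is generically globally generated is not used in this bound itself, but should be relevant for characterizing the sheaves achieving the bound in subsequent structure theorems.
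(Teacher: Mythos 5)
Your proof is correct and takes essentially the same approach as the paper: iterate the restriction sequence along a general $(0,1)$-line, where every twist $E(0,-i)$ restricts to $E|_L$ with $h^0=r+a$, and then apply Theorem \ref{thm:bound_sections} to $E(0,-j)$ and $E(0,-j-1)$. The only cosmetic difference is that you verify $\mu''\geq -1/2$ directly from $a\geq 0$ and $j\leq\lfloor (b-a)/r\rfloor$, whereas the paper applies Lemma \ref{lem:ggg_implies_a,b_nonnegative} to $E(0,-j)$; your side remark that the generic global generation of $E(0,-j)$ is not needed for the inequality itself (only for the later structure results) is accurate.
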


\begin{proof}
	Let $L$ be a general line of class $(0,1)$ such that $$E\!\left(0,-j\right)$$ is globally generated at a general point of $L$, and none of its singularities are on $L$. Then each of $$E\!\left(0,-j+1\right),E\!\left(0,-j+2\right),\ldots,E$$ also has the same properties with respect to $L$. Moreover, for each of these, the restriction to $L$ has $h^0$ equal to $r+a$ by Riemann-Roch. Also note that $$\mumax\!\left(E\!\left(0,-j\right)\right)=\mumax(E)-\frac{j}{2}=\mu''+\frac{1}{2}$$ and $$\mumax\!\left(E\!\left(0,-j-1\right)\right)=\mumax(E)-\frac{(j+1)}{2}=\mu''\geq-1,$$ where the last inequality is due to Lemma \ref{lem:ggg_implies_a,b_nonnegative} being applied on $E\!\left(0,-j\right)$. Thus, Theorem \ref{thm:bound_sections} yields the inequalities $$h^0\!\left(E\!\left(0,-j\right)\right)\leq\beta_{r,\mu''+1/2}$$ and $$h^0\!\left(E\!\left(0,-j-1\right)\right)\leq\beta_{r,\mu''}.$$ Now, using twists of the restriction sequence $$0\to E(0,-1)\to E\to E|_L\to 0$$ repeatedly, we get \begin{align*}
		h^0(E)&\leq h^0(E(0,-1))+r+a\\
		&\leq h^0(E(0,-2))+2(r+a)\\
		&\leq\cdots\\
		&\leq h^0\!\left(E\!\left(0,-j\right)\right)+j(r+a)\\
		&\leq h^0\!\left(E\!\left(0,-j-1\right)\right)+(j+1)(r+a).\end{align*} The assertion follows.
\end{proof}

\begin{remark}
	Unlike the bound from Theorem \ref{thm:bound_sections}, the bound from Theorem \ref{thm:better_bound_sections_stratified} is actually sharp for copies $\OO(m,n)^{\oplus r}$ of all line bundles with $m,n\geq0$ (cf. Remark \ref{rmk:maximality_of_balanced_direct_sum_bundles}). To see this, let $0\leq m\leq n$ be integers and $E$ be the rank $r$ $\mu_H$-semistable vector bundle $\OO(m,n)^{\oplus r}$. Since $$E\!\left(0,-\Bigg\lfloor\frac{rn-rm}{r}\Bigg\rfloor\right)=\OO(m,m)^{\oplus r}$$ is globally generated, we have $$j=\Bigg\lfloor\frac{rn-rm}{r}\Bigg\rfloor=n-m$$ and $$\mu''=\frac{rm+rn}{2r}-\frac{(n-m+1)}{2}=m-\frac{1}{2}.$$ So $$\beta_{r,\mu''+1/2}+j(r+rm)=r(m+1)(2m-m-1+2)+(n-m)(r+rm)=r(m+1)(n+1)$$ and $$\beta_{r,\mu''}+(j+1)(r+rm)=rm(2m-1-m+2)+(n-m+1)(r+rm)=r(m+1)(n+1).$$ Thus, the bound furnished by the above result exactly equals $h^0(E)$.
\end{remark}

We now observe that usually the more we can twist, the better the bound we get from Theorem \ref{thm:better_bound_sections_stratified} above. We introduce the following notation. Given non-negative integers $a\leq b$ and a positive integer $r$, let $\mu:=\frac{a+b}{2r}$. For any integer $$0\leq\ell\leq \Bigg\lfloor\frac{b-a}{r}\Bigg\rfloor,$$ we define $$\mu''_{\ell}:=\mu-\frac{(\ell+1)}{2}$$ and $$\theta_{\ell}:=\beta_{r,\mu''_{\ell}}+(\ell+1)(r+a).$$

Suppose $E$ is a generically globally generated $\mu_H$-semistable sheaf of rank $r$ on $\Quadric$ with $c_1(E)=(a,b)$, and let $j$ be the largest integer with $$0\leq j\leq \Bigg\lfloor\frac{b-a}{r}\Bigg\rfloor$$ for which $E\!\left(0,-j\right)$ is generically globally generated. Then Theorem \ref{thm:better_bound_sections_stratified} says that $$h^0(E)\leq\min\left\{\theta_{j-1},\,\, \theta_j\right\}.$$ The following numerical result shows that in Theorem \ref{thm:better_bound_sections_stratified}, the higher $j$ is, the smaller the upper bound obtained.

\begin{proposition}\label{prop:monotonicity_of_theta}
	Fix non-negative integers $a\leq b$ and a positive integer $r$. Let $\mu:=\dfrac{a+b}{2r}$. Let $$0\leq\ell\leq \Bigg\lfloor\frac{b-a}{r}\Bigg\rfloor$$ be an integer. Then $\theta_{\ell}<\theta_{\ell-1}$, unless $$\ell=\Bigg\lfloor\frac{b-a}{r}\Bigg\rfloor,$$ or $$\ell=\Bigg\lfloor\frac{b-a}{r}\Bigg\rfloor-1\quad\mbox{ and }\quad\lfloor\mu''_{\ell-1}\rfloor=\lfloor\mu''_{\ell}\rfloor.$$
\end{proposition}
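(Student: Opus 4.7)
The plan is to reduce the inequality to an explicit algebraic identity for $\theta_{\ell-1} - \theta_\ell$, and then carry out a finite case analysis. To begin, I would set $\nu := \mu''_\ell$ so that $\mu''_{\ell-1} = \nu + \tfrac{1}{2}$, and write $\alpha := \alpha_\nu = \lfloor \nu \rfloor + 1$ and $f := \nu - \lfloor \nu \rfloor$. A routine calculation from the definition $\beta_{r,\mu} = r\alpha_\mu(2\mu - \alpha_\mu + 2)$ gives, depending on whether $\alpha_{\nu + 1/2}$ equals $\alpha$ or $\alpha + 1$:
\[
\theta_{\ell-1} - \theta_\ell \;=\; \beta_{r,\nu + 1/2} - \beta_{r,\nu} - (r+a)
\;=\;
\begin{cases}
r(\alpha - 1) - a & \text{if } f < \tfrac{1}{2}, \\
r\alpha + 2rf - r - a & \text{if } f \geq \tfrac{1}{2}.
\end{cases}
\]
The first of these two cases is exactly the condition $\lfloor \mu''_{\ell-1} \rfloor = \lfloor \mu''_\ell \rfloor$ appearing in the ``unless'' clause.

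To exploit this, I would parameterize $\nu$ concretely. Writing $a = qr + t$ and $b - a = kr + s$ with $0 \leq t, s < r$ (where $k = \lfloor (b-a)/r \rfloor$),
\[
\nu \;=\; q + \frac{k - \ell - 1}{2} + \frac{2t + s}{2r}.
\]
Since $0 \leq 2t + s < 3r$, the values of $\lfloor \nu \rfloor$ and $f$ (and hence which case above applies) are determined by (i) the parity of $k - \ell - 1$ and (ii) which of the three subintervals $[0,r)$, $[r,2r)$, $[2r, 3r)$ contains $2t + s$. This gives six subcases to examine.

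In each subcase I would substitute the resulting $\alpha$ and $f$ into the displayed formula. For $\ell \leq k - 2$, the contribution $\tfrac{k - \ell - 1}{2} \geq \tfrac{1}{2}$ forces $\alpha$ large enough that $\theta_{\ell-1} - \theta_\ell$ reduces to an expression of the form $rj + (\text{nonnegative})$ with $j \geq 1$, hence strictly positive. Only at the boundary values $\ell \in \{k - 1, k\}$ can the answer be nonpositive. At $\ell = k - 1$, the only problematic subcase is ``$f < \tfrac{1}{2}$ together with $2t + s < r$'', where the difference reduces to $-t \leq 0$; this matches the second listed exception. At $\ell = k$, two subcases can produce nonpositive differences, namely $-t$ (when $r \leq 2t + s < 2r$ in the case $f < \tfrac{1}{2}$) and $t + s$ (when $2t + s < r$ in the case $f \geq \tfrac{1}{2}$), both covered by the first listed exception.

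The main obstacle is purely bookkeeping: once the parameterization is fixed, each subcase is a one-line computation, but one must track the interaction between the parity of $k - \ell - 1$ and the position of $2t + s$. The most delicate check is that at $\ell = k - 1$ in the case $f \geq \tfrac{1}{2}$ (i.e., $r \leq 2t + s < 2r$), the difference evaluates to $t + s$; the conditions $2t + s \geq r$ and $s < r$ force $t \geq 1$, so $t + s \geq 1 > 0$. This is what confirms that the second exception genuinely requires the floor condition $\lfloor \mu''_{\ell-1} \rfloor = \lfloor \mu''_\ell \rfloor$ and not merely $\ell = k - 1$.
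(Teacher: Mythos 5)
Your proposal is correct and takes essentially the same route as the paper: both split according to whether $\lfloor\mu''_{\ell-1}\rfloor$ equals $\lfloor\mu''_{\ell}\rfloor$ or exceeds it by one, and in each case reduce to the same explicit expression for $\theta_{\ell-1}-\theta_{\ell}$ (your $r(\alpha-1)-a$ and $r\alpha+2rf-r-a$ agree with the paper's $r\alpha''_{\ell-1}-r-a$ and $r\left(2\mu''_{\ell-1}-\alpha''_{\ell-1}+2\right)-r-a$). The only difference is the endgame: the paper pins down $\ell\in\left\{\lfloor(b-a)/r\rfloor-1,\,\lfloor(b-a)/r\rfloor\right\}$ by direct manipulation of inequalities in $\mu''_{\ell}$, while you substitute $a=qr+t$, $b-a=kr+s$ and verify the parity/interval subcases explicitly, which is a correct, if more computational, way to finish.
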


\begin{proof}
	Let $\alpha''_{\ell}:=\lfloor\mu''_{\ell}\rfloor+1$ and $\alpha''_{\ell-1}:=\lfloor\mu''_{\ell-1}\rfloor+1$. Since $$\mu''_{\ell}=\mu''_{\ell-1}-\frac{1}{2},$$ either $\alpha''_{\ell}=\alpha''_{\ell-1}$ or $\alpha''_{\ell}=\alpha''_{\ell-1}-1$.
	
	First suppose $\alpha''_{\ell}=\alpha''_{\ell-1}$. Then $\theta_{\ell}-\theta_{\ell-1}=r+a-r\alpha''_{\ell-1}$. Suppose $r+a-r\alpha''_{\ell-1}\geq0$. This is equivalent to having $$\frac{r+a}{r}-1\geq\lfloor\mu''_{\ell}\rfloor,$$ since $\alpha''_{\ell}=\alpha''_{\ell-1}$. Suppose $$\frac{r+a}{r}-1>\mu''_{\ell}.$$ Then $$2r+2a-2r>a+b-r(\ell+1),$$ which implies $$\frac{b-a}{r}<\ell+1\leq\Bigg\lfloor\frac{b-a}{r}\Bigg\rfloor+1.$$ This is absurd, unless $$\ell=\Bigg\lfloor\frac{b-a}{r}\Bigg\rfloor.$$ So, assuming $$\ell\leq\Bigg\lfloor\frac{b-a}{r}\Bigg\rfloor-1,$$ we have  $$\frac{r+a}{r}-1\leq\mu''_{\ell}.$$ Since $$\lfloor\mu''_{\ell}\rfloor=\lfloor\mu''_{\ell-1}\rfloor\leq\frac{r+a}{r}-1,$$ we must have $$\mu''_{\ell}-\left(\frac{r+a}{r}-1\right)<\frac{1}{2}.$$ So $$\frac{a+b}{2r}-\frac{(\ell+1)}{2}-\frac{(r+a)}{r}+1<\frac{1}{2},$$ which implies $$\ell>\frac{b-a}{r}-2,$$ whence $$\ell\geq\Bigg\lfloor\frac{b-a}{r}\Bigg\rfloor-1.$$ It follows that $$\ell=\Bigg\lfloor\frac{b-a}{r}\Bigg\rfloor-1.$$
	
	Next, suppose $\alpha''_{\ell}=\alpha''_{\ell-1}-1$. Then $\theta_{\ell}-\theta_{\ell-1}=r+a-r\left(2\mu''_{\ell-1}-\alpha''_{\ell-1}+2\right)$. Suppose $$r+a-r\left(2\mu''_{\ell-1}-\alpha''_{\ell-1}+2\right)\geq0.$$ Then $$r+a-r\left(2\left(\frac{a+b}{2r}-\frac{\ell}{2}\right)-\alpha''_{\ell-1}+2\right)\geq0.$$ So $$1+\frac{a}{r}\geq\frac{a+b}{r}-\ell-\alpha''_{\ell-1}+2,$$ which implies $$\frac{b}{r}\leq \ell-1+\alpha''_{\ell-1}\leq \ell-1+\mu''_{\ell-1}+1=\ell+\frac{a+b}{2r}-\frac{\ell}{2}.$$ Thus, $$\frac{b-a}{2r}\leq\frac{\ell}{2},$$ whence $$\ell\geq\frac{b-a}{r}\geq\Bigg\lfloor\frac{b-a}{r}\Bigg\rfloor.$$ Thus, we must have $$\ell=\Bigg\lfloor\frac{b-a}{r}\Bigg\rfloor.$$
\end{proof}

\subsection{Bound for the non-globally generated case}

We now refine the bound on $h^0(E)$ furnished by Theorem \ref{thm:bound_sections} for  sheaves $E$ on $\Quadric$ that are not generically globally generated. We begin with a preliminary lemma.

\begin{lemma}\label{lem:monotonicity_of_beta_1}
	The function $\beta_{r,\mu}$ defined in Definition \ref{def:alpha_beta} satisfies the following monotonicity properties. \begin{enumerate}
		\item For any rational number $\mu\geq-1$ and any integers $r\geq r'\geq1$, we have $\beta_{r',\mu}\leq\beta_{r,\mu}$.
		\item For any rational number $\mu\geq0$ and any integers $r>r'\geq1$, we have $\beta_{r',\mu}<\beta_{r,\mu}$.
		\item For any integer $r\geq1$ and any rational numbers $\mu\geq\mu'\geq-1$, we have $\beta_{r,\mu'}\leq\beta_{r,\mu}$.
		\item For any integer $r\geq1$ and any rational numbers $\mu>\mu'\geq-1$ with $\mu\geq0$, we have $\beta_{r,\mu'}<\beta_{r,\mu}$.
	\end{enumerate}
\end{lemma}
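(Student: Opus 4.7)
The plan is to write $\beta_{r,\mu}=r\cdot f(\mu)$, where $f(\mu):=\alpha_\mu(2\mu-\alpha_\mu+2)$ depends only on $\mu$, and to establish all four statements by studying $f$ directly. Parts (i) and (ii) will follow immediately once we show $f(\mu)\geq 0$ for $\mu\geq -1$ and $f(\mu)\geq 1$ for $\mu\geq 0$, since then multiplying by the nonnegative integer $r$ is (strictly) monotonic in $r$. For the first estimate I would use the remark immediately after Definition \ref{def:alpha_beta}, which already gives $\beta_{r,\mu}\geq r\alpha_\mu^2\geq 0$, i.e.\ $f(\mu)\geq\alpha_\mu^2$. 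For the second, when $\mu\geq 0$ we have $\alpha_\mu\geq 1$ and also $2\mu-\alpha_\mu+2\geq 2(\alpha_\mu-1)-\alpha_\mu+2=\alpha_\mu\geq 1$, so $f(\mu)\geq 1$; hence $\beta_{r,\mu}-\beta_{r',\mu}=(r-r')f(\mu)\geq r-r'$, which is strictly positive whenever $r>r'$.

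For parts (iii) and (iv) I would analyze $f$ as a function of $\mu$ over the intervals where $\alpha_\mu$ is constant. On each half-open interval $[k-1,k)$ with integer $k\geq 0$ one has $\alpha_\mu=k$, and so $f(\mu)=k(2\mu-k+2)$ is linear in $\mu$ with slope $2k$. In particular $f\equiv 0$ on $[-1,0)$, while for $k\geq 1$ the function $f$ is strictly increasing on $[k-1,k)$. At an integer $\mu=k\geq 0$, the one-sided limit from the left is $k(k+2)$, while the actual value is $(k+1)(k+1)$, yielding an upward jump of $(k+1)^2-k(k+2)=1$. Thus $f$ is nondecreasing on $[-1,\infty)$, which gives (iii) after multiplying by $r$.

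For (iv), assume $\mu>\mu'\geq -1$ with $\mu\geq 0$. If both $\mu,\mu'$ lie in the same interval $[k-1,k)$ then necessarily $k\geq 1$ (since $\mu\geq 0$), and the slope $2k>0$ on that interval yields $f(\mu)>f(\mu')$. If they lie in different intervals, I would pick any integer $k$ with $\mu'<k\leq \mu$ and chain together the monotonicity on each subinterval with the upward jump of $+1$ at $\mu=k$; the jump alone already gives $f(\mu)>f(\mu')$, since $f$ is nondecreasing on $[\mu',k)$ and on $[k,\mu]$. Multiplying by $r\geq 1$ completes the proof.

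There is no genuine obstacle here; the content is a careful bookkeeping of the piecewise-linear structure of $f$ together with the size of its integer jumps. The only mildly delicate point is making sure the strict inequalities in (ii) and (iv) are handled correctly in the boundary case $\mu=0$, which is covered by noting $f(0)=1>0=f(\mu')$ for any $\mu'\in[-1,0)$.
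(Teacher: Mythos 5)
Your proof is correct and follows essentially the same route as the paper: parts (i) and (ii) reduce to the positivity of $\alpha_\mu(2\mu-\alpha_\mu+2)$ (strict when $\mu\geq 0$), and parts (iii) and (iv) split according to whether $\mu$ and $\mu'$ share the same value of $\alpha$, exactly as in the paper's two cases. The only cosmetic difference is that you package the cross-interval comparison as the piecewise-linear function $f$ having slope $2k$ and a unit upward jump at each integer, whereas the paper reaches the same conclusion through the chain $\beta_{r,\mu'}<r\alpha'(\alpha'+2)\leq r(\alpha-1)(\alpha+1)<r\alpha^2\leq\beta_{r,\mu}$.
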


\begin{proof}
	The proofs of (i) and (ii) are clear, since $\alpha_{\mu}(2\mu-\alpha_{\mu}+2)\geq0$ with strict inequality in the situation of (ii) (because $\mu\geq\alpha_{\mu}-1$).
	
	We prove (iii) now. Let $\alpha:=\alpha_{\mu}$ and $\alpha':=\alpha_{\mu'}$.
	
	\emph{Case 1:} $\alpha'=\alpha$. Then $\beta_{r,\mu'}=r\alpha(2\mu'-\alpha+2)\leq r\alpha(2\mu-\alpha+2)=\beta_{r,\mu}$.
	
	\emph{Case 2:} $\alpha'<\alpha$. We have \begin{align*}
		\beta_{r,\mu'}&=r\alpha'(2\mu'-\alpha'+2)\\
		&\leq r(\alpha-1)(2\mu'-\alpha'+2)\qquad\qquad\qquad\,(\mbox{since }\alpha'\leq\alpha-1)\\
		&\leq r(\alpha-1)(\alpha'+2)\qquad\qquad\qquad\qquad\,\,\,\,\,(\mbox{since }\mu'<\alpha')\\
		&\leq r(\alpha-1)(\alpha+1)\qquad\qquad\qquad\qquad\quad\,(\mbox{since }\alpha'\leq\alpha-1)\\
		&=r(\alpha^2-1)\\
		&<r\alpha^2\\
		&=r\alpha(2(\alpha-1)-\alpha+2)\\
		&\leq\beta_{r,\mu}\qquad\qquad\qquad\qquad\qquad\qquad\,\,\qquad(\mbox{since }\alpha-1\leq\mu).
	\end{align*}

	Assertion (iv) is also clear now.
\end{proof}

Based on how globally generated the sheaf $E$ is, one can strengthen the upper bound on $h^0(E)$ from Theorem \ref{thm:bound_sections}.

\begin{theorem}\label{thm:stratified_bound}
	Let $E$ be a torsion-free sheaf of rank $r\geq1$ on $\Quadric$ with $h^0(E)>0$. Let $S\subset E$ be the image of the canonical evaluation map $\OO_{\Quadric}\otimes H^0(E)\to E$. Let $\mu'\in\left[0,\mumax(E)\right]$ be the largest rational number that can be written with denominator belonging to the set $\{2,4,\ldots,2\rk(S)\}$. Then $$h^0(E)\leq\beta_{\rk(S),\mu'}.$$
\end{theorem}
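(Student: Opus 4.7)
The plan is to reduce the statement to Theorem \ref{thm:bound_sections} applied to the subsheaf $S$, and then use the monotonicity of $\beta$ from Lemma \ref{lem:monotonicity_of_beta_1} to pass from $\mumax(S)$ to $\mu'$. The main ingredients are the following three observations about $S$.

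First, since $S$ is the image of $\OO_{\Quadric}\otimes H^0(E)\to E$, the inclusion $S\hookrightarrow E$ induces an identification $H^0(S)=H^0(E)$: every global section of $E$ factors through $S$ by construction, while conversely $H^0(S)\hookrightarrow H^0(E)$ by the inclusion. So it suffices to bound $h^0(S)$. Second, $S$ is globally generated by construction, so by Lemma \ref{lem:ggg_implies_a,b_nonnegative} we have $c_1(S)=(a,b)$ with $a,b\geq 0$, and hence $\mu(S)\geq 0$. Third, because $S\subseteq E$, any destabilizing subsheaf of $S$ is also a subsheaf of $E$, giving $\mumax(S)\leq\mumax(E)$; combined with $\mumax(S)\geq\mu(S)\geq 0$, this puts $\mumax(S)\in[0,\mumax(E)]$.

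Now I would apply Theorem \ref{thm:bound_sections} to $S$: since $\mumax(S)\geq 0\geq -1$, we get
\[
h^0(E)=h^0(S)\leq \beta_{\rk(S),\,\mumax(S)}.
\]
To finish, I need to compare $\mumax(S)$ with $\mu'$. By definition, $\mumax(S)$ is the slope of the first Harder--Narasimhan factor of $S$, a $\mu_H$-semistable sheaf of some rank $r''\leq \rk(S)$; on $\Quadric$ with polarization $(1,1)$, such a slope has denominator $2r''\in\{2,4,\ldots,2\rk(S)\}$. Therefore $\mumax(S)$ is a rational number in $[0,\mumax(E)]$ writable with denominator in $\{2,4,\ldots,2\rk(S)\}$, so by the defining maximality of $\mu'$ we obtain $\mumax(S)\leq\mu'$. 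Applying Lemma \ref{lem:monotonicity_of_beta_1}(iii) yields $\beta_{\rk(S),\mumax(S)}\leq\beta_{\rk(S),\mu'}$, which combined with the previous inequality gives the desired bound.

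There is no real obstacle here once the right subsheaf is identified: the whole argument is essentially a bookkeeping reduction to the generically globally generated case, with the only nontrivial point being the slightly subtle verification that $\mumax(S)$ lies in the discrete set of admissible denominators — that is what makes the refined quantity $\mu'$ (rather than $\mumax(E)$ itself) the natural input on the right-hand side.
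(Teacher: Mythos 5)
Your argument is correct and is essentially identical to the paper's proof: both reduce to $h^0(E)=h^0(S)$, apply Theorem \ref{thm:bound_sections} to the globally generated torsion-free subsheaf $S$ (using $0\leq\mumax(S)\leq\mumax(E)$), note that $\mumax(S)$ has denominator in $\{2,4,\ldots,2\rk(S)\}$ so that $\mumax(S)\leq\mu'$, and conclude via the monotonicity Lemma \ref{lem:monotonicity_of_beta_1}. No discrepancies to report.
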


\begin{proof}
	Since $S$ is a globally generated subsheaf of $E$, we have $0\leq\mumax(S)\leq\mumax(E)$. Applying Theorem \ref{thm:bound_sections} to the torsion-free sheaf $S$, we get $$h^0(E)=h^0(S)\leq\beta_{\rk(S),\mumax(S)}.$$ Since $\mumax(S)$ is the slope of a subsheaf of $S$, it is a rational number having denominator in the set $\{2,4,\ldots,2\rk(S)\}$. By Lemma \ref{lem:monotonicity_of_beta_1}, it follows that $\beta_{\rk(S),\mumax(S)}\leq\beta_{\rk(S),\mu'}$. The result follows.
\end{proof}

As a corollary, we get the promised bound for non-globally generated sheaves.

\begin{corollary}\label{cor:bound_sections_non_gen_gg}
	Let $E$ be a torsion-free sheaf of rank $r\geq2$ on $\Quadric$ with $\mumax(E)\geq-1$. Suppose $E$ is not generically globally generated. Let $\mu'\in\left[-1,\mumax(E)\right]$ be the largest rational number that can be written with denominator belonging to the set $\{2,4,\ldots,2(r-1)\}$. Then $$h^0(E)\leq\beta_{r-1,\mu'}.$$
\end{corollary}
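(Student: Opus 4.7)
The plan is to reduce the statement directly to Theorem \ref{thm:stratified_bound} (the stratified bound) by using the rank inequality $\rk(S)\leq r-1$ forced by the non-generic-global-generation hypothesis, combined with the monotonicity properties of $\beta_{r,\mu}$ recorded in Lemma \ref{lem:monotonicity_of_beta_1}.

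First I would dispose of the trivial case $h^0(E)=0$. Here the conclusion reduces to showing $\beta_{r-1,\mu'}\geq 0$, which is immediate: for $\mu'\geq -1$ one has $\alpha_{\mu'}\geq 0$ and $2\mu'-\alpha_{\mu'}+2\geq 2(\alpha_{\mu'}-1)-\alpha_{\mu'}+2=\alpha_{\mu'}\geq 0$. In particular, if $\mumax(E)<0$ then all Harder-Narasimhan factors of $E$ have negative slope, hence $h^0(E)=0$ and we are done. So I may assume $\mumax(E)\geq 0$ and $h^0(E)>0$.

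Next I apply Theorem \ref{thm:stratified_bound} to $E$. Let $S\subset E$ be the image of the evaluation map, and let $\mu''\in[0,\mumax(E)]$ be the largest rational number with denominator in $\{2,4,\ldots,2\rk(S)\}$. Theorem \ref{thm:stratified_bound} gives
\[
h^0(E)\leq \beta_{\rk(S),\mu''}.
\]
Because $E$ is not generically globally generated, $\rk(S)<r$, so $\rk(S)\leq r-1$. Hence the admissible denominator set $\{2,4,\ldots,2\rk(S)\}$ for $\mu''$ is contained in $\{2,4,\ldots,2(r-1)\}$, the set for $\mu'$. Since $\mumax(E)\geq 0$ and $0$ has denominator $2$, we have $\mu'\geq 0$, and by the containment of denominator sets $\mu''\leq \mu'$.

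Finally I chain two monotonicity estimates. By Lemma \ref{lem:monotonicity_of_beta_1}(iii), since $\mu''\leq \mu'$ and both are $\geq -1$ (in fact $\geq 0$),
\[
\beta_{\rk(S),\mu''}\leq \beta_{\rk(S),\mu'}.
\]
By Lemma \ref{lem:monotonicity_of_beta_1}(i), since $\rk(S)\leq r-1$,
\[
\beta_{\rk(S),\mu'}\leq \beta_{r-1,\mu'}.
\]
Concatenating gives $h^0(E)\leq \beta_{r-1,\mu'}$, as required. There is no real obstacle in this proof — once Theorem \ref{thm:stratified_bound} and the monotonicity lemma are in hand, the corollary is a bookkeeping argument exploiting $\rk(S)\leq r-1$.
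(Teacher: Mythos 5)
Your proposal is correct and follows essentially the same route as the paper: dispose of the case $h^0(E)=0$ using $\beta_{r-1,\mu'}\geq 0$, then apply Theorem \ref{thm:stratified_bound} to the image $S$ of the evaluation map, use $\rk(S)\leq r-1$ forced by the failure of generic global generation, and conclude with the monotonicity properties of Lemma \ref{lem:monotonicity_of_beta_1}. Your write-up merely makes explicit the comparison $\mu''\leq\mu'$ and the reduction $\mumax(E)\geq 0$ when $h^0(E)>0$, which the paper leaves implicit.
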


\begin{proof}
	If $h^0(E)=0$, there is nothing to prove since $\beta_{r-1,\mu'}\geq0$. So we assume that $h^0(E)>0$, and let $S$ be as in Theorem \ref{thm:stratified_bound} above. As $E$ is not generically globally generated, $\rk(S)\leq r-1$. By Lemma \ref{lem:monotonicity_of_beta_1}, the assertion is clear.
\end{proof}

\begin{remark}
	Let $\mu:=\mumax(E)$. Then in the notation of the above theorem and the corollary, we must have $\lfloor\mu'\rfloor=\lfloor\mu\rfloor$ (otherwise we have $\mu'<\lfloor\mu\rfloor\leq\mu$, and since $\lfloor\mu\rfloor=j\lfloor\mu\rfloor/j$ for any $j\in\{2,4,\ldots,2(r-1),2r\}$, we have a contradiction to the choice of $\mu'$).
\end{remark}

\section{Investigating maximality - balanced case}\label{sec:maximal_balanced}

Following \cite{CHR25}, we consider the question of characterizing $\mu_H$-semistable sheaves that achieve the bound in Theorem \ref{thm:bound_sections}. The answer we find is analogous to the case of $\PP^2$ \textendash\,\,all such sheaves are \emph{balanced twisted Steiner-like}, given by nice resolutions. They are general elements of their moduli spaces (see \cite{Gaeta}). We begin by investigating global generation and nonspeciality properties of sheaves achieving the bound, which then come in handy to characterize them using a Beilinson spectral sequence.

\begin{definition}\label{def:deficiency_maximality}
	A $\mu_H$-semistable sheaf $E$ on $\Quadric$ of rank $r\geq1$ and slope $\mu\geq-1$ is said to have \emph{deficiency} $\delta$ if $h^0(E)=\beta_{r,\mu}-\delta$. We call $E$ \emph{maximal} if it has deficiency $0$.
\end{definition}

Unless specified otherwise, whenever we talk about deficiency (and maximality) of a sheaf, we shall assume the sheaf to be $\mu_H$-semistable.

Our main result in this section is the following.

\begin{theorem}\label{thm:full_structure_thm_maximal_sheaves_with_vanishing_h1}
	Let $E$ be a maximal sheaf on $\Quadric$ of rank $r\geq1$ and slope $\mu\geq0$. Then $h^1(E)=h^2(E)=0$ and $c_1(E)=r(\alpha-1,\alpha-1)+(m,n)$ for some non-negative integers $m,n$, with $\alpha=\alpha_{\mu}$ and $m+n<2r$. Moreover, $E$ is a \emph{balanced twisted Steiner-like sheaf} (see Definition \ref{def:Steiner-like_sheaf}) having a resolution $$0\to \OO(\alpha-2,\alpha-1)^m\oplus\OO(\alpha-1,\alpha-2)^n\to \OO(\alpha-1,\alpha-1)^{r+m+n}\to  E\to 0,$$ and it is a globally generated vector bundle. Furthermore, $E$ also has a resolution of the form $$0\to \OO(-1,-1)^{\beta_{r,\mu-1}}\to \OO(-1,0)^{m+r(\alpha-1)+\beta_{r,\mu-1}}\oplus\OO(0,-1)^{n+r(\alpha-1)+\beta_{r,\mu-1}}\to \OO^{\beta_{r,\mu}}\to E\to 0.$$
\end{theorem}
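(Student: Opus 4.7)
The plan is to extract consequences of maximality by tracking equality in the inductive proof of Theorem \ref{thm:bound_sections}, and then apply the Beilinson spectral sequence on $\Quadric$ to produce the two explicit resolutions.

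First, I would analyze the equality case in Case 1 of the proof of Theorem \ref{thm:bound_sections}. When $E$ is maximal, the chain $h^0(E) \leq h^0(E(-1,-1)) + h^0(E|_C) \leq \beta_{r,\mu-1} + r + c_1(E)\cdot H \leq \beta_{r,\mu}$ is tight at every step, which forces: (a) $E$ is generically globally generated (else Case 2 of the proof gives a strict inequality); (b) $h^1(E(-1,-1)) = 0$, since only then does the restriction long exact sequence yield the full count; (c) $E|_C \cong \bigoplus_i \OO_C(d_i)$ with $d_i \geq 0$ on a general $(1,1)$-curve $C$; and (d) when $\mu \geq 1$, $E(-1,-1)$ is itself maximal of slope $\mu - 1$. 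Iterating (d), $E(-k, -k)$ is maximal for $0 \leq k \leq \alpha-1$, so $E(-(\alpha-1), -(\alpha-1))$ has slope in $[0, 1)$ and it suffices to treat this base case and twist back. The vanishing $h^2(E) = 0$ is immediate from Serre duality and semistability since $\mu \geq 0 > -2$.

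Next I would establish the remaining cohomology vanishings needed for the Beilinson spectral sequence to collapse, namely $h^1(E) = h^1(E(-1,0)) = h^1(E(0,-1)) = 0$. For $h^1(E(-1,0))$, use the restriction sequence $0 \to E(-1,0) \to E \to E|_{L_v} \to 0$ to a general vertical line $L_v$ of class $(0, 1)$. Since $E$ is generically globally generated, $E|_{L_v}$ is a sum of non-negative line bundles on $\PP^1$, so $h^1(E|_{L_v}) = 0$ and $h^0(E|_{L_v}) = r + a$, where $c_1(E) = (a, b)$; combined with the known value $h^0(E) = \beta_{r,\mu}$, the long exact sequence relates $h^1(E(-1,0))$ to $h^1(E)$. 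A symmetric argument with a horizontal line handles $h^1(E(0,-1))$. Finally, combining both with $h^1(E(-1,-1)) = 0$ and an exact Riemann-Roch calculation using the form of $c_1(E)$ should force all three $h^1$'s to vanish.

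With the required vanishings in hand, I would apply the Beilinson spectral sequence on $\Quadric$ built from the exceptional collection $\{\OO(-1,-1), \OO(-1,0), \OO(0,-1), \OO\}$. Applied to $E(-\alpha+1, -\alpha+1)$, which has slope in $[0, 1)$ (so the analog of $\beta_{r,\mu-1}$ vanishes), only three terms survive and the sequence degenerates to $0 \to \OO(-1,0)^m \oplus \OO(0,-1)^n \to \OO^{r+m+n} \to E(-\alpha+1,-\alpha+1) \to 0$; twisting back by $(\alpha-1, \alpha-1)$ gives the first stated resolution. Applied directly to $E$, the spectral sequence yields the second, four-term resolution, with multiplicities $h^0(E(-p_1, -p_2))$ for $(p_1, p_2) \in \{0, 1\}^2$ computed by Riemann-Roch using the established vanishings and the form of $c_1(E)$; in particular, the $\OO(-1,-1)$ term has rank $h^0(E(-1,-1)) = \beta_{r,\mu-1}$ by maximality of $E(-1,-1)$.

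The main obstacle I anticipate is the simultaneous verification of the $h^1$-vanishings in the base case: the restriction sequences alone only give inequalities and linear relations among the various $h^1$'s, and pinning each to zero requires carefully combining the horizontal and vertical restriction sequences with an Euler-characteristic calculation and the Bogomolov inequality from semistability. Once the resolutions are secured, the remaining assertions follow routinely: global generation and the vector-bundle property come from the surjection onto $E$ and local freeness of the other terms and kernel; the form $c_1(E) = r(\alpha-1,\alpha-1) + (m, n)$ with $m, n \geq 0$ and $m + n < 2r$ is read off directly, with non-negativity from Lemma \ref{lem:ggg_implies_a,b_nonnegative} applied to $E(-\alpha+1, -\alpha+1)$ and $m + n < 2r$ from $\mu < \alpha$.
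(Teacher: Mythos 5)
The decisive gap is in your treatment of the $h^1$-vanishing, which is the real content of the theorem. Tracking equality in the inductive proof of Theorem \ref{thm:bound_sections} does \emph{not} give your claim (b) that $h^1(E(-1,-1))=0$: the long exact sequence for restriction to a general $C\in|\OO(1,1)|$ reads $0\to H^0(E(-1,-1))\to H^0(E)\to H^0(E|_C)\to H^1(E(-1,-1))\to H^1(E)\to 0$, so tightness only forces $h^0(E(-1,-1))=\beta_{r,\mu-1}$ (i.e.\ the twist is again maximal) and $h^1(E(-1,-1))=h^1(E)$ --- it says nothing about either being zero. Nor can your fallback plan (``horizontal and vertical restriction sequences, an Euler-characteristic calculation, and the Bogomolov inequality'') close this: since $h^2(E)=0$, one has $h^1(E)=h^0(E)-\chi(E)=\beta_{r,\mu}-\chi(E)$, and $\chi(E)$ involves $c_2(E)$, which semistability bounds only from below (Bogomolov gives a one-sided inequality on the discriminant); no numerical relation among the twists can rule out large $c_2$, i.e.\ large $h^1$. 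The paper's actual mechanism (Proposition \ref{prop:vanishing_h1_vbundles}, following Le Potier) is a genuinely geometric one: the maps $\varphi_C:H^1(E(-1,-1))\to H^1(E)$ are shown to be surjective for \emph{every} member $C$ of the full linear system $|\OO(1,1)|\cong\PP^3$, including the reducible members $C_1\cup C_2$, and they assemble into a surjection of sheaves $\OO_{\PP^3}(-1)\otimes H^1(E(-1,-1))\to\OO_{\PP^3}\otimes H^1(E)$ on $\PP^3$; since the two spaces have equal dimension this would be an isomorphism $\OO_{\PP^3}(-1)^k\cong\OO_{\PP^3}^k$, impossible unless $k=h^1(E)=0$. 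Your proposal never identifies this idea, and without it the argument does not go through.

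Two further points where your plan diverges from what is actually needed. First, the Le Potier argument requires $E$ to be locally free (so that $H^1(E|_C)=0$ for all smooth $C$ and the reducible-curve computation works), while you propose to read off local freeness at the end from the resolution; but a two-term resolution by sums of line bundles does not imply local freeness on a surface (e.g.\ $0\to\OO(-1,-1)\to\OO(-1,0)\oplus\OO(0,-1)\to I_p\to0$), and the paper needs a separate induction by elementary modifications (Propositions \ref{prop:maximal_section_with_vanishing_h1_is_vbundle} and \ref{prop:maximal_section_with_gen_gg_is_vbundle}) to remove the vector-bundle hypothesis before the cohomology vanishing can be applied to an arbitrary maximal sheaf. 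Second, for $h^1(E(-1,0))=h^1(E(0,-1))=0$ the paper does not use restriction to rulings at all: these vanishings are extracted from the Beilinson spectral sequence itself, since the cokernel of the evaluation map $\OO^{\beta_{r,\mu}}\to E$ is identified with $\OO(-1,0)^{h^1(E(-1,0))}\oplus\OO(0,-1)^{h^1(E(0,-1))}$, which must be torsion by generic global generation. Your Beilinson step and the final bookkeeping ($c_1$, global generation, the twist-down induction on $\alpha$) are in the right spirit, but as written the proof collapses at the $h^1(E)=0$ step.
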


The rest of this section develops material needed to prove the above theorem.

\subsection{Global generation}

In this subsection we discuss global generation properties of sheaves with small deficiency.

\begin{lemma}\label{lem:small_deficiency_implies_gen_gg}
	Let $E$ be a $\mu_H$-semistable sheaf on $\Quadric$ of rank $r\geq1$, slope $\mu\geq0$ and deficiency strictly less than $\alpha_{\mu}^2$. Then $E$ is generically globally generated.
\end{lemma}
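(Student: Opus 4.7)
The plan is to argue by contrapositive, using the stratified bound of Theorem \ref{thm:stratified_bound} to show that failure of generic global generation forces a deficiency of at least $\alpha_{\mu}^2$. Suppose $E$ is not generically globally generated, and let $S\subset E$ be the image of the canonical evaluation map. Then $\rk(S)\leq r-1$. Since $E$ is $\mu_H$-semistable, $\mumax(E)=\mu$, so Theorem \ref{thm:stratified_bound} gives
\[
h^0(E)\leq \beta_{\rk(S),\mu'},
\]
where $\mu'\in[0,\mu]$ is the largest rational number expressible with denominator in $\{2,4,\ldots,2\rk(S)\}$.

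The key numerical observation is that $\alpha_{\mu'}=\alpha_{\mu}$. Indeed, $\lfloor\mu\rfloor$ is a non-negative integer (using $\mu\geq 0$), and hence can be written with denominator $2$ (as $\lfloor\mu\rfloor=2\lfloor\mu\rfloor/2$), so $\lfloor\mu\rfloor$ is among the rationals considered in the definition of $\mu'$. This forces $\mu'\geq\lfloor\mu\rfloor$, and combined with $\mu'\leq\mu<\lfloor\mu\rfloor+1$ we conclude $\lfloor\mu'\rfloor=\lfloor\mu\rfloor$, so $\alpha_{\mu'}=\alpha_{\mu}=:\alpha$.

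With this in hand, the estimate becomes immediate. Writing $\beta_{\rk(S),\mu'}=\rk(S)\,\alpha(2\mu'-\alpha+2)$ and using $\rk(S)\leq r-1$ together with $\mu'\leq\mu$, we obtain
\[
\beta_{\rk(S),\mu'}\leq (r-1)\alpha(2\mu-\alpha+2)=\beta_{r,\mu}-\alpha(2\mu-\alpha+2).
\]
Since $\mu\geq\alpha-1$, we have $2\mu-\alpha+2\geq\alpha$, so $\alpha(2\mu-\alpha+2)\geq\alpha^2$, whence $h^0(E)\leq\beta_{r,\mu}-\alpha^2$. This contradicts the hypothesis that the deficiency of $E$ is strictly less than $\alpha^2$, completing the proof.

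There isn't really a hard step here; the argument is a direct application of Theorem \ref{thm:stratified_bound} together with the elementary observation about $\alpha_{\mu'}$. The only point that requires care is verifying that $\mu'\geq\lfloor\mu\rfloor$, which relies on the fact that integers live inside $\frac{1}{2}\mathbb{Z}$ and hence are always admissible values for $\mu'$.
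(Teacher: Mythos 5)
Your argument is correct and arrives at the same contradiction as the paper, but it is packaged differently. The paper applies Theorem \ref{thm:bound_sections} directly to the image sheaf $S$ with $\mu':=\mumax(S)$, which forces a two-case analysis according to whether $\alpha_{\mu'}=\alpha_{\mu}$ or $\alpha_{\mu'}<\alpha_{\mu}$ (the latter handled by the chain $\beta_{r',\mu'}<r'\alpha'(\alpha'+2)\leq\cdots\leq\beta_{r,\mu}-\alpha_{\mu}^2$). You instead route through Theorem \ref{thm:stratified_bound}, whose numerically defined $\mu'$ (the largest admissible rational in $[0,\mu]$) automatically satisfies $\mu'\geq\lfloor\mu\rfloor$ because $\lfloor\mu\rfloor$ can be written with denominator $2$; this floor-preservation observation (which the paper records only in the remark following Corollary \ref{cor:bound_sections_non_gen_gg}) collapses the case analysis into the single estimate $\beta_{\rk(S),\mu'}\leq(r-1)\alpha(2\mu-\alpha+2)=\beta_{r,\mu}-\alpha(2\mu-\alpha+2)\leq\beta_{r,\mu}-\alpha^2$. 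What your route buys is brevity; the cost is that the comparison of $\beta$ across ranks and slopes is outsourced to Lemma \ref{lem:monotonicity_of_beta_1} via Theorem \ref{thm:stratified_bound}. There is no circularity in doing so, since both are established in \S\ref{sec:bounds_quadric} independently of the present lemma. Two small points should be added to make the write-up airtight: first, Theorem \ref{thm:stratified_bound} (and indeed the assertion $1\leq\rk(S)\leq r-1$) requires $h^0(E)>0$, which you never verify; it follows from your hypotheses exactly as in the paper's opening step, since $\beta_{r,\mu}\geq r\alpha_{\mu}^2\geq\alpha_{\mu}^2$ and deficiency $<\alpha_{\mu}^2$ force $h^0(E)>\beta_{r,\mu}-\alpha_{\mu}^2\geq0$. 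Second, the passage from $\rk(S)\alpha(2\mu'-\alpha+2)$ to $(r-1)\alpha(2\mu-\alpha+2)$ implicitly uses that the factor $2\mu'-\alpha+2$ is positive, which is immediate from your observation $\mu'\geq\lfloor\mu\rfloor=\alpha-1$, giving $2\mu'-\alpha+2\geq\alpha>0$; it is worth saying so explicitly.
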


\begin{proof}
	First of all, note that $E$ must have a non-zero global section. Suppose not. Then $$0=h^0(E)>\beta_{r,\mu}-\alpha_{\mu}^2,$$ whence $\beta_{r,\mu}<\alpha_{\mu}^2$. This is a contradiction, since $\beta_{r,\mu}\geq r\alpha_{\mu}^2\geq\alpha_{\mu}^2$.
	
	Now suppose $E$ is not generically globally generated. Let $S\subset E$ be the image of the canonical evaluation map $\OO_{\Quadric}\otimes H^0(E)\to E$, and let $r'$ be its rank. Let $\mu':=\mumax(S)$ and $\alpha':=\alpha_{\mu'}$. Then $\mu'\leq\mu$ and thus $\alpha'\leq\alpha$, where $\alpha:=\alpha_{\mu}$ for simplicity of notation.
	
	Suppose first that $\alpha'=\alpha$. Then \begin{align*}
		\beta_{r,\mu}-\beta_{r',\mu'}&\geq\beta_{r,\mu}-(r-1)\alpha(2\mu-\alpha+2)\\
		&=\alpha(2\mu-\alpha+2)\\
		&\geq\alpha(2(\alpha-1)-\alpha+2)\qquad\qquad\quad(\mbox{since }\mu\geq\alpha-1).\\
		&=\alpha^2.
	\end{align*} Since $S$ is globally generated, $\mu'\geq0>-1$ by Lemma \ref{lem:ggg_implies_a,b_nonnegative}. Thus, by Theorem \ref{thm:bound_sections}, $$h^0(E)=h^0(S)\leq\beta_{r',\mu'}\leq\beta_{r,\mu}-\alpha^2<h^0(E).$$ This is a contradiction.
	
	Next, suppose $\alpha'<\alpha$. We see that \begin{align*}
		\beta_{r',\mu'}&<r'\alpha'(\alpha'+2)\qquad\qquad\qquad\,(\mbox{since }\mu'<\alpha')\\
		&\leq r'(\alpha-1)(\alpha+1)\qquad\,\qquad(\mbox{since }\alpha'\leq\alpha-1)\\
		&=r'(\alpha^2-1)\\
		&<r'\alpha^2\\
		&\leq(r-1)\alpha^2\\
		&=r\alpha^2-\alpha^2\\
		&\leq\beta_{r,\mu}-\alpha^2\qquad\qquad\qquad\quad(\mbox{since }\alpha\leq\mu+1).
	\end{align*} We again have a contradiction as earlier.
\end{proof}

\begin{remark}\label{rmk:deficiency_less_than_alpha_square_is_sharp_for_gen_gg}
	The above result is sharp. For example, the sheaf $I_{p_1,\ldots,p_4}(1,1)$ has deficiency $\alpha_{\mu}^2$ and is not generically globally generated, where $p_1,\ldots,p_4$ are general points of $\Quadric$.
\end{remark}

In fact, we have a stronger global generation property for maximal sheaves.

\begin{proposition}\label{prop:small_deficiency_implies_gg_cdim_1}
	Let $E$ be a maximal sheaf on $\Quadric$ of rank $r\geq1$ and slope $\mu\geq0$. Then $E$ is globally generated in codimension $1$.
\end{proposition}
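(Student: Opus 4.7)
The plan is to leverage the equality case of the inductive step used in the proof of Theorem \ref{thm:bound_sections} (Case 1). Specifically, maximality of $E$ will force \emph{both} inequalities in the estimate coming from the restriction exact sequence to a general curve of class $H$ to be equalities. This will upgrade the abstract bound into a surjectivity statement for the restriction map on sections, which in turn forces $E$ to be globally generated along a general curve of class $H$.

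First I would invoke Lemma \ref{lem:small_deficiency_implies_gen_gg} (since the deficiency is $0<\alpha_\mu^2$) to conclude that $E$ is generically globally generated. Let $C$ be a general smooth irreducible curve of class $H=(1,1)$, chosen so that $E$ is locally free along $C$ and generically globally generated at a general point of $C$. Then $E|_C\cong\bigoplus_i\OO_C(d_i)$ on $C\cong\PP^1$ with every $d_i\geq 0$, so $E|_C$ is globally generated and
\[
h^0(E|_C)=r+c_1(E)\cdot H=r(1+2\mu).
\]
Next I would perform the short numerical computation
\[
\beta_{r,\mu}-\beta_{r,\mu-1}=r\alpha(2\mu-\alpha+2)-r(\alpha-1)(2\mu-\alpha+1)=r(2\mu+1),
\]
where I use $\mu\geq 0$ to ensure $\alpha_{\mu-1}=\alpha_\mu-1$. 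Combining this with the cohomology sequence of
\[
0\to E(-1,-1)\to E\to E|_C\to 0
\]
and applying Theorem \ref{thm:bound_sections} to $E(-1,-1)$ (which has $\mumax(E(-1,-1))=\mu-1\geq-1$) yields
\[
\beta_{r,\mu}=h^0(E)\leq h^0(E(-1,-1))+h^0(E|_C)\leq \beta_{r,\mu-1}+r(2\mu+1)=\beta_{r,\mu}.
\]
Hence every inequality is an equality, and in particular the restriction map $H^0(E)\to H^0(E|_C)$ is surjective.

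The surjectivity of $H^0(E)\to H^0(E|_C)$, combined with the fact that $E|_C$ is globally generated on $C$, implies that the evaluation map $H^0(E)\otimes\OO_C\to E|_C$ is surjective, i.e.\ $E$ is globally generated at every point of $C$. To conclude, I would argue by contradiction: let $N\subset\Quadric$ be the locus where $E$ fails to be globally generated, and suppose $N$ has a $1$-dimensional irreducible component $D$. Then $D$ is an effective curve, so $D\cdot H>0$, which means every curve of class $H$ meets $D$. In particular our general $C$ meets $D\subseteq N$, contradicting the fact that $E$ is globally generated along $C$. Therefore $\dim N\leq 0$, so $E$ is globally generated in codimension $1$.

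The only mildly subtle step is verifying the numerical identity $\beta_{r,\mu}-\beta_{r,\mu-1}=r(2\mu+1)$, and ensuring the hypothesis $\mu\geq 0$ is used correctly so that $\alpha_{\mu-1}=\alpha_\mu-1$ and Theorem \ref{thm:bound_sections} is applicable to $E(-1,-1)$. The remainder of the argument is essentially a tight reading of the equality case of the Clifford-type induction.
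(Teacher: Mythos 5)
Your proof is correct, but it takes a genuinely different route from the paper. The paper proves this proposition by induction on $\alpha_\mu$: the inductive step shows (via the same restriction sequence and Theorem \ref{thm:bound_sections}) that $E(-1,-1)$ is again maximal and reduces to the base case $\alpha_\mu=1$, which is handled separately by a contradiction argument involving the image $S$ of the evaluation map, the inequality $c_1(S)\cdot H<c_1(E)\cdot H$ forced by a $1$-dimensional cokernel, and the strengthened bound of Corollary \ref{cor:strengthened_bound}. You instead exploit the equality case directly: since $\dim\operatorname{im}\bigl(H^0(E)\to H^0(E|_C)\bigr)=h^0(E)-h^0(E(-1,-1))\geq\beta_{r,\mu}-\beta_{r,\mu-1}=r(2\mu+1)=h^0(E|_C)$, the restriction map on sections is surjective for a general $C\in|H|$; combined with the global generation of $E|_C\cong\bigoplus_i\OO_C(d_i)$, $d_i\geq0$, and Nakayama along the locally free locus, this gives global generation at every point of $C$, and ampleness of $H$ then rules out any $1$-dimensional component of the non-generated locus. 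All the ingredients check out (Lemma \ref{lem:small_deficiency_implies_gen_gg} applies since $0<\alpha_\mu^2$, the identity $\beta_{r,\mu}-\beta_{r,\mu-1}=r(2\mu+1)$ is correct, and $\mu\geq0$ guarantees $\mu-1\geq-1$ so Theorem \ref{thm:bound_sections} applies to $E(-1,-1)$), so your argument is a legitimate and arguably shorter proof: it avoids both the induction and Corollary \ref{cor:strengthened_bound}, and gives the extra information that the restriction map to a general $H$-curve is surjective. What it does not produce, and what the paper's induction is also designed to record for later use, is the intermediate fact that $E(-1,-1)$ is again maximal with $\kappa'=0$, which is reused in Propositions \ref{prop:vanishing_h1_vbundles}, \ref{prop:max-sections_long_resolution} and \ref{prop:max-sections_are_twisted_Steiners}.
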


\begin{proof}
	Since $h^0(E)=\beta_{r,\mu}\geq r\alpha_{\mu}^2\geq1$, $E$ has at least one non-zero global section.
	
	Let $\alpha:=\alpha_{\mu}$. We shall induct on $\alpha$.
	
	First, suppose that $\alpha=1$, and that $E$ is not globally generated in codimension $1$. Let $S\subset E$ be the image of the canonical evaluation map $\OO_{\Quadric}\otimes H^0(E)\to E$, and let $\mu':=\mu_H(S)$. By Lemma \ref{lem:small_deficiency_implies_gen_gg}, we know that $E$ is generically globally generated. So $\rk(S)=r$. The cokernel $T$ of the inclusion $S\hookrightarrow E$ is a torsion sheaf with $1$-dimensional support. Thus, $c_1(E)\cdot H>c_1(S)\cdot H$. So we see that \begin{align*}
		\beta_{r,\mu}-\beta_{r,\mu'}&=r(2\mu+1)-r(2\mu'+1)\\
		&=2r(\mu-\mu')\\
		&=c_1(E)\cdot H-c_1(S)\cdot H\\
		&\geq1.
	\end{align*} Note that $\mumax(S)\leq\mu(E)<1$. Also, since $\mumin(S)$ is the slope of a quotient of $S$, and $S$ is globally generated, $\mumin(S)$ must be non-negative. Clearly, $\alpha_{\mu'}=1$. By Corollary \ref{cor:strengthened_bound}, $h^0(S)\leq\beta_{r,\mu'}$. Thus, $$h^0(E)=h^0(S)\leq\beta_{r,\mu'}\leq\beta_{r,\mu}-1<\beta_{r,\mu}=h^0(E).$$ This is a contradiction. So $E$ must be globally generated in codimension $1$.
	
	Now suppose $\alpha>1$. Consider the restriction exact sequence $$0\to E(-1,-1)\to E\to E|_C\to 0$$ for a general curve $C$ of class $(1,1)$. It gives us the long exact sequence $$0\to H^0(E(-1,-1))\to H^0(E)\to H^0(E|_C)\to H^1(E(-1,-1))\to H^1(E)\to 0,$$ where the last $0$ term is due to the vanishing of $H^1(E|_C)$ since $E|_C$ is globally generated (due to Lemma \ref{lem:small_deficiency_implies_gen_gg}). Let $\kappa':=h^1(E(-1,-1))-h^1(E)\geq0$. Then we have \begin{align*}
		h^0(E(-1,-1))&=h^0(E)-h^0(E|_C)+\kappa'\\
		&=\beta_{r,\mu}-(r+a+b)+\kappa'\\
		&=\beta_{r,\mu}-r(2\mu+1)+\kappa'\\
		&=r(\alpha-1)(2\mu-\alpha+1)+\kappa'\\
		&=\beta_{r,\mu-1}+\kappa'.
	\end{align*} By Theorem \ref{thm:bound_sections}, we know that $h^0(E(-1,-1))\leq\beta_{r,\mu-1}$, whence $\kappa'=0$. Thus, $E(-1,-1)$ is a maximal sheaf. By induction on $\alpha$, $E(-1,-1)$ is globally generated in codimension $1$. It follows that $E$ is globally generated in codimension $1$ as well.
\end{proof}

\subsection{Nonspeciality}

We now consider the question of nonspeciality of maximal sheaves. More precisely, we show that maximal sheaves have vanishing $h^1$. Since $\mu_H$-semistable sheaves on $\Quadric$ with slope $\geq-1$ do not have any $h^2$, it follows that maximal sheaves are \emph{nonspecial}, i.e., have only one non-zero cohomology group. In terms of Brill-Noether loci, this tells us that $B^{\chi(E)}$ is the entire moduli space $M(\rk(E),c_1(E),\chi(E))$ for any maximal sheaf $E$.

We first prove our result for vector bundles. In Subsection \ref{subsec:local_freeness_of_maximals} we show that any maximal sheaf is actually a vector bundle.

\begin{proposition}\label{prop:vanishing_h1_vbundles}
	Let $E$ be a maximal sheaf on $\Quadric$ of rank $r\geq1$ and slope $\mu\geq0$ that is also a vector bundle. Then $H^1(E)=0$.
\end{proposition}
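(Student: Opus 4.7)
The plan is to proceed by induction on $\alpha := \alpha_\mu$. For the \emph{inductive step} ($\alpha \geq 2$, so $\mu \geq 1$), take a general curve $C$ of class $H = (1,1)$ on $\Quadric$. Global generation in codimension one (Proposition \ref{prop:small_deficiency_implies_gg_cdim_1}) forces $E|_C$ to be a globally generated rank-$r$ vector bundle on $C \cong \PP^1$, so $h^0(E|_C) = r + c_1(E) \cdot H$ and $h^1(E|_C) = 0$. The Riemann--Roch calculation carried out in the $\alpha > 1$ case of the proof of Proposition \ref{prop:small_deficiency_implies_gg_cdim_1} (where the quantity $\kappa'$ was shown to vanish) identifies $h^0(E(-1,-1)) = \beta_{r,\mu-1}$, making $E(-1,-1)$ itself a maximal vector bundle of slope $\mu - 1 \geq 0$ with $\alpha_{\mu - 1} = \alpha - 1$. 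The inductive hypothesis gives $H^1(E(-1,-1)) = 0$, and the long exact sequence of $0 \to E(-1,-1) \to E \to E|_C \to 0$ (together with $H^1(E|_C) = 0$ and $H^2(E(-1,-1)) = 0$) then forces $H^1(E) = 0$.

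For the \emph{base case} ($\alpha = 1$, $\mu \in [0, 1)$), the twist $E(-1,-1)$ has slope in $[-1, 0)$, so $h^0(E(-1,-1)) = 0$ by Theorem \ref{thm:bound_sections} (with $\beta_{r,\mu-1} = 0$) and $h^2(E(-1,-1)) = 0$ by $\mu_H$-semistability. Restricting to a general $(1,1)$-curve $C$ with $h^0(E|_C) = r + a + b = h^0(E)$ (writing $c_1(E) = (a,b)$) forces $H^0(E) \xrightarrow{\sim} H^0(E|_C)$, hence $H^1(E) \cong H^1(E(-1,-1))$. Riemann--Roch on $\Quadric$ yields $\chi(E(-1,-1)) = ab - c_2(E)$, so
$$h^1(E) = c_2(E) - ab,$$
and it remains to prove $c_2(E) \leq ab$. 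The subcase $\mu = 0$ is immediate: Lemma \ref{lem:ggg_implies_a,b_nonnegative} gives $c_1(E) = 0$, and maximality $h^0(E) = r$ combined with global generation in codimension one forces the evaluation $\OO^r \to E$ to be a bundle isomorphism, so $E \cong \OO^r$ and $h^1(E) = 0$.

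For $\mu \in (0, 1)$, the plan is to use the four-term evaluation complex $0 \to K \to \OO^{r+a+b} \to E \to Q_0 \to 0$ (where $Q_0$ is $0$-dimensional of length $\ell \geq 0$) and its kernel $K$, torsion-free of rank $a + b$ with $c_1(K) = -(a,b)$ and $\mu(K) = -\tfrac12$. Splitting into two short exact sequences and chasing cohomology (using $H^{>0}(\OO^{r+a+b}) = 0$, $H^2(E) = 0$, and the identity $H^0(\Img(\ev)) = H^0(E)$ forced by maximality) yields $h^1(E) = h^2(K) - \ell$. Grothendieck--Serre duality with $\omega_\Quadric = \OO(-2,-2)$ identifies $h^2(K) = \dim\Hom(K, \OO(-2,-2))$, which vanishes whenever $\mumin(K) > -2$. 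Since the inclusion $K \hookrightarrow \OO^{r+a+b}$ forces $\mumax(K) \leq 0$ and $\mu(K) = -\tfrac12$, the natural expectation is that $K$ is itself $\mu_H$-semistable, so that $\mumin(K) = -\tfrac12$, giving $h^2(K) = 0$ and hence $\ell = 0$ and $h^1(E) = 0$ simultaneously. The main obstacle is establishing this semistability of $K$: my plan is to combine the vanishing $h^0(K) = 0$ (forced by bijectivity of the evaluation on $H^0$) with the bound $\mumax(K) \leq 0$, and to show that any destabilizing quotient of $K$ of slope $\leq -2$ would, upon pulling back to $\OO^{r+a+b}$, produce a subsheaf containing $E^\vee$ whose associated modification of $E$ violates the strengthened bound of Corollary \ref{cor:strengthened_bound}.
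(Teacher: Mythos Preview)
Your inductive step for $\alpha \geq 2$ is correct: the $\kappa' = 0$ computation from the proof of Proposition~\ref{prop:small_deficiency_implies_gg_cdim_1} shows $E(-1,-1)$ is again maximal, and the restriction sequence then yields $H^1(E) = 0$ from $H^1(E(-1,-1)) = 0$ and $H^1(E|_C) = 0$. The $\mu = 0$ subcase of the base is also fine (the determinant of the evaluation $\OO^r \to E$ is a nowhere-vanishing section of $\OO$).

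The genuine gap is the base case $\mu \in (0,1)$. Your identity $h^1(E) = h^2(K) - \ell$ is correct, but you have not proved $h^2(K) = 0$. The facts $h^0(K) = 0$ and $\mumax(K) \leq 0$ say nothing about $\mumin(K)$, so you cannot conclude $\Hom(K,\OO(-2,-2)) = 0$ by semistability. The final sentence---that a destabilizing quotient of $K$ of slope $\leq -2$ ``pulls back'' to $\OO^{r+a+b}$ to produce ``a subsheaf containing $E^\vee$'' violating Corollary~\ref{cor:strengthened_bound}---is not an argument: a quotient of $K$ does not give a subsheaf of $\OO^{r+a+b}$, $E^\vee$ does not naturally live there, and no connection to the cited corollary is spelled out. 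Without this, the induction never starts. (A posteriori $K \cong \OO(-1,0)^a \oplus \OO(0,-1)^b$ once Theorem~\ref{thm:full_structure_thm_maximal_sheaves_with_vanishing_h1} is known, but you cannot use that here.)

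The paper bypasses induction with a uniform argument in the style of Le Potier. The $\kappa' = 0$ computation is valid for \emph{all} $\alpha \geq 1$ and gives $h^1(E(-1,-1)) = h^1(E)$. For \emph{every} $C \in |\OO(1,1)|$---including reducible $C = C_1 \cup C_2$, where a direct count gives $h^0(E|_C) = r + a + b$ and hence $H^1(E|_C) = 0$ by constancy of $\chi$---the restriction map $\varphi_C : H^1(E(-1,-1)) \to H^1(E)$ is surjective. These $\varphi_C$ assemble into a surjection of vector bundles
$$\OO_{\PP^3}(-1) \otimes H^1(E(-1,-1)) \onto \OO_{\PP^3} \otimes H^1(E)$$
on $\PP^3 = |\OO(1,1)|$, which (having equal ranks) must be an isomorphism; but $\OO_{\PP^3}(-1)^n \cong \OO_{\PP^3}^n$ forces $n = 0$. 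This handles your missing base case and everything else at once.
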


\begin{proof}
	The proof is along the same lines as \cite[Proposition 3.1]{LePotier80} and \cite[Proposition 4.7]{CHR25}.
	
	Let $c_1(E)=(a,b)$ in what follows. Note that $E$ is globally generated in codimension $1$ by Proposition \ref{prop:small_deficiency_implies_gg_cdim_1}. As in the proof of that proposition, we get a map $$\varphi_C:H^1(E(-1,-1))\to H^1(E)$$ of vector spaces coming from the long exact sequence $$0\to H^0(E(-1,-1))\to H^0(E)\to H^0(E|_C)\to H^1(E(-1,-1))\xrightarrow{\varphi_C} H^1(E)\to H^1(E|_C)$$ induced by the restriction sequence for each curve $C$ of class $(1,1)$, and $\varphi_C$ is surjective if $C$ is smooth (here we can say this for \emph{each} smooth $C$ instead of just a general $C$ because $E$ is a vector bundle that is globally generated in codimension $1$, so that $H^1(E|_C)=0$ if $C$ is a smooth rational curve). If $C\in|\OO(1,1)|$ is a union of two fibers $C_1\in|\OO(1,0)|$ and $C_2\in|\OO(0,1)|$, we have the following exact sequence, where $p$ denotes the point of intersection of $C_1$ and $C_2$, and the second map takes any tuple $(s_1,s_2)$ of sections to $s_1(p)-s_2(p)$. \begin{align*}
		0\to H^0(E|_C)&\to H^0(E|_{C_1})\oplus H^0(E|_{C_2})\to H^0(E|_p)\to0.
	\end{align*} Thus, $$h^0(E|_C)=h^0(E|_{C_1})+h^0(E|_{C_2})-r=(r+b)+(r+a)-r=r+a+b,$$ which agrees with $h^0(E|_{C'})$ for any smooth member $C'$ of the linear series $|\OO(1,1)|$. Since the Euler characteristic is constant in flat families, it follows that $H^1(E|_C)=0$, whence $\varphi_C$ is surjective. To summarize, we have shown that $\varphi_C$ is surjective for each $C\in|\OO(1,1)|$. These maps fit together to give us a surjective map $$\varphi:\OO_{\PP^3}(-1)\otimes H^1(E(-1,-1))\to \OO_{\PP^3}\otimes H^1(E)$$ of vector bundles on $\PP^3=|\OO_{\Quadric}(1,1)|$. Once again, from the proof of Proposition \ref{prop:small_deficiency_implies_gg_cdim_1} we know that $h^1(E(-1,-1))=h^1(E)$. Thus, $\varphi$ must be an isomorphism. This is a contradiction unless $h^1(E)=0$.
\end{proof}

\subsection{Steiner-like sheaves}

In this subsection we study maximal sheaves that have vanishing first cohomology (in particular, this includes maximal sheaves that are vector bundles, due to Proposition \ref{prop:vanishing_h1_vbundles}). We show that they must be \emph{balanced twisted Steiner-like} vector bundles.

We start with a definition motivated by the notion of Steiner sheaves on $\PP^2$ which are given as cokernels $$0\to \OO_{\PP^2}(-1)^a\to \OO_{\PP^2}^{a+r}\to E\to 0.$$ See \cite{Brambilla05,Brambilla08,Huizenga12} for extensive treatments of Steiner sheaves.

\begin{definition}\label{def:Steiner-like_sheaf}
	\mbox{ }\\[-15pt]
	\begin{enumerate}
		\item A torsion-free sheaf $E$ on $\Quadric$ is said to be a \emph{Steiner-like sheaf} if it fits in an exact sequence of the form $$0\to \OO(-1,0)^a\oplus\OO(0,-1)^b\to \OO^{r+a+b}\to E\to 0$$ for some non-negative integers $r,a,b$ with $r\geq1$. Such a sheaf $E$ has rank $r$, first Chern class $(a,b)$ and slope $\frac{a+b}{2r}$.
		\item A torsion-free sheaf $E$ on $\Quadric$ is said to be a \emph{twisted Steiner-like sheaf} if it fits in an exact sequence of the form $$0\to \OO(k-1,\ell)^m\oplus\OO(k,\ell-1)^n\to \OO(k,\ell)^{r+m+n}\to E\to 0$$ for some integers $k,\ell$ and some non-negative integers $r,m,n$ with $r\geq1$ and $m+n<2r$. Such a sheaf $E$ has rank $r$, first Chern class $r(k,\ell)+(m,n)$ and slope $\frac{k+\ell}{2}+\frac{m+n}{2r}$.
		\item A twisted Steiner-like sheaf $E$ is said to be \emph{balanced} if $k=\ell$ in the above definition, i.e., it fits in an exact sequence of the form $$0\to \OO(k-1,k)^m\oplus\OO(k,k-1)^n\to \OO(k,k)^{r+m+n}\to E\to 0$$ for some integer $k$ and some non-negative integers $r,m,n$ with $r\geq1$ and $m+n<2r$. Such a sheaf $E$ has rank $r$, first Chern class $r(k,k)+(m,n)$ and slope $k+\frac{m+n}{2r}$.
	\end{enumerate}
\end{definition}

We first perform some elementary computations on these sheaves.

\begin{lemma}\label{lem:basic_calculations_twisted_Steiners}
	Consider integers $m,n,r,k$ with $m,n\geq0$, $r\geq1$, $k\geq-1$ and $m+n<2r$, and a balanced twisted Steiner-like sheaf $E$ given as a cokernel $$0\to \OO(k-1,k)^m\oplus\OO(k,k-1)^n\to \OO(k,k)^{r+m+n}\to E\to 0.$$ Then $\alpha_{\mu}=k+1$ and $h^0(E)=\chi(E)=\beta_{r,\mu}$, where $\mu:=\mu_H(E)=k+\frac{m+n}{2r}$. Also, $h^1(E)=0$.
\end{lemma}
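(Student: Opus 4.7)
The plan is to extract everything from the long exact sequence in cohomology associated to the defining resolution, after recording the very simple cohomology of the three line bundles appearing in it.

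First I would dispatch $\alpha_\mu = k+1$. Since $0 \leq m+n < 2r$, we have $(m+n)/(2r) \in [0,1)$, so $\lfloor\mu\rfloor = k$ and hence $\alpha_\mu = k+1$.

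Next, I would record the cohomology of $\OO(k,k)$, $\OO(k-1,k)$, and $\OO(k,k-1)$ on $\Quadric$ using the K\"unneth formula
$$H^q(\OO_{\Quadric}(a,b)) \;=\; \bigoplus_{i+j=q} H^i(\OO_{\PP^1}(a)) \otimes H^j(\OO_{\PP^1}(b)).$$
Because $k\geq -1$, each of these three bundles has at least one coordinate $\geq -1$, which rules out nonzero $H^2$ (which would require both coordinates $\leq -2$) and nonzero $H^1$ (which would require one coordinate $\leq -2$ and the other $\geq 0$). The $h^0$'s are then $(k+1)^2$, $k(k+1)$, and $k(k+1)$ respectively, uniformly for $k\geq -1$ (with the convention that these expressions are $0$ when the relevant factors are nonpositive, which is exactly what K\"unneth predicts in those boundary cases).

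With this input, the long exact sequence in cohomology of the defining resolution collapses to yield $H^1(E) = H^2(E) = 0$ and
$$h^0(E) \;=\; (r+m+n)(k+1)^2 - (m+n)\,k(k+1) \;=\; (k+1)\bigl[r(k+1) + (m+n)\bigr].$$
Finally I would verify this matches $\beta_{r,\mu}$ algebraically:
$$\beta_{r,\mu} \;=\; r(k+1)\bigl(2\mu - (k+1) + 2\bigr) \;=\; r(k+1)\Bigl(k+1 + \tfrac{m+n}{r}\Bigr) \;=\; (k+1)\bigl[r(k+1) + (m+n)\bigr].$$
The identity $\chi(E) = h^0(E)$ is then automatic from the vanishing of $H^1(E)$ and $H^2(E)$. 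There is no genuine obstacle: the proof is bookkeeping with K\"unneth plus a one-line check against the definition of $\beta_{r,\mu}$.
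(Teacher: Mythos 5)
Your proposal is correct and follows essentially the same route as the paper: take the long exact sequence in cohomology of the defining resolution, use that $\OO(k,k)$, $\OO(k-1,k)$, $\OO(k,k-1)$ have no higher cohomology with $h^0$ equal to $(k+1)^2$, $k(k+1)$, $k(k+1)$, and match the resulting count with $\beta_{r,\mu}$. The only difference is that you spell out the K\"unneth bookkeeping and the algebraic identification with $\beta_{r,\mu}$, which the paper leaves as ``straightforward.''
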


\begin{proof}
	The $\mu$ and $\alpha_{\mu}$ computations are straightforward. Looking at the long exact sequence in cohomology induced by the given resolution of $E$, and recalling that $\OO(k-1,k)$, $\OO(k,k-1)$ and  $\OO(k,k)$ have vanishing higher cohomology, and $$h^0(\OO(k-1,k))=h^0(\OO(k,k-1))=k(k+1)$$ and $h^0(\OO(k,k))=(k+1)^2$, the other assertions follow.
\end{proof}

The semistability of sheaves (in particular, the balanced twisted Steiner-like sheaves) on $\Quadric$ is controlled by the Dr\'{e}zet-Le Potier-Rudakov surface. See \cite{Rudakov-Semistab,CH-Semistable_Hirzebruch} for details. In particular, by calculating the function $\operatorname{DLP}_{H,\mathcal{V}}$  (see \cite[Section 6]{CH-Semistable_Hirzebruch}) for rank $3$ exceptional characters $\mathcal{V}$, one finds that balanced twisted Steiner-like sheaves that are given by general resolutions and have discriminant larger than $5/9$ are $H$-Gieseker stable vector bundles, provided that the fractional part of both $k+(m/r)$ and $k+(n/r)$ is larger than $1/2$.

Lemma \ref{lem:basic_calculations_twisted_Steiners} shows that if semistable, then the balanced twisted Steiner-like sheaves are maximal in the sense of Definition \ref{def:deficiency_maximality}. We now investigate the converse and obtain a positive answer. In other words, we show that any maximal sheaf must be a balanced twisted Steiner-like vector bundle. We begin with a preliminary result.

\begin{proposition}\label{prop:max-sections_long_resolution}
	Let $E$ be a maximal sheaf on $\Quadric$ of rank $r\geq1$ and slope $\mu\geq0$. Suppose $H^1(E)=0$. Write $c_1(E)=(a,b)$ for some integers $a,b$. Then $E$ has a resolution of the form $$0\to \OO(-1,-1)^{\beta_{r,\mu-1}}\to \OO(-1,0)^{a+\beta_{r,\mu-1}}\oplus\OO(0,-1)^{b+\beta_{r,\mu-1}}\to \OO^{\beta_{r,\mu}}\to E\to 0.$$ In particular, $E$ is globally generated.
\end{proposition}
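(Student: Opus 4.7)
The plan is to apply a Beilinson-type spectral sequence coming from the Koszul resolution of the diagonal $\Delta \subset \Quadric \times \Quadric$. Since $\Quadric = \PP^1 \times \PP^1$, the diagonal is cut out by two equations (one per $\PP^1$-factor), and the tensor product of the two single-section Koszul complexes gives the locally free resolution
$$0 \to \OO(-1,-1)\boxtimes\OO(-1,-1) \to \bigl(\OO(-1,0)\boxtimes\OO(-1,0)\bigr)\oplus\bigl(\OO(0,-1)\boxtimes\OO(0,-1)\bigr) \to \OO\boxtimes\OO \to \OO_\Delta \to 0.$$
Tensoring with $\pi_1^*E$ (which preserves exactness by local freeness of each Koszul term) and applying $R\pi_{2*}$ yields a spectral sequence converging to $E$ in degree zero, whose $E_1$-page has three columns with row-$q$ entries $H^q(E(-1,-1))\otimes\OO(-1,-1)$, $H^q(E(-1,0))\otimes\OO(-1,0) \oplus H^q(E(0,-1))\otimes\OO(0,-1)$, and $H^q(E)\otimes\OO$. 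If the rows $q = 1, 2$ all vanish, the only remaining row collapses to the four-term exact complex stated in the proposition.

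The first step will be to verify all the needed cohomology vanishings. Each of $E$, $E(-1,-1)$, $E(-1,0)$, $E(0,-1)$ is $\mu_H$-semistable with slope strictly greater than $-2$, so Serre duality forces $h^2 = 0$ in each case. The hypothesis $h^1(E) = 0$ is given. For $E(-1,-1)$, Theorem \ref{thm:bound_sections} gives $h^0(E(-1,-1)) \leq \beta_{r,\mu-1}$ since $\mu - 1 \geq -1$, while the restriction sequence $0 \to E(-1,-1) \to E \to E|_C \to 0$ for $C \in |\OO(1,1)|$ combined with the hypothesis $\chi(E) = h^0(E) = \beta_{r,\mu}$ yields $\chi(E(-1,-1)) = \beta_{r,\mu} - r(1+2\mu) = \beta_{r,\mu-1}$; together with $h^2 = 0$, these force $h^1(E(-1,-1)) = 0$ and $h^0(E(-1,-1)) = \beta_{r,\mu-1}$.

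I expect the main obstacle to be the cohomology of the asymmetric twists $E(-1,0)$ and $E(0,-1)$, since here the bound from Theorem \ref{thm:bound_sections} is not sharp enough to force $h^1$-vanishing directly (for example, when $E = \OO(1,2)$, Theorem \ref{thm:bound_sections} gives only $h^0(E(-1,0)) \leq 4$ whereas we need equality at $3$). The workaround is to restrict to a general line. By Proposition \ref{prop:small_deficiency_implies_gg_cdim_1}, $E$ is globally generated in codimension $1$, so for a general $L_2 \in |\OO(0,1)|$ (avoiding the finite bad locus and any singularities of $E$), $E|_{L_2}$ is a globally generated vector bundle on $L_2 \cong \PP^1$ that splits as $\bigoplus\OO(d_i)$ with $d_i \geq 0$ and $\sum d_i = a$. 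Since $\OO(-1,0)|_{L_2} = \OO_{L_2}(-1)$, the restriction sequence
$$0 \to E(-1,-1) \to E(-1,0) \to E|_{L_2}(-1) \to 0$$
has right-hand term with $h^0 = a$ and $h^1 = 0$ by the non-negativity of the $d_i$; combining with the vanishings already obtained for $E(-1,-1)$ and the hypothesis $h^1(E) = 0$, the long exact sequence yields $h^0(E(-1,0)) = a + \beta_{r,\mu-1}$ and $h^1(E(-1,0)) = 0$. The entirely symmetric argument using $L_1 \in |\OO(1,0)|$ handles $E(0,-1)$, giving $h^0(E(0,-1)) = b + \beta_{r,\mu-1}$ and $h^1(E(0,-1)) = 0$.

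With all the required vanishings in hand, the Beilinson spectral sequence collapses at $E_1$ in the row $q = 0$ to produce the asserted resolution
$$0 \to \OO(-1,-1)^{\beta_{r,\mu-1}} \to \OO(-1,0)^{a+\beta_{r,\mu-1}}\oplus\OO(0,-1)^{b+\beta_{r,\mu-1}} \to \OO^{\beta_{r,\mu}} \to E \to 0,$$
with ranks coming from the computed dimensions. The ``in particular $E$ is globally generated'' statement is then immediate from the final surjection $\OO^{\beta_{r,\mu}}\twoheadrightarrow E$ exhibiting $E$ as a quotient of a trivial bundle.
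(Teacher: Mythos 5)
Your proposal is correct, and it rests on the same backbone as the paper's proof: the Beilinson spectral sequence on $\Quadric$ associated to the twists $\OO(-1,-1)$, $\OO(-1,0)$, $\OO(0,-1)$, $\OO$, together with the vanishing $h^2=0$ from semistability and the computation $h^0(E(-1,-1))=\beta_{r,\mu-1}$, $h^1(E(-1,-1))=0$. Where you genuinely diverge is the treatment of the asymmetric twists: the paper does \emph{not} prove $h^1(E(-1,0))=h^1(E(0,-1))=0$ in advance; instead it runs the spectral sequence with these numbers unknown, identifies the cokernel of the evaluation map $\OO^{\beta_{r,\mu}}\to E$ with $\OO(-1,0)^{h^1(E(-1,0))}\oplus\OO(0,-1)^{h^1(E(0,-1))}$, and concludes both exponents vanish because generic global generation (Lemma \ref{lem:small_deficiency_implies_gen_gg}) forces that cokernel to be torsion while it is visibly torsion-free. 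You instead establish the vanishings a priori by restricting to a general ruling: global generation in codimension $1$ (Proposition \ref{prop:small_deficiency_implies_gg_cdim_1}) gives $E|_{L_2}\cong\bigoplus\OO(d_i)$ with $d_i\geq0$, so $h^1(E|_{L_2}(-1))=0$, and the sequence $0\to E(-1,-1)\to E(-1,0)\to E|_{L_2}(-1)\to0$ kills $h^1(E(-1,0))$ and simultaneously pins down $h^0(E(-1,0))=a+\beta_{r,\mu-1}$ (symmetrically for $E(0,-1)$); this is exactly the right workaround, since as your $\OO(1,2)$ example shows, Theorem \ref{thm:bound_sections} alone cannot force these vanishings. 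The trade-off: your route makes the spectral sequence collapse purely formal and produces the exponents of the resolution directly from the long exact sequences, at the cost of invoking the stronger codimension-$1$ global generation statement (in fact generic global generation along a general ruling would already suffice), while the paper's route gets the vanishing ``for free'' from the spectral sequence output and only computes the exponents afterwards via first Chern classes. Both arguments use only results established before this proposition, so there is no circularity in your version.
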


\begin{proof}
	As in the proof of Proposition \ref{prop:small_deficiency_implies_gg_cdim_1} above, we can calculate $h^0(E(-1,-1))=\beta_{r,\mu-1}+\kappa'$, where $\kappa':=h^1(E(-1,-1))-h^1(E)=h^1(E(-1,-1))\geq0$. Due to Theorem \ref{thm:bound_sections}, $\kappa'=0$. Thus we have $h^0(E(-1,-1))=\beta_{r,\mu-1}$ and $h^1(E(-1,-1))=0$. Moreover, by Serre duality and $\mu_H$-semistability, $$h^2(E)=h^2(E(-1,0))=h^2(E(0,-1))=h^2(E(-1,-1))=0.$$ We have a Beilinson spectral sequence (\cite[Proposition 5.1]{Drezet(BeilinsonSpecSeqn)}) that converges to $E$ in degree $0$ and to $0$ elsewhere, and whose $E^{p,q}_1$-page takes the form $$\xymatrix{
		\OO(-1,-1)^{h^1(E(-1,-1))} \ar[r] & \OO(-1,0)^{h^1(E(-1,0))} \oplus \OO(0,-1)^{h^1(E(0,-1))}  \ar[r] & \OO^{h^1(E)}  \\
		\OO(-1,-1)^{h^0(E(-1,-1))} \ar[r] & \OO(-1,0)^{h^0(E(-1,0))} \oplus \OO(0,-1)^{h^0(E(0,-1))}  \ar[r] & \OO^{h^0(E)}. \\
	}$$ Plugging in the values of the known exponents, the $E^{p,q}_1$-page is $$\xymatrix{
		0 \ar[r] & \OO(-1,0)^{h^1(E(-1,0))} \oplus \OO(0,-1)^{h^1(E(0,-1))}  \ar[r] & 0  \\
		\OO(-1,-1)^{\beta_{r,\mu-1}} \ar[r] & \OO(-1,0)^{h^0(E(-1,0))} \oplus \OO(0,-1)^{h^0(E(0,-1))}  \ar[r] & \OO^{\beta_{r,\mu}}. \\
	}$$ Since the above spectral sequence converges to $E$, we obtain a canonical map $\OO^{\beta_{r,\mu}}\to E$ (which is just the canonical evaluation map) whose cokernel is the cokernel of the top left map shown above, which is just $$\OO(-1,0)^{h^1(E(-1,0))} \oplus \OO(0,-1)^{h^1(E(0,-1))}.$$ Since $E$ is generically globally generated (by Lemma \ref{lem:small_deficiency_implies_gen_gg}), this cokernel must be either zero or torsion. Thus, we must have $h^1(E(-1,0))=h^1(E(0,-1))=0$. From the convergence of the above spectral sequence to $E$, We get a resolution $$0\to \OO(-1,-1)^{\beta_{r,\mu-1}}\to \OO(-1,0)^{h^0(E(-1,0))}\oplus\OO(0,-1)^{h^0(E(0,-1))}\to \OO^{\beta_{r,\mu}}\to E\to 0$$ of $E$. Computing first Chern classes, we see that $$h^0(E(-1,0))=a+\beta_{r,\mu-1}\quad\mbox{ and }\quad h^0(E(0,-1))=b+\beta_{r,\mu-1}.$$
\end{proof}

As a corollary, we get the following result.

\begin{proposition}\label{prop:max-sections_are_twisted_Steiners}
	Let $E$ be a maximal sheaf on $\Quadric$ of rank $r\geq1$ and slope $\mu\geq0$. Suppose $H^1(E)=0$. Then $c_1(E)=r(\alpha-1,\alpha-1)+(m,n)$ for some non-negative integers $m$ and $n$, with $\alpha:=\alpha_{\mu}$ and $m+n<2r$. Moreover, $E$ is a balanced twisted Steiner-like sheaf having a resolution $$0\to \OO(\alpha-2,\alpha-1)^m\oplus\OO(\alpha-1,\alpha-2)^n\to\OO(\alpha-1,\alpha-1)^{r+m+n}\to E\to 0.$$
\end{proposition}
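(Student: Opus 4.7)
The plan is to reduce to the case $\alpha_\mu = 1$ by twisting down, apply Proposition \ref{prop:max-sections_long_resolution}, and then twist back up. Set $\alpha := \alpha_\mu$ and $E' := E(-\alpha+1,-\alpha+1)$; then $E'$ is $\mu_H$-semistable of rank $r$ with slope $\mu' := \mu - \alpha + 1 \in [0,1)$, so $\alpha_{\mu'} = 1$ and $\alpha_{\mu'-1} = 0$, giving $\beta_{r,\mu'-1} = 0$. When $\alpha = 1$ no twisting is needed, so I may assume $\alpha \geq 2$ for the reduction step.

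The first step is to show that $E'$ is itself maximal with $h^1(E') = 0$. This is obtained by iterating, for $k = 0, 1, \dots, \alpha-2$, the computation already embedded in the proof of Proposition \ref{prop:small_deficiency_implies_gg_cdim_1}: applied to the $\mu_H$-semistable maximal sheaf $F := E(-k,-k)$ (which has slope $\mu - k \geq 1$, and is generically globally generated by Lemma \ref{lem:small_deficiency_implies_gen_gg}), the restriction sequence for a general $C \in |\OO(1,1)|$ yields
\[ h^0(F(-1,-1)) = \beta_{r,\mu-k-1} + \kappa', \qquad \kappa' := h^1(F(-1,-1)) - h^1(F) \geq 0, \]
and Theorem \ref{thm:bound_sections} forces $\kappa' = 0$. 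Hence $F(-1,-1)$ is again maximal with the same $h^1$, which is zero by hypothesis. Induction then transfers both properties all the way down to $E'$.

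Next, I apply Proposition \ref{prop:max-sections_long_resolution} to $E'$. Writing $c_1(E') = (m,n)$, the vanishing $\beta_{r,\mu'-1} = 0$ collapses the long resolution to the two-term form
\[ 0 \to \OO(-1,0)^m \oplus \OO(0,-1)^n \to \OO^{r+m+n} \to E' \to 0. \]
The inequalities $m, n \geq 0$ follow because the deficiency of $E'$ is $0 < 1 = \alpha_{\mu'}^2$, so Lemma \ref{lem:small_deficiency_implies_gen_gg} gives generic global generation and then Lemma \ref{lem:ggg_implies_a,b_nonnegative} applies; the slope bound $\mu' < 1$ gives $m+n < 2r$ immediately.

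Finally, tensoring the resolution of $E'$ with $\OO(\alpha-1,\alpha-1)$ produces
\[ 0 \to \OO(\alpha-2,\alpha-1)^m \oplus \OO(\alpha-1,\alpha-2)^n \to \OO(\alpha-1,\alpha-1)^{r+m+n} \to E \to 0, \]
and the identity $c_1(E) = c_1(E') + r(\alpha-1,\alpha-1) = r(\alpha-1,\alpha-1) + (m,n)$ delivers the stated form of the first Chern class. The only real subtlety is ensuring the hypotheses of Theorem \ref{thm:bound_sections} and Proposition \ref{prop:max-sections_long_resolution} persist along the twist chain, but since the slopes encountered all lie in $[0,\mu]$, this is automatic; the rest is bookkeeping built on top of the previous proposition.
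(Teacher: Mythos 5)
Your proof is correct and is essentially the paper's own argument: the paper proceeds by induction on $\alpha$, with the base case $\alpha=1$ handled by Proposition \ref{prop:max-sections_long_resolution} and the inductive step supplied by exactly the maximality- and $h^1$-transfer computation from the proof of Proposition \ref{prop:small_deficiency_implies_gg_cdim_1} that you iterate. You have merely unrolled that induction, twisting down to slope in $[0,1)$, applying the collapsed long resolution there, and twisting back up, so the two proofs coincide in substance.
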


\begin{proof}
	We shall induct on $\alpha$. Write $c_1(E)=(a,b)$ for some integers $a$ and $b$.
	
	First suppose $\alpha=1$. Noting that $\beta_{r,\mu}=r+a+b$ and $\beta_{r,\mu-1}=0$, Proposition \ref{prop:max-sections_long_resolution} gives us a resolution $$0\to \OO(-1,0)^a\oplus\OO(0,-1)^b\to \OO^{r+a+b}\to E\to 0,$$ thus proving that $E$ is a Steiner-like sheaf. Moreover, $c_1(E)=(m,n)+r(\alpha-1,\alpha-1)$ for $m:=a\geq0$ and $n:=b\geq0$. Note that $$\frac{m+n}{2r}=\frac{a+b}{2r}=\mu<1.$$
	
	Now suppose $\alpha>1$. We have shown while proving Proposition \ref{prop:small_deficiency_implies_gg_cdim_1} that $E(-1,-1)$ is also a maximal sheaf. Moreover, in that same proof, we also showed that $\kappa'=0$, where $\kappa':=h^1(E(-1,-1))-h^1(E)$. Thus, $h^1(E(-1,-1))=0$. By induction on $\alpha$, it follows that $c_1(E(-1,-1))=(m,n)+r(\alpha-2,\alpha-2)$ for some non-negative integers $m$ and $n$ with $m+n<2r$, and there is an exact sequence $$0\to \OO(\alpha-3,\alpha-2)^m\oplus\OO(\alpha-2,\alpha-3)^n\to \OO(\alpha-2,\alpha-2)^{r+m+n}\to E(-1,-1)\to 0.$$ Twisting by $\OO(1,1)$ we get the exact sequence $$0\to \OO(\alpha-2,\alpha-1)^m\oplus\OO(\alpha-1,\alpha-2)^n\to\OO(\alpha-1,\alpha-1)^{r+m+n}\to E\to 0.$$ Clearly, $c_1(E)=(m,n)+r(\alpha-1,\alpha-1)$.
\end{proof}

\subsection{Local freeness}\label{subsec:local_freeness_of_maximals}

We now show that the assumption of $E$ being a vector bundle in Proposition \ref{prop:vanishing_h1_vbundles} is not really necessary, as any maximal sheaf is in fact a vector bundle. We start by proving a more general result.

\begin{proposition}\label{prop:maximal_section_with_vanishing_h1_is_vbundle}
	Let $E$ be a $\mu_H$-semistable sheaf on $\Quadric$ of rank $r\geq1$ and slope $\mu\geq-1$. Suppose $H^1(E)=0$. Then the number of points where $E$ is singular (i.e., fails to be locally free) is at most the deficiency of $E$. In particular, if $E$ is a maximal sheaf with vanishing $H^1$, then $E$ is a vector bundle.
\end{proposition}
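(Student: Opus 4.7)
The plan is to compare $E$ with its double dual. Since $\Quadric$ is a smooth surface, the reflexive sheaf $E^{**}$ is automatically locally free, and the natural inclusion $E \hookrightarrow E^{**}$ fits into a short exact sequence
$$0 \to E \to E^{**} \to Q \to 0,$$
where $Q := E^{**}/E$ is a torsion sheaf whose support is exactly the singular locus of $E$. The key point is that $\ell(Q)$ is at least the number of singular points of $E$, since each such point contributes length at least $1$ to $Q$.

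First I would check that $E^{**}$ remains $\mu_H$-semistable of the same rank $r$ and slope $\mu$. Indeed, for any subsheaf $F \subset E^{**}$, the intersection $F \cap E$ inside $E^{**}$ satisfies $F/(F\cap E) \hookrightarrow Q$, so $F/(F\cap E)$ is $0$-dimensional. Hence $F$ and $F\cap E$ have the same rank and the same first Chern class, so also the same slope; thus a destabilizing subsheaf of $E^{**}$ would pull back to a destabilizing subsheaf of $E$, a contradiction. In particular $\mumax(E^{**}) = \mu \geq -1$.

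Next I would take the long exact sequence in cohomology of the short exact sequence above. Using the hypothesis $H^1(E) = 0$, this gives
$$0 \to H^0(E) \to H^0(E^{**}) \to H^0(Q) \to 0,$$
and since $Q$ is $0$-dimensional, $h^0(Q) = \ell(Q)$. Therefore
$$h^0(E^{**}) = h^0(E) + \ell(Q).$$
Applying Theorem \ref{thm:bound_sections} to the $\mu_H$-semistable sheaf $E^{**}$ gives $h^0(E^{**}) \leq \beta_{r,\mu}$, and combining yields
$$\ell(Q) \leq \beta_{r,\mu} - h^0(E) = \delta,$$
where $\delta$ is the deficiency of $E$. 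Since the number of singular points of $E$ is at most $\ell(Q)$, this proves the first claim. For the second claim, if $E$ is maximal then $\delta = 0$, forcing $Q = 0$, so $E = E^{**}$ is reflexive, hence locally free on the smooth surface $\Quadric$.

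There is no serious obstacle: the whole argument is a one-line comparison with the reflexive hull, with the only points requiring care being the preservation of $\mu_H$-semistability under double-dualization (handled by the intersection argument above) and the applicability of Theorem \ref{thm:bound_sections} to $E^{**}$ (which holds because $\mumax(E^{**}) = \mu \geq -1$).
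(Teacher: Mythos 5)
Your argument is correct, and it is a genuinely more direct route than the one in the paper, although it rests on the same three ingredients (the reflexive hull, the hypothesis $H^1(E)=0$, and the section bound of Theorem \ref{thm:bound_sections}). The paper proceeds by induction on $\length(E^{**}/E)$: it peels off one point of the singular locus at a time via elementary modifications $0\to E\to E'\to \OO_p\to 0$ inside $E^{**}$, uses $H^1(E)=0$ to show each step raises $h^0$ by exactly one and hence lowers the deficiency by one, and concludes the process must terminate after at most $\delta$ steps. You instead compare $E$ with $E^{**}$ in a single step: the exact sequence $0\to H^0(E)\to H^0(E^{**})\to H^0(Q)\to 0$ gives $h^0(E^{**})=h^0(E)+\length(Q)$, and one application of Theorem \ref{thm:bound_sections} to $E^{**}$ (legitimate since, as you verify, $\mumax(E^{**})=\mu\geq -1$ because every subsheaf of $E^{**}$ has the same rank and $c_1$ as its intersection with $E$) yields $\length(Q)\leq\beta_{r,\mu}-h^0(E)=\delta$. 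Your version is shorter and makes explicit the slope-semistability of the reflexive hull, which the paper only asserts (for its intermediate sheaves $E'$) without proof; the paper's one-point-at-a-time machinery, on the other hand, is not wasted, since essentially the same elementary-modification setup is reused in Proposition \ref{prop:maximal_section_with_gen_gg_is_vbundle}, where $H^1(E)=0$ is not available at the outset and the stepwise structure (together with global generation of the modified sheaf) is genuinely needed.
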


\begin{proof}
	If $E$ is a vector bundle, we have nothing to prove. So, suppose $E$ is not a vector bundle and thus has some singularities. Let $E^{**}$ be the double dual of $E$, and let $T$ be the cokernel of the inclusion $E\hookrightarrow E^{**}$. Then $T$ is non-zero, and has zero-dimensional support. We shall induct on the length of $T$. There is an exact sequence $$0\to \OO_p\to T\xrightarrow{\varphi} T'\to 0,$$ where $p$ is a singularity of $E$. Let $E'$ be the kernel of the composition $E^{**}\to T\xrightarrow{\varphi} T'$.  It is again $\mu_H$-semistable, and it has the same double dual as $E$, i.e., $(E')^{**} = E^{**}$. There is also a natural inclusion $i:E\hookrightarrow E'$.  Using the snake lemma on the commutative diagram $$\xymatrix{0\ar[r]&E\ar[r]\ar[d]_{i} & E^{**} \ar[r]\ar@{=}[d]& T \ar[d]_{\varphi} \ar[r]& 0\\
		0\ar[r]&E'\ar[r] & E^{**} \ar[r] & T'\ar[r]& 0
	}
	$$ with exact rows, we get an exact sequence $$0\to \ker\varphi\to \coker i\to 0.$$ This shows that $\coker i\cong\OO_p$ and thus there is an exact sequence $$0\to E\xrightarrow{i} E'\to \OO_p\to 0.$$  It induces the exact sequence $$0\to H^0(E)\to H^0(E')\to H^0(\OO_p)\to 0$$ since $H^1(E)=0$ by assumption. The rank-nullity theorem shows that $$h^0(E')=h^0(E)+1=(\beta_{r,\mu}-\delta)+1=\beta_{r,\mu}-(\delta-1),$$ where $\delta$ is the deficiency of $E$. Since $E'$ has the same rank and slope as $E$, this means that the deficiency of $E'$ is one less than that of $E$. Note that $H^1(E')$ clearly vanishes since $H^1(E)$ and $H^1(\OO_p)$ do. Since $\length(T')=\length(T)-1$, the exact sequence $$0\to E'\to (E')^{**}\to T'\to 0$$ shows that we can proceed by induction. Thus, if $E'$ is not a vector bundle, we can repeat this process to resolve one singularity (counting multiplicities) of $E'$ and obtain a $\mu_H$-semistable sheaf satisfying the hypotheses of this proposition but having deficiency one less than $E'$, and so on. This process cannot go on further once we have obtained a sheaf of deficiency $0$. It follows that $\length(T)\leq\delta$.
\end{proof}

We are now in a position to show that maximal sheaves are always vector bundles. In other words, we now drop the cohomology vanishing hypothesis made above.

\begin{proposition}\label{prop:maximal_section_with_gen_gg_is_vbundle}
	Any maximal sheaf on $\Quadric$ of rank $r\geq1$ and slope $\mu\geq0$ is a vector bundle.
\end{proposition}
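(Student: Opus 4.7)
The plan is to bootstrap from the locally free case already handled in Proposition \ref{prop:vanishing_h1_vbundles} by passing to the double dual. On a smooth surface, $E^{**}$ is reflexive and therefore locally free, so if I can show $E^{**}$ is still maximal with the same invariants, I can apply the vector bundle results to it and then deduce $E = E^{**}$.

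First, I would verify that $E^{**}$ is a maximal sheaf. There is a natural injection $E\hookrightarrow E^{**}$ whose cokernel is a zero-dimensional torsion sheaf, so $E^{**}$ has the same rank, first Chern class, and slope $\mu\geq 0$ as $E$, and it remains $\mu_H$-semistable (the double dual of a torsion-free slope-semistable sheaf is slope-semistable since any subsheaf of $E^{**}$ has the same rank and first Chern class as its intersection with $E$). The injection gives $h^0(E^{**})\geq h^0(E)=\beta_{r,\mu}$, while Theorem \ref{thm:bound_sections} gives the reverse inequality, so $h^0(E^{**})=\beta_{r,\mu}$ and $E^{**}$ is maximal.

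Next, since $E^{**}$ is reflexive on the smooth surface $\Quadric$, it is locally free. Thus Proposition \ref{prop:vanishing_h1_vbundles} yields $H^1(E^{**})=0$, and then Proposition \ref{prop:max-sections_long_resolution} applies to give that $E^{**}$ is globally generated, i.e., $\ev_{E^{**}}\colon H^0(E^{**})\otimes\OO \twoheadrightarrow E^{**}$ is surjective. On the other hand, the natural map $H^0(E)\hookrightarrow H^0(E^{**})$ is an injection between vector spaces both of dimension $\beta_{r,\mu}$, hence an isomorphism. Consequently $\ev_{E^{**}}$ factors as
\[
H^0(E^{**})\otimes\OO \;=\; H^0(E)\otimes\OO \;\xrightarrow{\ev_E}\; E \;\hookrightarrow\; E^{**},
\]
and since the composite is surjective, the inclusion $E\hookrightarrow E^{**}$ is surjective as well. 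Therefore $E=E^{**}$ is a vector bundle.

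I do not anticipate a serious obstacle. The only step that requires a small check is that slope-semistability is preserved under reflexive hull, and that $\mumax(E^{**})=\mu\geq 0 \geq -1$ so that Theorem \ref{thm:bound_sections} is applicable in Step 1. The key conceptual point is avoiding any direct attempt to prove $H^1(E)=0$ for non-locally-free $E$ (where the fiberwise surjectivity argument of Proposition \ref{prop:vanishing_h1_vbundles} breaks down over singular points of $E$); instead, maximality is transferred to $E^{**}$, where the previous results apply, and then pushed back to $E$ via a comparison of evaluation maps.
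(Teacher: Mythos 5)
Your argument is correct, but it takes a genuinely different route from the paper's. The paper proves the proposition by a minimal-counterexample induction: it resolves one singular point at a time via an elementary modification $0\to E\to E'\to \OO_p\to 0$, shows $E'$ is again maximal and (by minimality) locally free, uses global generation of $E'$ (Proposition \ref{prop:max-sections_long_resolution}) to get surjectivity of $H^0(E')\to H^0(\OO_p)$ and hence $H^1(E)=0$, and then invokes Proposition \ref{prop:maximal_section_with_vanishing_h1_is_vbundle}, which bounds the number of singular points of a semistable sheaf with vanishing $H^1$ by its deficiency. You instead pass directly to the reflexive hull $E^{**}$: you check it is maximal (same rank, $c_1$ and slope since $E^{**}/E$ has zero-dimensional support, semistability preserved, and $h^0(E^{**})=\beta_{r,\mu}$ by combining the injection $H^0(E)\hookrightarrow H^0(E^{**})$ with Theorem \ref{thm:bound_sections} applied with $\mumax(E^{**})=\mu\geq0$), note that reflexive sheaves on a smooth surface are locally free, apply Propositions \ref{prop:vanishing_h1_vbundles} and \ref{prop:max-sections_long_resolution} to get global generation of $E^{**}$, and conclude $E=E^{**}$ by factoring the evaluation map of $E^{**}$ through $E$ via the isomorphism $H^0(E)\cong H^0(E^{**})$. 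This is a clean shortcut: it avoids the minimal-counterexample bookkeeping, never needs $H^1(E)=0$ for the possibly singular $E$ (that vanishing follows a posteriori once $E$ is a bundle), and makes no use of Proposition \ref{prop:maximal_section_with_vanishing_h1_is_vbundle} at all. What the paper's route buys is precisely that auxiliary result, which is more general (it controls the singularity length of any semistable sheaf with vanishing $H^1$ in terms of its deficiency) and is of independent interest beyond the maximal case; all the small verifications in your outline (semistability of the reflexive hull, applicability of Theorem \ref{thm:bound_sections}, and the evaluation-map factorization) do hold as stated.
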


\begin{proof}
	Suppose the assertion is false. Then choose a maximal sheaf $E$ satisfying the conditions of this proposition such that $E$ is not a vector bundle, and has the minimal number of singularities (counting multiplicities) among all sheaves that satisfy the hypotheses of this proposition but are not vector bundles. Let $E^{**}$ be the double dual of $E$, and let $T$ be the cokernel of the inclusion $E\hookrightarrow E^{**}$. Then $T$ is non-zero, and has zero-dimensional support. Let $p, T'$ and $E'$ be as in the proof of Proposition \ref{prop:maximal_section_with_vanishing_h1_is_vbundle} above, and we have an exact sequence \begin{equation}\label{eq:short_exact_sequence_01}
		0\to E\to E'\to \OO_p\to 0.
	\end{equation} Thus, $h^0(E') \geq h^0(E)$. Since $E'$ has the same rank and slope as $E$, it follows that $E'$ is also a maximal sheaf. Now the exact sequence $$0\to E'\to (E')^{**}\to T'\to 0$$ shows us that $E'$ has a smaller number of singularities (counting multiplicities) than $E$. The way that $E$ was chosen, it follows that $E'$ must be a vector bundle. Proposition \ref{prop:vanishing_h1_vbundles} tells us that $H^1(E')=0$. Also, Proposition \ref{prop:max-sections_long_resolution} implies that $E'$ is globally generated, whence the map $H^0(E')\rightarrow H^0(\OO_p)$ is surjective. The long exact sequence $$0\to H^0(E)\to H^0(E')\to H^0(\OO_p)\to H^1(E)\to 0$$ induced by \eqref{eq:short_exact_sequence_01} then shows that $H^1(E)=0$. Proposition \ref{prop:maximal_section_with_vanishing_h1_is_vbundle} now leads to a contradiction.
\end{proof}

We can now complete the proof of Theorem \ref{thm:full_structure_thm_maximal_sheaves_with_vanishing_h1}.

\begin{proof}[Proof of Theorem \ref{thm:full_structure_thm_maximal_sheaves_with_vanishing_h1}]
	Clear from Propositions \ref{prop:maximal_section_with_gen_gg_is_vbundle},  \ref{prop:vanishing_h1_vbundles}, \ref{prop:max-sections_are_twisted_Steiners} and \ref{prop:max-sections_long_resolution}.
\end{proof}

We also have the following corollary regarding Brill-Noether loci, showing that $B^{\beta_{r,\mu}}$ is either the entire moduli space, or empty. Furthermore, if $\chi=\beta_{r,\mu}$, the Brill-Noether loci are either all of the moduli space, or empty.

\begin{corollary}\label{cor:BN_loci}
	Suppose $M:=M_{\Quadric,H}(r,c_1,\chi)$ is a nonempty moduli space of $H$-Gieseker semistable sheaves on $\Quadric$ of rank $r\geq1$, first Chern class $c_1$ and Euler characteristic $\chi$, with $$\mu:=\frac{c_1\cdot H}{rH^2}\geq0.$$ Let $\beta:=\beta_{r,\mu}$. Then the Brill-Noether locus $B^{\beta}\subseteq M$ is nonempty if and only if $\chi=\beta$. In the case that $\chi=\beta$, the locus $B^k$ is all of $M$ if and only if $k\leq\beta$, and is empty otherwise.
\end{corollary}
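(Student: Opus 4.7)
The plan is to combine the universal upper bound from Theorem \ref{thm:bound_sections} with the cohomology vanishing obtained in Theorem \ref{thm:full_structure_thm_maximal_sheaves_with_vanishing_h1}. The central observation I will use throughout is that for every $\mu_H$-semistable sheaf $E\in M$, since $\mu(E)=\mu\geq0>-2$, Serre duality together with semistability gives $h^2(E)=0$. Consequently $\chi(E)=h^0(E)-h^1(E)$, and combined with $h^0(E)\leq\beta$ from Theorem \ref{thm:bound_sections} this yields the a priori bound $\chi\leq\beta$.

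For the first equivalence, suppose $B^\beta\neq\emptyset$. By definition there exists a stable $E\in M$ with $h^0(E)\geq\beta$, and Theorem \ref{thm:bound_sections} forces $h^0(E)=\beta$, so $E$ is maximal. Theorem \ref{thm:full_structure_thm_maximal_sheaves_with_vanishing_h1} then gives $h^1(E)=h^2(E)=0$, whence $\chi=\chi(E)=\beta$. Conversely, suppose $\chi=\beta$. For any stable $E\in M$, the identity $h^0(E)=\chi(E)+h^1(E)=\beta+h^1(E)\geq\beta$ together with Theorem \ref{thm:bound_sections} pins down $h^0(E)=\beta$ (and as a byproduct $h^1(E)=0$). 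Thus the entire stable locus $M^s$ is contained in the locus $\{E:h^0(E)\geq\beta\}$, so $B^\beta\supseteq\overline{M^s}$ is nonempty.

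For the second assertion, assume $\chi=\beta$. The computation above shows that every stable $E\in M$ satisfies $h^0(E)=\beta$, so for $k\leq\beta$ the locus of stable sheaves with $h^0\geq k$ coincides with $M^s$, while for $k>\beta$ it is empty by Theorem \ref{thm:bound_sections}. Since $M$ is an irreducible projective variety (\S\ref{sec-prelim}) and $M^s$ is open, $M^s$ is dense whenever nonempty, so $B^k=\overline{M^s}=M$ for $k\leq\beta$, and $B^k=\emptyset$ for $k>\beta$.

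The main point of care is the edge case $M^s=\emptyset$, in which the definition of $B^k$ as a closure of the stable locus would trivially return $\emptyset$. Under the hypothesis $\chi=\beta$ one expects $M^s\neq\emptyset$: the balanced twisted Steiner-like resolutions produced by Theorem \ref{thm:full_structure_thm_maximal_sheaves_with_vanishing_h1} describe general sheaves in $M$ (Gaeta) and, by the Dr\'ezet--Le Potier--Rudakov analysis invoked after Lemma \ref{lem:basic_calculations_twisted_Steiners}, yield stable bundles under mild numerical hypotheses. I expect this to be the only delicate step; otherwise the corollary follows directly from the two preceding theorems combined with the vanishing of $h^2$.
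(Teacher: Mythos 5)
Your argument is essentially the paper's: both directions rest on $h^2(E)=0$ coming from semistability and $\mu\geq0$, the bound $h^0(E)\leq\beta$ from Theorem \ref{thm:bound_sections}, and the vanishing $h^1(E)=0$ for maximal sheaves (the paper cites Propositions \ref{prop:vanishing_h1_vbundles} and \ref{prop:maximal_section_with_gen_gg_is_vbundle} directly, while you cite Theorem \ref{thm:full_structure_thm_maximal_sheaves_with_vanishing_h1}, which packages the same statements). The forward implication is identical to the paper's.

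The one divergence is in the converse, and there the step you flag as delicate cannot be repaired in the way you suggest. The paper takes an \emph{arbitrary} $E\in M$, notes $h^0(E)\geq\chi(E)=\beta$ and $h^0(E)\leq\beta$, and concludes $B^{\beta}=M$; it never routes through the stable locus, i.e.\ it implicitly reads $B^k$ as the locus of all sheaves in $M$ with $h^0\geq k$, so the question of whether $M^s\neq\emptyset$ never arises. You instead take the definition ``closure of the locus of \emph{stable} sheaves with $h^0\geq k$'' literally and then hope that $\chi=\beta$ forces $M^s\neq\emptyset$ via Gaeta/DLP considerations. That expectation is false in general: for the character $(r,c_1,\chi)=(2,(0,0),2)$ one has $\mu=0$ and $\beta_{2,0}=2=\chi$, and $M$ is nonempty, consisting of the single strictly semistable point $[\OO\oplus\OO]$; indeed any stable $E$ with this character would satisfy $h^0(E)=0$ (a section gives $\OO\subset E$ with equal reduced Hilbert polynomial) and $h^2(E)=0$, hence $\chi(E)\leq0\neq2$, so $M^s=\emptyset$. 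Thus your completeness argument for $B^{\beta}\supseteq\overline{M^s}$ cannot be finished as proposed when $M^s=\emptyset$; the resolution is the one the paper (implicitly) adopts, namely to argue with every semistable $E\in M$ --- where $h^2(E)=0$ and $\chi=\beta$ give $h^0(E)\geq\beta$, and Theorem \ref{thm:bound_sections} gives $h^0(E)=\beta$ --- so that the locus $\{h^0\geq k\}$ is all of $M$ for $k\leq\beta$ and empty for $k>\beta$, or else to build the convention for strictly semistable moduli into the definition of $B^k$. Apart from this point of interpretation, your proof matches the paper's.
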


\begin{proof}
	Suppose $B^{\beta}$ is nonempty. If $E$ is an element of this locus, then $h^0(E)=\beta$. Also, $h^1(E)=0$ by Propositions \ref{prop:vanishing_h1_vbundles} and \ref{prop:maximal_section_with_gen_gg_is_vbundle}. Of course, $h^2(E)=0$ since $\mu(E)\geq0$. Thus, $\chi=\chi(E)=\beta$.
	
	Conversely, suppose $\chi=\beta$, and let $E$ be any element of $M$. Since $h^2(E)=0$, it follows that $h^0(E)\geq\chi(E)=\beta$. So $B^{\beta}=M$ (whence $B^k=M$ for any $k\leq\beta$). Since $h^0(E)\leq\beta$, it is obvious that $B^k$ is empty for $k\geq\beta+1$.
\end{proof}

\section{Investigating maximality - unbalanced case}\label{sec:maximal_unbalanced}

We now study sheaves that attain the bound given by Theorem \ref{thm:better_bound_sections_stratified}. We find that such sheaves have good global generation properties, and are often just twisted Steiner-like bundles. Before that, we quickly demonstrate the converse, i.e., that twisted Steiner-like bundles achieve the bound.

\begin{example}
	Consider a balanced twisted Steiner-like bundle $S$ on $\Quadric$, with $m,n,r,k$ as in Definition \ref{def:Steiner-like_sheaf}(iii). Assume $k\geq0$, and $0\leq n-m<r$. Let $\ell$ be a non-negative integer, and define $E:=S(0,\ell)$. Then $\mu_H(E)=\mu_H(S)+\ell/2$, i.e., $$\mu':=\mu_H(S)=\mu_H(E)-\frac{(\ell+1)}{2}+\frac{1}{2}.$$ Note that the ``$j$'' appearing in Theorem \ref{thm:better_bound_sections_stratified} for the sheaf $E$ is equal to $\ell$. There is an exact sequence $$0\to \OO(k-1,k+\ell)^m\oplus\OO(k,k-1+\ell)^n\to \OO(k,k+\ell)^{r+m+n}\to E\to 0,$$ which shows that $$h^0(E)=(r+m+n)(k+1)(k+\ell+1)-mk(k+\ell+1)-n(k+1)(k+\ell)=\beta_{r,\mu'}+\ell(r+(rk+m)).$$ Hence, $h^0(E)$ equals the bound in Theorem \ref{thm:better_bound_sections_stratified}.
\end{example}

\begin{theorem}\label{thm:strongly_maximal_structure}
	Let $E$ be a $\mu_H$-semistable sheaf of rank $r\geq1$ on $\Quadric$ with $c_1(E)=(a,b)$. Without loss of generality, assume $b\geq a$. Suppose $E$ is generically globally generated, and $j$ is the largest integer with $$0\leq j\leq \Bigg\lfloor\frac{b-a}{r}\Bigg\rfloor$$ for which $E\!\left(0,-j\right)$ is generically globally generated. Let $$\mu'':=\mu_H(E)-\frac{(j+1)}{2},$$ and suppose that $$h^0(E)=\beta_{r,\mu''+1/2}+j(r+a).$$ Then $h^1(E)=h^2(E)=0$, and $E$ is a globally generated vector bundle with $E\cong S(0,j)$ for some balanced twisted Steiner-like bundle $S$.
\end{theorem}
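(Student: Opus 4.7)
The plan is to identify $S := E(0,-j)$ as a maximal sheaf in the sense of Definition \ref{def:deficiency_maximality}, apply Theorem \ref{thm:full_structure_thm_maximal_sheaves_with_vanishing_h1} to it, and then propagate everything back to $E$ via the twist by $\OO(0,j)$.

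First I would observe that since $E(0,-j)$ is generically globally generated and $\OO(0,k)$ is globally generated for each $k \geq 0$, each intermediate twist $E(0,-i)$ with $0 \leq i \leq j$ is generically globally generated. Therefore the chain of inequalities
\[
h^0(E) \leq h^0(E(0,-1)) + (r+a) \leq \cdots \leq h^0(E(0,-j)) + j(r+a) \leq \beta_{r,\mu''+1/2} + j(r+a)
\]
from the proof of Theorem \ref{thm:better_bound_sections_stratified} is available, and the numerical hypothesis forces equality at every step. In particular $h^0(S) = \beta_{r,\mu''+1/2}$. Since $S$ is $\mu_H$-semistable (as $E$ is) and has slope $\mu''+1/2 \geq 0$ (Lemma \ref{lem:ggg_implies_a,b_nonnegative}), $S$ is maximal, and Theorem \ref{thm:full_structure_thm_maximal_sheaves_with_vanishing_h1} gives that $S$ is a globally generated, balanced twisted Steiner-like vector bundle with $h^1(S) = h^2(S) = 0$. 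Tensoring with the globally generated line bundle $\OO(0,j)$ then shows $E \cong S(0,j)$ is a globally generated vector bundle, and Serre duality together with $\mu_H$-semistability of $E$ (whose slope is $\geq 0 > -2$) gives $h^2(E) = 0$.

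For $h^1(E) = 0$ I would propagate the vanishing of $h^1(S)$ along the same chain. Choose a general line $L$ of class $(0,1)$ avoiding the singular loci of every $E(0,-i)$ and along which each $E(0,-i)$ is globally generated at a general point. Then for each $0 \leq i \leq j-1$, $E(0,-i)|_L$ splits on $L \cong \PP^1$ as a sum of non-negative line bundles, so $h^1(E(0,-i)|_L) = 0$ and $h^0(E(0,-i)|_L) = r+a$. The long exact sequence
\[
0 \to H^0(E(0,-i-1)) \to H^0(E(0,-i)) \to H^0(E(0,-i)|_L) \to H^1(E(0,-i-1)) \to H^1(E(0,-i)) \to 0
\]
combined with the forced equality $h^0(E(0,-i)) = h^0(E(0,-i-1)) + (r+a)$ makes the third arrow surjective, so the connecting map vanishes and $H^1(E(0,-i-1)) \xrightarrow{\sim} H^1(E(0,-i))$. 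Iterating from $i=0$ to $i=j-1$ yields $H^1(E) \cong H^1(S) = 0$.

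The main conceptual observation is that the single equality $h^0(E) = \beta_{r,\mu''+1/2} + j(r+a)$ is the \emph{smaller} of the two bounds in Theorem \ref{thm:better_bound_sections_stratified}, and it simultaneously forces maximality of $S$ and surjectivity of every intermediate restriction-on-sections map; this is the rigidity that allows cohomology vanishing and the Steiner-like structure to lift from $S$ back to $E$. Once this observation is made, the rest is routine bookkeeping, so I expect no significant technical obstacle.
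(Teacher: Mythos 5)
Your proposal is correct and follows essentially the same route as the paper: force equality in every step of the chain from the proof of Theorem \ref{thm:better_bound_sections_stratified}, conclude $h^0(E(0,-j))=\beta_{r,\mu''+1/2}$ with $\mu_H(E(0,-j))=\mu''+1/2\geq 0$ by Lemma \ref{lem:ggg_implies_a,b_nonnegative}, and apply Theorem \ref{thm:full_structure_thm_maximal_sheaves_with_vanishing_h1} to identify $S=E(0,-j)$ as a balanced twisted Steiner-like bundle. The only (harmless) deviation is at the end: the paper reads off $h^1(E)=h^2(E)=0$ by twisting the Steiner-like resolution of $S$ by $\OO(0,j)$, while you obtain $h^2(E)=0$ from Serre duality and propagate $h^1(S)=0$ back to $h^1(E)=0$ through the restriction sequences, using that the forced equalities make the restriction maps on sections surjective; both arguments are immediate and valid.
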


\begin{proof}
	From the proof of Theorem \ref{thm:better_bound_sections_stratified}, we see that we must have equalities \begin{align*}
		h^0(E)&=h^0(E(0,-1))+r+a\\
		&=h^0(E(0,-2))+2(r+a)\\
		&=\cdots\\
		&=h^0\!\left(E\!\left(0,-j\right)\right)+j(r+a)\\
		&=\beta_{r,\mu''+1/2}+j(r+a).\end{align*}
	So $h^0\left(E\!\left(0,-j\right)\right)=\beta_{r,\mu''+1/2}$. Since $E\!\left(0,-j\right)$ is generically globally generated, $\mu(E\!\left(0,-j\right))\geq0$ by Lemma \ref{lem:ggg_implies_a,b_nonnegative}. Theorem \ref{thm:full_structure_thm_maximal_sheaves_with_vanishing_h1} then shows that $E\!\left(0,-j\right)$ is a balanced twisted Steiner-like bundle, and the vanishing of higher cohomology of $E$ follows from twisting the first resolution in that theorem.
\end{proof}

On the other hand, if $E$ achieves the bound in Theorem \ref{thm:better_bound_sections_stratified} due to $h^0(E)$ being equal to $$\beta_{r,\mu''}+(j+1)(r+a),$$ we have the following result which shows that $E$ can be twisted down a lot and still retain global generation properties, and $E$ is often a twisted Steiner-like bundle.

\begin{theorem}\label{thm:strongly_maximal_structure_2}
	Let $E$ be a $\mu_H$-semistable sheaf of rank $r\geq1$ on $\Quadric$ with $c_1(E)=(a,b)$. Without loss of generality, assume $b\geq a$. Suppose $E$ is generically globally generated, and $j$ is the largest integer with $$0\leq j\leq \Bigg\lfloor\frac{b-a}{r}\Bigg\rfloor$$ for which $E\!\left(0,-j\right)$ is generically globally generated. Let $$\mu'':=\mu_H(E)-\frac{(j+1)}{2},$$ and suppose that $$h^0(E)=\beta_{r,\mu''}+(j+1)(r+a).$$ Then $j$ must be equal to $\lfloor(b-a)/r\rfloor$, i.e., $E\!\left(0,-\lfloor(b-a)/r\rfloor\right)$ must be generically globally generated.
	
	Furthermore, suppose $a\geq r/2$. Then $h^1(E)=h^2(E)=0$, and $E$ is a globally generated vector bundle with $E\cong S(0,\lfloor(b-a)/r\rfloor+1)$ for some balanced twisted Steiner-like bundle $S$.
\end{theorem}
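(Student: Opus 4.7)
My plan is to peel the argument in two: first establish that $j$ is forced to be $\lfloor (b-a)/r\rfloor$ by tracing through the equality case of Theorem \ref{thm:better_bound_sections_stratified}, and then reduce the structural assertion to Theorem \ref{thm:full_structure_thm_maximal_sheaves_with_vanishing_h1} applied to $S := E(0,-j-1)$.

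For the first claim, I would trace the chain of inequalities appearing in the proof of Theorem \ref{thm:better_bound_sections_stratified}. The assumption $h^0(E) = \beta_{r,\mu''} + (j+1)(r+a)$ forces each inequality in that chain to be an equality, so in particular $h^0(E(0,-j-1)) = \beta_{r,\mu''}$; thus $E(0,-j-1)$ is a $\mu_H$-semistable sheaf of rank $r$ and slope $\mu''$ of deficiency zero. Arguing by contradiction, suppose $j < \lfloor (b-a)/r\rfloor$, so that $j+1 \leq (b-a)/r$. Combined with $a \geq 0$ (Lemma \ref{lem:ggg_implies_a,b_nonnegative}), this gives
$$\mu'' = \frac{a+b}{2r} - \frac{j+1}{2} \geq \frac{a+b}{2r} - \frac{b-a}{2r} = \frac{a}{r} \geq 0.$$
Hence $\alpha_{\mu''} \geq 1$ and the deficiency $0 < \alpha_{\mu''}^2$, so Lemma \ref{lem:small_deficiency_implies_gen_gg} forces $E(0,-j-1)$ to be generically globally generated, contradicting the choice of $j$. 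Therefore $j = \lfloor (b-a)/r\rfloor$.

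Now assume $a \geq r/2$ and write $b - a = qr + s$ with $0 \leq s < r$, so by the previous step $j = q$. A direct calculation gives $\mu'' = a/r - 1/2 + s/(2r) \geq 0$. Setting $S := E(0,-j-1)$, we have that $S$ is $\mu_H$-semistable of slope $\mu'' \geq 0$ with $h^0(S) = \beta_{r,\mu''}$, i.e., $S$ is maximal in the sense of Definition \ref{def:deficiency_maximality}. Theorem \ref{thm:full_structure_thm_maximal_sheaves_with_vanishing_h1} then identifies $S$ as a balanced twisted Steiner-like vector bundle, globally generated, with $h^1(S) = h^2(S) = 0$ and a resolution
$$0 \to \OO(\alpha-2,\alpha-1)^m \oplus \OO(\alpha-1,\alpha-2)^n \to \OO(\alpha-1,\alpha-1)^{r+m+n} \to S \to 0,$$
where $\alpha := \alpha_{\mu''} \geq 1$. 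By construction $E \cong S(0, j+1) = S(0, \lfloor (b-a)/r\rfloor + 1)$, and since tensor products of globally generated sheaves are globally generated, $E$ is a globally generated vector bundle.

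To finish, $h^2(E) = 0$ by $\mu_H$-semistability and Serre duality (since $\mu_H(E) > -2$). For $h^1(E) = 0$, twist the resolution of $S$ above by $\OO(0, j+1)$ to obtain
$$0 \to \OO(\alpha-2,\alpha+j)^m \oplus \OO(\alpha-1,\alpha+j-1)^n \to \OO(\alpha-1,\alpha+j)^{r+m+n} \to E \to 0.$$
Since $\alpha \geq 1$ and $j \geq 0$, every line bundle appearing here has both coordinates $\geq -1$, so by K\"unneth each has vanishing higher cohomology, and the associated long exact sequence yields $h^1(E) = 0$. The main obstacle I anticipate is the first step: the key observation is that the maximality of $j$ together with $j+1 \leq \lfloor (b-a)/r\rfloor$ numerically forces $\mu'' \geq 0$, so that Lemma \ref{lem:small_deficiency_implies_gen_gg} can be invoked without needing the extra hypothesis $a \geq r/2$. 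Once this is in hand, the rest is a direct reduction to the balanced structure theorem.
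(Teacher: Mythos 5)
Your proposal is correct and follows essentially the same route as the paper: force $h^0(E(0,-j-1))=\beta_{r,\mu''}$ from the equality case of Theorem \ref{thm:better_bound_sections_stratified}, show $\mu''\geq0$ when $j<\lfloor(b-a)/r\rfloor$ so that Lemma \ref{lem:small_deficiency_implies_gen_gg} contradicts the maximality of $j$, and then apply Theorem \ref{thm:full_structure_thm_maximal_sheaves_with_vanishing_h1} to $S=E(0,-j-1)$ when $a\geq r/2$. The only differences are cosmetic: you derive $\mu''\geq0$ directly from $j+1\leq(b-a)/r$ and $a\geq0$ where the paper uses a nested contradiction, and you spell out the $h^1$-vanishing via the twisted resolution, which the paper leaves implicit in its citation of the structure theorem.
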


\begin{proof}
	Similarly as in the proof of Theorem \ref{thm:strongly_maximal_structure} above, we get $h^0\left(E\!\left(0,-j-1\right)\right)=\beta_{r,\mu''}$.
	
	Suppose $j$ is strictly smaller than $\lfloor(b-a)/r\rfloor$. Now suppose $\mu(E\!\left(0,-j-1\right))<0$, i.e., $$\frac{a+b}{2r}-\frac{(j+1)}{2}<0.$$ Then $$\frac{a+b}{r}-1<j\leq\Bigg\lfloor\frac{b-a}{r}\Bigg\rfloor-1\leq\frac{b-a}{r}-1.$$ So $$\frac{a+b}{r}<\frac{b-a}{r},$$ which implies that $a<0$. This is impossible, since $E$ is generically globally generated and we have Lemma \ref{lem:ggg_implies_a,b_nonnegative}. So $\mu(E\!\left(0,-j-1\right))\geq0$. Lemma \ref{lem:small_deficiency_implies_gen_gg} tells us that $E(0,-j-1)$ is generically globally generated, since its deficiency is zero. This contradicts the definition of $j$. Hence, $j=\lfloor(b-a)/r\rfloor$.
	 
	 Now if $a\geq r/2$, then $\mu''\geq0$. Theorem \ref{thm:full_structure_thm_maximal_sheaves_with_vanishing_h1} proves the remaining assertions.
\end{proof}

\section{Semistability of extensions}\label{sec:semistab_extn}

We take a detour in this section to prove the semistability of extensions of stable vector bundles on elliptic curves and del Pezzo surfaces. This will be used in the following section to show the sharpness of the bound in Theorem \ref{thm:stratified_bound}.

\subsection{Extensions on curves}

We first investigate the semistability of extensions of stable bundles on elliptic curves, and find positive answers.

\begin{theorem}\label{thm-elliptic}
	Let $C$ be an elliptic curve with an arbitrary polarization. Let $A$ and $B$ be stable bundles on $C$ with $\mu(A)<\mu(B)$. Let $V$ be given by a general extension of the form $$0\to A\to V\to B\to 0.$$  Then\ $V$ is semistable.
\end{theorem}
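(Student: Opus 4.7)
My plan is to show that the locus of unstable extensions in $E := \text{Ext}^1(B,A)$ is a proper closed subvariety, via a dimension count stratified by the numerical type of the (maximal) destabilizing subsheaf.

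Setup: Since $A$ and $B$ are stable with $\mu(A)<\mu(B)$, we have $\Hom(B,A)=0$. The curve $C$ has trivial canonical, so Serre duality and Riemann–Roch give $N := \dim E = -\chi(B,A) = r_A d_B - r_B d_A > 0$. The locus of unstable $V$ is closed in $E$ by upper semi-continuity of $\mu_{\max}$, so it suffices to show it has dimension strictly less than $N$.

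Analysis of the destabilizer: Suppose $V$ is unstable; take the first Harder–Narasimhan piece of $V$ and then a Jordan–Hölder subfactor of the same slope to get a stable subsheaf $F\subset V$ with $\mu(A)<\mu(V)<\mu(F)\leq \mu(B)$. The composition $F\hookrightarrow V\twoheadrightarrow B$ has kernel $K := F\cap A$ and image $G := \Img(F\to B)\subseteq B$, fitting in $0\to K\to F\to G\to 0$. Since $\mu(F)>\mu(A)$, $F\not\subseteq A$, forcing $G\neq 0$, and I split on whether $K=0$.

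Main case $K=0$: Here $F\hookrightarrow B$, so the section $F\hookrightarrow V$ of $V\to B$ over $F$ forces $[V]$ to vanish under $\text{Ext}^1(B,A)\to\text{Ext}^1(F,A)$. Applying $\Hom(-,A)$ to $0\to F\to B\to B/F\to 0$ and using $\Hom(F,A)=0$ (semistability with different slopes) and the injection $\Hom(B/F,A)\hookrightarrow\Hom(B,A)=0$, the relevant long exact sequence becomes
$$0\to\text{Ext}^1(B/F,A)\hookrightarrow \text{Ext}^1(B,A)\twoheadrightarrow \text{Ext}^1(F,A)\to 0,$$
so $[V]$ lies in a linear subspace of codimension $\dim\text{Ext}^1(F,A)=r_Ad_F-r_Fd_A$. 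As $F$ varies over stable subsheaves of $B$ with fixed invariants $(r_F,d_F)$, the parameter space is a Quot scheme of dimension at most $\dim\Hom(F,B/F)=r_Fd_B-r_Bd_F$ (using that on the elliptic curve the obstructing $\Ext^1(F,B/F)$ vanishes). Adding, the bad stratum of this type has dimension at most
$$(r_Fd_B-r_Bd_F) + \bigl(N-(r_Ad_F-r_Fd_A)\bigr) \;=\; N - r_F(r_A+r_B)\bigl(\mu(F)-\mu(V)\bigr) \;<\; N,$$
where the final inequality is precisely the destabilization hypothesis $\mu(F)>\mu(V)$.

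Cases with $K\neq 0$: I parameterize the destabilizing data by triples $(K\subseteq A,\ G\subseteq B,\ [F]\in \text{Ext}^1(G,K))$. The existence of $F\hookrightarrow V$ compatible with this data is equivalent to the matching of extension classes $[V]|_G = (K\hookrightarrow A)_*[F]$ in $\text{Ext}^1(G,A)$. Since the pullback $\text{Ext}^1(B,A)\twoheadrightarrow\text{Ext}^1(G,A)$ is surjective, this is a codimension $\dim\text{Ext}^1(G,A)$ affine linear condition on $[V]$. Bounding the parameter space of triples by Euler-characteristic computations for Quot schemes of $A$ and $B$, then running the analogous slope-inequality comparison, produces strict codimension in $E$ for each numerical type. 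Because the invariants are bounded by those of $V$, only finitely many strata arise, and their union is a proper closed subset of $E$.

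The main obstacle is the mixed case $K\neq 0$, which requires bookkeeping of the Quot schemes for both $A$ and $B$ together with the extension datum $[F]\in\text{Ext}^1(G,K)$, and reducing the final inequality to a slope comparison. I expect this to work cleanly because all relevant $\Ext$ dimensions on the elliptic curve reduce via Serre duality to $\chi$-computations, and the inequality $\mu(F)>\mu(V)$ is precisely what is needed to make the arithmetic balance out to strict codimension.
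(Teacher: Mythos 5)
Your main case ($K=0$) is essentially correct: the identification of the bad locus with the kernel of the surjection $\Ext^1(B,A)\onto\Ext^1(F,A)$, the vanishing $\Ext^1(F,B/F)\cong\Hom(B/F,F)^*=0$ (torsion-free quotients of the stable bundle $B$ have slope $>\mu(B)\geq\mu(F)$), and the arithmetic identity reducing the codimension to $r_F(r_A+r_B)\bigl(\mu(F)-\mu(V)\bigr)>0$ all check out, and finiteness of the numerical strata follows from $\mu(V)<\mu(F)\leq\mu(B)$. But the mixed case $K\neq0$, which you explicitly defer, is a genuine gap rather than routine bookkeeping, and it is the crux of this route. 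Concretely: (a) your proposed count pairs the full parameter space $\Ext^1(G,K)$ against a condition of codimension $\ext^1(G,A)$; the honest statement is that for fixed $(K,G)$ the bad locus is the preimage of $\Img\bigl(\iota_*:\Ext^1(G,K)\to\Ext^1(G,A)\bigr)$, whose codimension is $\ext^1(G,A/K)$, and when $\Hom(G,A/K)\neq0$ your version strictly overcounts, so "the arithmetic balances out" is not automatic; (b) the quantities $\hom(K,A/K)$, $\hom(G,B/G)$, $\hom(G,A)$ entering the count are not purely numerical (the quotients $A/K$, $B/G$ may have torsion, $G$ need not be saturated or semistable), so reducing everything to $\chi$-computations as in the clean case requires additional vanishing arguments you have not supplied; (c) the boundary sub-cases need separate treatment: when $K=A$ the matching condition on $[V]$ is vacuous (then $F=\pi^{-1}(G)$ for $\pi:V\to B$, and one must instead argue directly that such $F$ never destabilizes, e.g.\ because $\mu(B/G)\geq\mu(B/\bar G)>\mu(B)>\mu(V)$ for the saturation $\bar G$), and when $G=B$ the Quot factor degenerates and the inequality must be re-derived. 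Until these points are carried out for every admissible numerical type, the unstable locus has not been shown to be proper.

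For comparison, the paper avoids the stratification entirely: it shows via the Kodaira--Spencer map that the universal extension over $\Ext^1_C(B,A)$ is a complete family of sheaves with fixed determinant (using $\Hom(V_s,A)=0$ and $\ext^1_C(B,B)=1$ to see the image has codimension exactly one, hence equals the traceless part $\Ext^1_0(V_s,V_s)$), and then invokes irreducibility of the stack of coherent sheaves with fixed rank and determinant on an elliptic curve together with density of the semistable locus. That argument trades your case analysis for a single deformation-theoretic computation; if you wish to salvage the dimension-count approach, the corrections in (a)--(c) above are where the work lies.
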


\begin{proof}
	The idea of the proof is to show that the family of all non-split extensions  is a complete family of sheaves with fixed rank and determinant.  Then since the stack of coherent sheaves on $C$ with fixed rank and determinant is irreducible and the semistable sheaves are a dense open substack, it follows that the general extension is semistable.
	
	Let $S = \Ext^1_C(B,A)$ and let $V_s/S$ be the universal extension sheaf. For $0\neq s\in S$, the Kodaira-Spencer map $$T_s S = \Ext^1_C(B,A)\to \Ext^1_C(V_s,V_s)$$ factors as the composition of natural maps $$\Ext^1_C(B,A) \fto\alpha \Ext^1_C(B,V_s) \fto\beta \Ext^1_C(V_s,V_s).$$  These maps fit into the exact sequences $$\Ext^1_C(B,A)\fto\alpha \Ext^1_C(B,V_s)\to \Ext^1_C(B,B)\to 0$$ $$\Ext^1_C(B,V_s)\fto\beta \Ext^1_C(V_s,V_s)\to \Ext^1_C(A,V_s).$$  
	
	By Serre duality, $\Ext^1_C(A,V_s) \cong \Hom(V_s,A)^*$. Note that $\Hom(V_s,A)$ fits into an exact sequence $$0\to \Hom(B,A)\to \Hom(V_s,A)\to \Hom(A,A)\fto\delta \Ext^1_C(B,A).$$ Since $\mu(A)<\mu(B)$,  $\Hom(B,A) = 0$ by semistability. Since stable bundles are simple, we have $\Hom(A,A) = \CC\cdot\id$. Moreover, $\delta$ is injective because $\delta(\id) = s\neq0$.  So $\Hom(V_s,A) = 0$, whence $\beta$ is surjective.
	
	On the other hand, $\ext^1_C(B,B)=\hom(B,B)=1$ by Serre duality and stability, so the image of $\alpha$ has codimension $1$ in $\Ext^1_C(B,V_s)$.  Hence, the image of $T_s S$ in $\Ext^1_C(V_s,V_s)$ under the Kodaira-Spencer map has codimension at most $1$ in $\Ext^1_C(V_s,V_s)$.  Since all the sheaves parameterized by $S$ have fixed determinant, this image lies in the codimension $1$ subspace $\Ext^1_0(V_s,V_s)$ of traceless extensions.  Therefore the image equals $\Ext^1_0(V_s,V_s)$, which is the tangent space to the stack of coherent sheaves on $C$ with fixed determinant at $[V_s]$. So the family is complete at $s$.
\end{proof}

\subsection{Restrictions of extensions}

We now observe that general extensions on a surface restrict to general extensions on curves on the surface.

\begin{lemma}\label{lem-extend}
	Let $A$ be a sheaf on a smooth polarized surface $(X,L)$ which is locally free along a curve $C\subset X$. Let $M_{X,L}(r,c_1,\chi)$ be a moduli space of semistable sheaves on $X$ with $\chi \ll 0$ (depending on $A,r,c_1$). Suppose $M$ is an irreducible component of this moduli space such that the general element of $M$ is stable on $X$ and locally free along $C$. If $B\in M$ is general, then a general extension $$0\to A\to V\to B\to 0$$ induces a general extension $$0\to A|_C\to V|_C\to B|_C\to 0.$$
\end{lemma}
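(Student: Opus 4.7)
The plan is to reduce the assertion to showing that the restriction map
$$\rho_B: \Ext^1_X(B, A) \to \Ext^1_C(B|_C, A|_C)$$
is surjective for $B$ general in $M$. Since $A$ and $B$ are both locally free along $C$, restriction of an extension $0 \to A \to V \to B \to 0$ to $C$ yields an extension $0 \to A|_C \to V|_C \to B|_C \to 0$, and on extension classes this operation is exactly $\rho_B$. Once $\rho_B$ is surjective, the image of a general class in the source is general in the target, giving the lemma.

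To analyze $\rho_B$, I would apply $\Hom_X(B, -)$ to the short exact sequence
$$0 \to A(-C) \to A \to i_*(A|_C) \to 0,$$
where $i: C \hookrightarrow X$ is the inclusion. This yields the exact sequence
$$\Ext^1_X(B, A) \xrightarrow{\rho_B} \Ext^1_X(B, i_*(A|_C)) \to \Ext^2_X(B, A(-C)).$$
Because $B$ is locally free along $C$, we have $Li^*B = B|_C$, and the adjunction $Li^* \dashv i_*$ identifies the middle term with $\Ext^1_C(B|_C, A|_C)$. Thus $\rho_B$ is surjective as soon as $\Ext^2_X(B, A(-C)) = 0$. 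By Serre duality on the surface $X$, this vanishing is equivalent to
$$\Hom_X(A, B \otimes K_X(C)) = 0.$$

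The heart of the proof is to establish this Hom vanishing for $B$ general in $M$, exploiting the hypothesis $\chi \ll 0$. By upper semicontinuity of $\dim \Hom$ in $B$, it suffices to produce a single $B \in M$ where the vanishing holds. To do so I would proceed by successive elementary modifications: starting from a reference stable sheaf $B_0$ with some fixed Euler characteristic, at each stage replace $B_i$ by the kernel $B_{i+1}$ of a surjection $B_i \twoheadrightarrow \OO_{p_i}$ at a general point $p_i$, which decreases $\chi$ by $1$ while preserving $r$ and $c_1$. The long exact sequence obtained by applying $\Hom(A, -)$ to
$$0 \to B_{i+1} \otimes K_X(C) \to B_i \otimes K_X(C) \to \OO_{p_i} \to 0$$
shows that for $p_i$ general, the dimension of $\Hom(A, B_i \otimes K_X(C))$ strictly drops whenever it is positive: any nonzero $\phi: A \to B_i \otimes K_X(C)$ has a proper vanishing locus, so $\phi|_{p_i}$ is nonzero for general $p_i$, and by generic choice of the surjection the composition with $B_i \twoheadrightarrow \OO_{p_i}$ remains nonzero. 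After at most $\dim \Hom(A, B_0 \otimes K_X(C))$ such steps the Hom vanishes.

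The main obstacle is ensuring that the resulting $B$ lies in the chosen irreducible component $M$ and is stable. This is exactly where the hypothesis \emph{$\chi \ll 0$ depending on $A, r, c_1$} is used: standard results on moduli of sheaves with large discriminant guarantee that elementary modifications of a stable sheaf at a sufficient number of general points remain stable and sweep out a dense open of the moduli space for $\chi$ small enough, and the threshold for $\chi$ is controlled by $\dim \Hom(A, B_0 \otimes K_X(C))$, which depends only on $A$, $r$, and $c_1$. Once the Hom vanishing is in hand, the rest of the argument is entirely formal.
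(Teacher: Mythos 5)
Your proposal is correct and takes essentially the same route as the paper: both reduce the lemma to surjectivity of the restriction map on $\Ext^1$ via the sequence coming from $0\to A(-C)\to A\to A|_C\to 0$, identify the obstruction as $\Ext^2_X(B,A(-C))\cong\Hom(A,B(C+K_X))^*$ by Serre duality, and kill this Hom for general $B$ by elementary modifications at general points together with semicontinuity and the irreducibility of the moduli space for $\chi\ll0$. The only notable variation is that you justify $\Ext^1_X(B,A|_C)\cong\Ext^1_C(B|_C,A|_C)$ by the adjunction $Li^*\dashv i_*$ using only local freeness of $B$ along $C$, which uniformly covers the rank-one case that the paper treats separately through the double dual after invoking O'Grady to make $B$ globally locally free when $r\geq2$.
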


\begin{proof}
	If $r\geq2$, then due to the assumption $\chi\ll 0$, O'Grady's theorems (see \cite{O'Grady96}) imply that $M$ is itself irreducible, and $B$ is a $\mu_L$-stable vector bundle. It follows that we have natural isomorphisms $$\Ext^1_X(B,A|_C) \cong H^1((B^*\otimes A)|_C) \cong \Ext^1_C(B|_C,A|_C).$$ Given an extension class $s\in \Ext^1_X(B,A)$ defining $V$, the extension class $s'\in \Ext^1_C(B|_C,A|_C)$ defining $V|_C$ is given by the image of $s$ under the natural map$$\Ext^1_X(B,A)\to\Ext^1_X(B,A|_C)\cong\Ext^1_C(B|_C,A|_C).$$ This map fits into the exact sequence $$\Ext^1_X(B,A)\to \Ext^1_X(B,A|_C)\to \Ext^2_X(B,A(-C)).$$  Thus if $\Ext^2_X(B,A(-C))=0$, then a general extension on $X$ induces a general extension on $C$.
	
	We have $\ext^2_X(B,A(-C)) = \hom(A,B(C+K_X))$.  By semicontinuity, it suffices to produce a single $B$ which has $\hom(A,B(C+K_X))=0$. Suppose $\hom(A,B(C+K_X))\neq0$. If we replace $B$ by an elementary modification $$0\to B'\to B\to \OO_p\to 0$$ for some $p\in X$, then $\chi(B') = \chi(B)-1,$ $B'$ is again $\mu$-stable, and $\Hom(A,B'(C+K_X))$ is a vector subspace of $\Hom(A,B(C+K_X))$.   The map $B\to \OO_p$ depends on a choice of a kernel hyperplane $H_p\subset B_p$ in the fiber $B_p$.  If $p$ and the hyperplane $H_p$ are both chosen generally, then the map $\Hom(A,B(C+K_X))\to \Hom(A,\OO_p(C+K_X))$ will be nonzero, so that $$\Hom(A,B'(C+K_X))\subsetneq \Hom(A,B(C+K_X))$$ is a \emph{proper} subspace.  Repeatedly applying elementary modifications eventually yields a $\mu$-stable sheaf $B''$ with smaller Euler characteristic such that $\hom(A,B''(C+K_X))=0$.
	
	Finally, if $r=1$, then we consider the exact sequence $$0\to B\to B^{**}\to T\to 0,$$ where $B^{**}$ denotes the double dual of $B$. By considering the local-to-global $\Ext$ spectral sequence, we see that $\Ext^i_X(E,F)=0$ for all $i\geq0$, for any $\OO_X$-modules $E$ and $F$ on $X$ with disjoint supports. Thus, in particular, $\Ext^1_X(T,A|_C)=\Ext^2_X(T,A|_C)=0$. So we have a natural isomorphism $\Ext^1_X(B^{**},A|_C)\xrightarrow{\cong}\Ext^1_X(B,A|_C)$. Since $B^{**}$ is a vector bundle, it follows as before that $$\Ext^1_X(B^{**},A|_C)\cong\Ext^1_C(B^{**}|_C,A|_C)\cong\Ext^1_C(B|_C,A|_C).$$ The rest of the proof follows exactly as above.
\end{proof}

\subsection{Semistability of restrictions}

Given a surface $X$ with irregularity $0$ and an anticanonical elliptic curve $C$ on the surface, we now prove the semistability of the restriction of the general stable bundle on $X$ to $C$.

\begin{lemma}\label{lem-restrict}
	Let $(X,L)$ be a smooth polarized surface with irregularity $h^1(\OO_X)=0$ such that $|-K_X|$ contains a smooth elliptic curve $C$. Let $M$ be any irreducible component of a moduli space of sheaves on $X$ where the general sheaf is stable on $X$ and locally free along $C$. Then the general sheaf $V\in M$ restricts to a semistable bundle $V|_C$ on $C$.
\end{lemma}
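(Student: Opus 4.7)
The plan is to realize restriction to $C$ as a morphism $\rho\colon M \to \mathcal{N}$ to the stack $\mathcal{N}$ of torsion-free rank-$r$ sheaves on $C$ with the fixed determinant $\det(V)|_C$.  This determinant is well-defined since $h^1(\OO_X)=0$ forces $\det V$ to be constant on the irreducible component $M$.  I will show $\rho$ is smooth at every stable, $C$-locally-free point of $M$; combined with the density of the semistable locus in $\mathcal{N}$ on the elliptic curve $C$, this will yield the claim.

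The key input is that $\Ext^2_X(V,V)=0$ for every $V\in M$ stable on $X$.  Since $C \in |-K_X|$ gives $K_X \cong \OO_X(-C)$, Serre duality yields $\Ext^2_X(V,V)\cong\Hom_X(V,V(-C))^*$, and this vanishes by $\mu_L$-stability of $V$, because
$$\mu_L(V(-C)) \;=\; \mu_L(V) - \frac{C\cdot L}{L^2} \;<\; \mu_L(V).$$
Now apply $\Hom_X(V,-)$ to $0\to V(-C)\to V\to V|_C\to 0$.  Since $V$ is locally free along $C$, one has $\Ext^i_X(V,V|_C)\cong \Ext^i_C(V|_C,V|_C)$, giving the exact sequence
$$\Ext^1_X(V,V)\xrightarrow{\rho_*}\Ext^1_C(V|_C,V|_C)\xrightarrow{\tau}\Ext^2_X(V,V(-C))\to 0,$$
where the rightmost term equals $\Hom_X(V,V)^*\cong\CC$ by Serre duality and simplicity of the stable sheaf $V$.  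I would identify the connecting map $\tau$ with the trace $\Ext^1_C(V|_C,V|_C)\to H^1(C,\OO_C)\cong\CC$ (using $K_C\cong\OO_C$ by adjunction); this is a standard Atiyah-class computation expressing $\tau$ as the infinitesimal deformation of the determinant line bundle.  Hence $\Img(\rho_*)$ equals the traceless subspace, which is precisely $T_{V|_C}\mathcal{N}$.

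Tangential surjectivity of $\rho_*$ onto $T_{V|_C}\mathcal{N}$, combined with the vanishing $\Ext^2_X(V,V)=0$ (which makes $M$ smooth at $V$), implies $\rho$ is smooth at $V$.  Therefore $\rho(M)$ contains a Zariski-open neighborhood of $\rho(V)$ in $\mathcal{N}$.  The stack $\mathcal{N}$ is irreducible (bundles on a curve with fixed rank and determinant form an irreducible stack), and the non-semistable locus is a proper closed union of Harder--Narasimhan strata; hence the open set $\rho(M)$ meets the dense semistable locus.  Consequently some $V\in M$ has $V|_C$ semistable, and by openness of semistability in flat families together with irreducibility of $M$, this property holds on a dense open of $M$.

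The main obstacle I anticipate is the identification of $\tau$ with the trace, since the whole argument hinges on $\rho_*$ filling out the full fixed-determinant tangent space rather than some smaller subspace.  This identification is standard (Atiyah-class / Kodaira--Spencer description of determinant deformations), but must be set up carefully to be certain that the one-dimensional cokernel of $\rho_*$ corresponds exactly to the determinant direction and imposes no additional constraints.
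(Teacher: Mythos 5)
Your proposal is correct and follows essentially the same route as the paper: the same restriction sequence $0\to V(-C)\to V\to V|_C\to 0$, the same Serre-duality computations ($\Ext^2_X(V,V)=0$ and $\ext^2_X(V,V(-C))=\hom(V,V)=1$) giving a codimension-one image in $\Ext^1_C(V|_C,V|_C)$, and the same conclusion via density of the semistable locus in the irreducible fixed-determinant stack on the elliptic curve. The one step you flag as delicate---identifying the connecting map with the trace---can be bypassed exactly as the paper does: since the restricted family has fixed determinant (using $h^1(\OO_X)=0$), the image automatically lies in the codimension-one traceless subspace, so the dimension count alone forces equality.
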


\begin{proof}
	The proof is similar to that of Theorem \ref{thm-elliptic}. We show that if $V_s/S$ is a complete family of stable sheaves on $X$ that are locally free along $C$, then $V_s|_C/S$ is a complete family of sheaves with fixed determinant on $C$.  The Kodaira-Spencer map for the restricted family $$T_s S \to \Ext_C^1(V_s|_C,V_s|_C)$$ factors through the Kodaira-Spencer  map of the original family as $$T_s S \to \Ext_X^1(V_s,V_s)\to \Ext_C^1(V_s|_C,V_s|_C).$$  Here the first map is surjective.  We have an isomorphism $\Ext^1_C(V_s|_C,V_s|_C) \cong \Ext^1_X (V_s,V_s|_C)$, and thus the second map fits in the exact sequence $$\Ext^1_X(V_s,V_s)\to \Ext_X^1(V_s,V_s|_C)\to \Ext_X^2(V_s,V_s(-C))\to 0.$$ The last $0$ above is because $\Ext^2_X(V_s,V_s)\cong\Hom(V_s,V_s(-C))^*=0$ due to semistability. We also have $\ext^2_X(V_s,V_s(-C)) = \hom(V_s,V_s) = 1$, since $V_s$ is stable. So the image of $T_sS\to \Ext^1_C(V_s|_C,V_s|_C)$ has codimension $1$.  Since the sheaves $V_s|_C$ all have the same determinant, the image equals the subspace of traceless extensions, and $V_s|_C$\ is a complete family of sheaves with fixed determinant.
\end{proof}

\subsection{Extensions on surfaces}

Consider a del Pezzo surface $X$. Then $-K_X$ is ample and $|-K_X|$ contains a smooth elliptic curve $C$. Also, $h^1(\OO_X)=0$. Using the results above, we now show that the general extension of stable bundles on $X$ are semistable under some coprimality assumptions.

By Walter's theorem (see \cite{Walter98}), the moduli space of semistable sheaves on $X$ with fixed invariants is irreducible, if nonempty. Thus, it makes sense to talk about the general element of such a moduli space. The general such element is locally free along $C$ (it is a vector bundle if the rank is at least $2$, and it is a twist of an ideal sheaf $I_Z$ of a general $0$-dimensional subscheme $Z\subset X$ if the rank is $1$).

\begin{theorem}\label{thm-ss_ext_elliptic}
	Let $X$ be a del Pezzo surface with anticanonical polarization, and let $C\in|-K_X|$ be a smooth elliptic curve with an arbitrary polarization. Suppose $M'=M_{X,-K_X}(r',c_1',\chi')$ and $M''=M_{X,-K_X}(r'',c_1'',\chi'')$ are moduli spaces of semistable sheaves on $X$, with $\chi''\ll0$. Let $A$ and $B$ be general elements of $M'$ and $M''$ respectively. Assume that $A$ is stable on $X$, that $$\mu(A|_C)<\mu(B|_C),$$ and that $\gcd(r',c_1'\!\cdot\!C)=\gcd(r'',c_1''\!\cdot\!C)=1$. Then for a general extension $$0\to A\to V\to B\to 0,$$ $V$ is $\mu_{-K_X}$-semistable. If furthermore $\gcd(r'+r'',(c_1'+c_1'')\!\cdot\!C)=1$, then $V$ is $\mu_{-K_X}$-stable.
\end{theorem}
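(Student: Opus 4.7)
The plan is to reduce $\mu_{-K_X}$-(semi)stability of $V$ on $X$ to ordinary $\mu$-(semi)stability of the restriction $V|_C$ on the elliptic curve $C$, and then to obtain the latter by combining the three preceding results in this section.

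For the input on $C$, I would argue as follows. By Walter's irreducibility theorem the moduli spaces $M'$ and $M''$ are irreducible, and since a general torsion-free sheaf on a smooth surface is locally free off a finite set, the general $A \in M'$ and $B \in M''$ are locally free along $C$. Applying Lemma \ref{lem-restrict} separately to $M'$ and $M''$, the restrictions $A|_C$ and $B|_C$ are $\mu$-semistable on $C$, and the hypotheses $\gcd(r', c_1' \cdot C) = 1$ and $\gcd(r'', c_1'' \cdot C) = 1$ promote these to $\mu$-stability, since on an elliptic curve a semistable bundle with coprime rank and degree is automatically stable. The hypothesis $\mu(A|_C) < \mu(B|_C)$ and Theorem \ref{thm-elliptic} then give that a general extension of $B|_C$ by $A|_C$ on $C$ is $\mu$-semistable. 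Because $\chi'' \ll 0$, Lemma \ref{lem-extend} applies and says the extension on $C$ induced by a general $V$ is itself general; hence $V|_C$ is $\mu$-semistable on $C$.

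The key step is to transfer this back to $X$. Let $F \subseteq V$ be any saturated subsheaf, so that $Q := V/F$ is torsion-free; thus $F$, $V$, $Q$ are all torsion-free $\OO_X$-modules. For any torsion-free sheaf $\mathcal{G}$ on the smooth surface $X$, the local defining equation of the Cartier divisor $C$ is a non-zero-divisor on $\mathcal{G}$, so $\sTor_1^{\OO_X}(\mathcal{G}, \OO_C) = 0$. Tensoring the sequence $0 \to F \to V \to Q \to 0$ with $\OO_C$ therefore yields a short exact sequence
\begin{equation*}
0 \to F|_C \to V|_C \to Q|_C \to 0
\end{equation*}
of coherent sheaves on $C$. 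The slope identity $\mu_{-K_X}(F) = (c_1(F) \cdot C)/\rk F = \mu(F|_C)$, and the analogous one for $V$, then converts $\mu$-semistability of $V|_C$ into the inequality $\mu_{-K_X}(F) \leq \mu_{-K_X}(V)$, establishing $\mu_{-K_X}$-semistability of $V$.

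For the stability claim, assume additionally $\gcd(r' + r'', (c_1' + c_1'') \cdot C) = 1$. If $V$ were strictly semistable, a proper subsheaf $F \subsetneq V$ with $0 < \rk F < \rk V$ and $\mu_{-K_X}(F) = \mu_{-K_X}(V)$ would satisfy $(c_1(F)\cdot C)\,\rk V = (c_1(V)\cdot C)\,\rk F$, and the coprimality hypothesis would force $\rk V \mid \rk F$, contradicting $\rk F < \rk V$. The main obstacle I anticipate is the bookkeeping to ensure the genericity hypotheses of Lemma \ref{lem-extend} are available in this del Pezzo setting (irreducibility of $M''$ via Walter, the general element stable and locally free along $C$ via O'Grady-type estimates); once these are verified, the rest is a direct chain of deductions from the preceding lemmas, with the tor-vanishing observation doing the main geometric work.
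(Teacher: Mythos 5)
Your proposal is correct and follows essentially the same route as the paper: Lemma \ref{lem-restrict} plus the coprimality hypotheses give stability of $A|_C$ and $B|_C$, Lemma \ref{lem-extend} (using $\chi''\ll0$) shows $V|_C$ is a general extension, Theorem \ref{thm-elliptic} gives semistability of $V|_C$, and the standard restriction/slope comparison (which you spell out via the $\sTor$-vanishing) transfers this to $\mu_{-K_X}$-semistability of $V$. The only cosmetic difference is that you deduce stability of $V$ from the coprimality of $r'+r''$ and $(c_1'+c_1'')\cdot C$ directly on $X$, whereas the paper first upgrades $V|_C$ to a stable bundle on $C$ and then restricts subsheaves; these are the same numerical fact.
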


\begin{proof}
	By Lemma \ref{lem-restrict} and the coprimality assumptions, both $A|_C$ and $B|_C$ are stable.  By Lemma \ref{lem-extend}, $V|_C$ is given by a general extension of $B|_C$ by $A|_C$.  Theorem \ref{thm-elliptic} then shows that $V|_C$ is semistable, whence $V$ is $\mu_{-K_X}$-semistable.  If furthermore $r'+r''$ and $(c_1'+c_1'')\!\cdot\!C$\ are coprime, then $V|_C$ is stable and $V$ is $\mu_{-K_X}$-stable.
\end{proof}

\section{Investigating maximality - non-globally generated case}\label{sec:maximal_non_gg}

We now take up the question of $\mu_H$-semistable sheaves achieving the bounds from Theorem \ref{thm:stratified_bound} and Corollary \ref{cor:bound_sections_non_gen_gg}. We first prove a necessary condition for such sheaves with small slope, and then use Theorem \ref{thm-ss_ext_elliptic} to illustrate the sharpness of our bound.

\begin{theorem}\label{thm:non_gen_gg_maximal_small_slope_more_general}
	Let $E$ be a $\mu_H$-semistable sheaf of rank $r\geq1$ on $\Quadric$ with $\mu:=\mu_H(E)\in[0,1)$ and $h^0(E)>0$. Let $S\subset E$ be the image of the canonical evaluation map $\OO_{\Quadric}\otimes H^0(E)\to E$, and let $\mu'\in\left[0,\mu\right]$ be the largest rational number that can be written with denominator belonging to the set $\{2,4,\ldots,2\rk(S)\}$. Further suppose that $$h^0(E)=\beta_{\rk(S),\mu'}.$$ Then $S$ is a balanced twisted Steiner-like bundle of slope $\mu'$, and $E$ fits in an exact sequence of the form $$0\to S\to E\to Q\to 0$$ with $h^0(Q)=0$.
\end{theorem}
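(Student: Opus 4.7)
The plan is to first establish that $S$ must be $\mu_H$-semistable of slope exactly $\mu'$, then apply the structure theorem (Theorem \ref{thm:full_structure_thm_maximal_sheaves_with_vanishing_h1}) to conclude that $S$ is balanced twisted Steiner-like and that $h^1(S)=0$, and finally use this cohomology vanishing together with the identification $H^0(S)=H^0(E)$ to deduce $h^0(Q)=0$.

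First I would pin down the slopes of the Harder--Narasimhan factors of $S$. Since $S\subset E$ and $E$ is $\mu_H$-semistable, every subsheaf of $S$ is a subsheaf of $E$, so $\mumax(S)\leq\mu<1$. On the other hand, $S$ is globally generated, so every torsion-free quotient of $S$ is generically globally generated and therefore has non-negative slope by Lemma \ref{lem:ggg_implies_a,b_nonnegative}; thus $\mumin(S)\geq 0$. Hence every HN factor $\gr_i$ of $S$ has slope $\mu_i\in[0,1)$, which in particular forces $\alpha_{\mu_i}=1$ and $\beta_{r_i,\mu_i}=r_i(2\mu_i+1)$. This linearity in the $\mu_i$ is the key feature I will exploit.

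Next comes the main inequality. Using the HN filtration of $S$ together with Theorem \ref{thm:bound_sections} applied to each semistable factor, I would write
\[
h^0(S)\leq\sum_i h^0(\gr_i)\leq\sum_i \beta_{r_i,\mu_i}=\sum_i r_i(2\mu_i+1)=\rk(S)\bigl(2\mu(S)+1\bigr)=\beta_{\rk(S),\mu(S)}.
\]
Since $\mu(S)=\frac{c_1(S)\cdot H}{2\rk(S)}$ can be written with denominator $2\rk(S)\in\{2,4,\ldots,2\rk(S)\}$ and satisfies $\mu(S)\leq\mumax(S)\leq\mu$, the maximality of $\mu'$ gives $\mu(S)\leq\mu'$, and Lemma \ref{lem:monotonicity_of_beta_1}(iv) yields $\beta_{\rk(S),\mu(S)}\leq\beta_{\rk(S),\mu'}$ with strict inequality if $\mu(S)<\mu'$. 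The hypothesis $h^0(S)=h^0(E)=\beta_{\rk(S),\mu'}$ therefore forces $\mu(S)=\mu'$ and each $\mu_i=\mu'$. Since the HN slopes are strictly decreasing, this collapses the filtration to a single step and shows $S$ itself is $\mu_H$-semistable of slope $\mu'$ with $h^0(S)=\beta_{\rk(S),\mumax(S)}$, i.e., $S$ is maximal in the sense of Definition \ref{def:deficiency_maximality}.

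At this point Theorem \ref{thm:full_structure_thm_maximal_sheaves_with_vanishing_h1} applies directly to $S$ (since $\mu'\geq 0$) and delivers both the balanced twisted Steiner-like presentation and the vanishing $h^1(S)=0$. Finally, for the quotient $Q$: the inclusion $S\hookrightarrow E$ induces $H^0(S)\hookrightarrow H^0(E)$, and because $S$ is the image of the evaluation map, every global section of $E$ factors through $S$, so this inclusion is actually an isomorphism. The long exact sequence for $0\to S\to E\to Q\to 0$ thus reads
\[
0\to H^0(S)\xrightarrow{\sim} H^0(E)\to H^0(Q)\to H^1(S)=0,
\]
forcing $h^0(Q)=0$. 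The only genuinely delicate step is the slope-bookkeeping in the main inequality, but the hypothesis $\mu<1$ flattens everything via $\alpha_{\mu_i}=1$, so this is not a serious obstacle.
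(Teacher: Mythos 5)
Your proposal is correct and follows essentially the same route as the paper: your Harder--Narasimhan computation of $h^0(S)\leq\beta_{\rk(S),\mu_H(S)}$ is exactly the proof of Corollary \ref{cor:strengthened_bound} in the $\alpha=1$ case (which the paper simply cites), after which both arguments force $\mu_H(S)=\mu'$ via Lemma \ref{lem:monotonicity_of_beta_1}(iv), deduce semistability of $S$, invoke Theorem \ref{thm:full_structure_thm_maximal_sheaves_with_vanishing_h1}, and use $h^1(S)=0$ together with $H^0(S)=H^0(E)$ to get $h^0(Q)=0$. The one step to tighten is your claim that the equality ``forces $\mu(S)=\mu'$ and each $\mu_i=\mu'$'': the equality chain alone only gives $\mu(S)=\mu'$ (the identity $\sum_i\beta_{r_i,\mu_i}=\beta_{\rk(S),\mu(S)}$ holds automatically), so you should add the observation the paper makes parenthetically for subsheaves of $S$, namely that $\mumax(S)$ lies in $[0,\mu]$ and can be written with denominator $2r_1\in\{2,4,\ldots,2\rk(S)\}$, hence $\mumax(S)\leq\mu'$ by maximality of $\mu'$, and then the average $\mu(S)=\mu'$ does force every HN slope to equal $\mu'$ and the filtration to collapse.
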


\begin{proof}
	As in the proof of Proposition \ref{prop:small_deficiency_implies_gg_cdim_1}, we see that $h^0(S)\leq\beta_{\rk(S),\mu_H(S)}$ by virtue of Corollary \ref{cor:strengthened_bound}. Since $\mu_H(S)\leq\mu'$ (since $E$ is $\mu_H$-semistable), it follows that $\beta_{\rk(S),\mu_H(S)}\leq\beta_{\rk(S),\mu'}$, with equality if and only if $\mu_H(S)=\mu'$ (by Lemma \ref{lem:monotonicity_of_beta_1} (iv)). Since $h^0(S)=h^0(E)=\beta_{\rk(S),\mu'}$, it must be the case that $\mu_H(S)=\mu'$, and it follows that $S$ is $\mu_H$-semistable (since the slope of any non-zero subsheaf of $S$ must be a rational number that is at most $\mu$, and has denominator belonging to the set $\{2,4,\ldots,2\rk(S)\}$). So $S$ is a maximal sheaf (according to Definition \ref{def:deficiency_maximality}). Theorem \ref{thm:full_structure_thm_maximal_sheaves_with_vanishing_h1} now shows that $S$ is a balanced twisted Steiner-like vector bundle, and $h^1(S)=0$. Thus, $h^0(Q)=0$, where $Q$ is the cokernel of the inclusion $S\hookrightarrow E$.
\end{proof}

Arguing as above, we can characterize semistable sheaves achieving the bound in Corollary \ref{cor:bound_sections_non_gen_gg}. For any such sheaf $E$, the image of the canonical evaluation map must have rank $\rk(E)-1$.

\begin{theorem}\label{thm:non_gen_gg_maximal_small_slope}
	Let $E$ be a $\mu_H$-semistable sheaf of rank $r\geq2$ on $\Quadric$ with $\mu:=\mu_H(E)\in[0,1)$. Suppose $E$ is not generically globally generated. Let $\mu'\in\left[0,\mu\right]$ be the largest rational number that can be written with denominator belonging to the set $\{2,4,\ldots,2(r-1)\}$, and suppose that $$h^0(E)=\beta_{r-1,\mu'}.$$ Then $E$ fits in an exact sequence of the form $$0\to S\to E\to Q\to 0,$$ where $S$ is a balanced twisted Steiner-like vector bundle of rank $r-1$ and slope $\mu'$, and $Q$ is a rank $1$ sheaf with no global sections.
\end{theorem}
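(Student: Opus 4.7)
The plan is to deduce this statement from the more general Theorem \ref{thm:non_gen_gg_maximal_small_slope_more_general} by first showing that the image of the canonical evaluation map has rank exactly $r-1$. So I would start by letting $S \subset E$ denote the image of $\OO_{\Quadric}\otimes H^0(E)\to E$, which is a globally generated torsion-free subsheaf of some rank $r'$. Because $E$ is not generically globally generated, $r' \leq r-1$. Let $\mu''\in[0,\mu]$ be the largest rational number with denominator in $\{2,4,\ldots,2r'\}$. Since $\{2,4,\ldots,2r'\}\subseteq\{2,4,\ldots,2(r-1)\}$, we have $\mu''\leq\mu'$.

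The key numerical step is to pin down $r'$ and $\mu''$. Theorem \ref{thm:stratified_bound} gives $h^0(E)\leq\beta_{r',\mu''}$, and Lemma \ref{lem:monotonicity_of_beta_1} then yields
\[
h^0(E)\leq\beta_{r',\mu''}\leq\beta_{r-1,\mu''}\leq\beta_{r-1,\mu'}.
\]
The hypothesis $h^0(E)=\beta_{r-1,\mu'}$ forces all three inequalities to be equalities. Since $\mu\in[0,1)$, we have $\alpha_{\mu'}=\alpha_{\mu''}=1$, so in particular $\beta_{r',\mu''}=r'(2\mu''+1)>0$. Strict monotonicity in parts (ii) and (iv) of Lemma \ref{lem:monotonicity_of_beta_1} then forces $r'=r-1$ and $\mu''=\mu'$.

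Once we know $\rk(S)=r-1$, the $\mu'$ in the hypothesis coincides with the $\mu'$ appearing in Theorem \ref{thm:non_gen_gg_maximal_small_slope_more_general}. Applying that theorem yields immediately that $S$ is a balanced twisted Steiner-like vector bundle of rank $r-1$ and slope $\mu'$, and that $Q:=E/S$ satisfies $h^0(Q)=0$; the rank of $Q$ is $r-(r-1)=1$. I do not expect any serious obstacle here: the entire argument is the numerical matching of ranks and slopes combined with the previously established more general theorem, and the strict monotonicity of $\beta$ is available precisely because $\mu''\geq 0$ keeps the relevant values positive.
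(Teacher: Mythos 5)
Your proposal is correct and follows essentially the same route as the paper, which proves this result by "arguing as above" (i.e., as in Theorem \ref{thm:non_gen_gg_maximal_small_slope_more_general}) together with the observation that the evaluation image must have rank $r-1$; your use of Theorem \ref{thm:stratified_bound} and the strict monotonicity in Lemma \ref{lem:monotonicity_of_beta_1} to force $\rk(S)=r-1$ and $\mu''=\mu'$ is exactly the intended argument. The only point left implicit is that $h^0(E)=\beta_{r-1,\mu'}=(r-1)(2\mu'+1)>0$, which is needed to apply Theorem \ref{thm:stratified_bound} and the general theorem, but this is immediate from $r\geq2$ and $\alpha_{\mu'}=1$.
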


We now show that the bound in Theorem \ref{thm:stratified_bound} is sharp.

\begin{proposition}\label{prop:non_gen_gg_maximal_sharpness}
	There exists a $\mu_H$-semistable sheaf $E$ on $\Quadric$ that is not generically globally generated, and that achieves the bound on $h^0(E)$ furnished by Theorem \ref{thm:stratified_bound}.
	
	In other words, there exists a $\mu_H$-semistable sheaf $E$ of rank $r\geq1$ on $\Quadric$ with $h^0(E)>0$ that is not generically globally generated, such that $h^0(E)=\beta_{\rk(S),\mu'}$, where $S\subset E$ is the image of the canonical evaluation map $\OO_{\Quadric}\otimes H^0(E)\to E$, and $\mu'\in\left[0,\mu_H(E)\right]$ is the largest rational number that can be written with denominator belonging to the set $\{2,4,\ldots,2\rk(S)\}$.
\end{proposition}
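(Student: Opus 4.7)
The plan is to construct an explicit example attaining the bound. The case $r=1$ is vacuous (a rank one torsion-free sheaf with a nonzero global section admits an injection $\OO \hookrightarrow E$ of full rank, hence is generically globally generated), so I would fix $r \geq 2$, pick a point $p \in \Quadric$, and take $E$ to be a non-split extension
$$0 \to \OO_{\Quadric}^{r-1} \to E \to I_p \to 0.$$
Here $\OO^{r-1}$ is the simplest balanced twisted Steiner-like bundle (the degenerate case $k=m=n=0$ in Definition \ref{def:Steiner-like_sheaf}), of rank $r-1$ and slope $0$; while $I_p$ is the ideal sheaf of the point, which is slope-zero and has no global sections.

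The first step is to produce the extension. From the ideal sequence $0 \to I_p \to \OO \to \OO_p \to 0$, the vanishing $H^1(\OO_{\Quadric})=0$, and Serre duality applied to $\omega_{\Quadric}=\OO(-2,-2)$ (trivial near the fat point $p$), one computes $\Ext^1(I_p,\OO)\cong \Ext^2(\OO_p,\OO)\cong \CC$. Consequently $\Ext^1(I_p,\OO^{r-1})\cong \CC^{r-1}$, which is nonzero for $r\geq 2$, so a non-split class exists.

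Next I would verify the required properties. For \emph{semistability}: both $\OO^{r-1}$ and $I_p$ are $\mu_H$-semistable of slope $0$, so any subsheaf $F\subset E$ sits in
$$0 \to F\cap \OO^{r-1} \to F \to \operatorname{im}(F\to I_p) \to 0,$$
where both outer terms have slope $\leq 0$, whence $\mu_H(F)\leq 0=\mu_H(E)$. For \emph{sections}: the long exact sequence combined with $h^0(I_p)=0$ gives $h^0(E)=h^0(\OO^{r-1})=r-1$, with every section factoring through $\OO^{r-1}$. Since $\OO^{r-1}$ is globally generated, the image $S$ of the evaluation map $\OO\otimes H^0(E)\to E$ is precisely the subsheaf $\OO^{r-1}\subset E$; thus $\rk S=r-1<r$, and $E$ is not generically globally generated.

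Finally I would check the numerical equality: with $\rk S=r-1$ and $\mu_H(E)=0$, the largest rational $\mu'\in[0,0]$ with denominator in $\{2,\ldots,2(r-1)\}$ is $\mu'=0$, so $\beta_{\rk S,\mu'}=\beta_{r-1,0}=r-1=h^0(E)$, confirming sharpness. The main technical step is the $\Ext$ computation; everything else is routine. Note that the semistability argument above does not actually require the full strength of Theorem \ref{thm-ss_ext_elliptic}, because sub and quotient have equal slopes. However, Theorem \ref{thm-ss_ext_elliptic} (applied to $\Quadric$ viewed as a del Pezzo surface with $-K=2H$, where a smooth anticanonical curve is an elliptic curve of bidegree $(2,2)$) could be invoked to produce a much larger family of sharp examples, e.g., by taking $S=\OO$ and $Q$ a general semistable sheaf of rank coprime to $c_1(Q)\cdot C$ with $\chi(Q)\ll 0$, slope slightly positive so that $\mu_H(E)<1/(2\rk S)$ forces $\mu'=0$; this would illustrate sharpness across a richer range of ranks and first Chern classes.
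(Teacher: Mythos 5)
Your construction is correct, and it is genuinely different from the paper's proof. You exhibit an explicit extension $0\to\OO^{r-1}\to E\to I_p\to 0$ (non-split, via $\Ext^1(I_p,\OO)\cong\Ext^2(\OO_p,\OO)\cong\CC$, which is computed correctly), and all the verifications go through: the sub and quotient are semistable of equal slope $0$, so any subsheaf $F\subset E$ has $c_1(F)\cdot H\leq 0$ and $E$ is $\mu_H$-semistable; $h^0(I_p)=0$ forces $H^0(\OO^{r-1})\xrightarrow{\sim}H^0(E)$, so the evaluation image is exactly $S=\OO^{r-1}$, the cokernel $I_p$ has rank $1$, and $h^0(E)=r-1=\beta_{r-1,0}$ with $\mu'=0$; your remark that $r=1$ cannot occur is also right. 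The paper instead invokes Theorem \ref{thm-ss_ext_elliptic}: it takes a \emph{general} extension $0\to S\to E\to Q\to 0$ with $S$ a balanced twisted Steiner-like bundle and $Q$ a general stable sheaf with $h^0(Q)=0$, $\chi(Q)\ll 0$, a strict slope inequality on the anticanonical elliptic curve, and coprimality hypotheses, so that semistability of $E$ comes from the del Pezzo extension theorem of \S\ref{sec:semistab_extn}. Your route is more elementary and self-contained --- because sub and quotient have equal slope, semistability is automatic and none of the machinery of \S\ref{sec:semistab_extn} is needed (indeed even the split sheaf $\OO^{r-1}\oplus I_p$ would suffice, so the non-splitness and the $\Ext$ computation are optional); what the paper's heavier approach buys, as you note at the end, is a much larger supply of sharp examples, with $\mu_H(S)$ strictly smaller than $\mu_H(E)$ and varying ranks and first Chern classes, which also showcases the extension theorem that is one of the paper's stated auxiliary results.
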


\begin{proof}
	Due to Theorem \ref{thm-ss_ext_elliptic}, a sheaf $E$ given by a general extension $$0\to S\to E\to Q\to 0$$ satisfies our requirements, where $S$ and $Q$ are general stable sheaves on $\Quadric$ satisfying all of the following properties:\\[-11pt] \begin{enumerate}
		\item $S$ is balanced twisted Steiner-like,\\[-5pt]
		\item $h^0(Q)=0$,\\[-5pt]
		\item $\mu_H(S)$ equals the largest rational number $\mu'\in\left[0,\mu\right]$ that can be written with denominator belonging to the set $\{2,4,\ldots,2\rk(S)\}$, where $$\mu:=\frac{(c_1(S)+c_1(Q))\cdot H}{2(\rk(S)+\rk(Q))},$$
		\item $\chi(Q)$ is \emph{negative enough},\\
		\item $\displaystyle\frac{c_1(S)\cdot(-K_{\Quadric})}{\rk(S)}<\frac{c_1(Q)\cdot(-K_{\Quadric})}{\rk(Q)}$,\\[0pt]
		\item $\rk(S)$ and $c_1(S)\cdot(-K_{\Quadric}))$ are coprime,\\[-5pt]
		\item $\rk(Q)$ and $c_1(Q)\cdot(-K_{\Quadric}))$ are coprime.\\[-5pt]
	\end{enumerate} Note that $E$ is not generically globally generated since $Q$ is not.
\end{proof}

\bibliography{mybib}{}
\bibliographystyle{amsalpha}

\end{document}